\newtheorem{theorem}{Theorem}[section]
\newaliascnt{lemma}{theorem}
\newtheorem{lemma}[lemma]{Lemma}
\newaliascnt{conjecture}{theorem}
\newaliascnt{proposition}{theorem}
\newtheorem{proposition}[proposition]{Proposition}
\newaliascnt{corollary}{theorem}
\newtheorem{corollary}[corollary]{Corollary}
\newaliascnt{problem}{theorem}
\newaliascnt{question}{theorem}
\newtheorem{question}[question]{Question}
\newaliascnt{claim}{theorem}
\newtheorem{claim}[claim]{Claim}
\theoremstyle{definition}
\newaliascnt{definition}{theorem}
\newtheorem{definition}[definition]{Definition}
\newaliascnt{example}{theorem}
\newtheorem{example}[example]{Example}
\newaliascnt{assumption}{theorem}
\newtheorem{assumption}[assumption]{Assumption}
\theoremstyle{remark}
\newaliascnt{remark}{theorem}
\newtheorem{remark}[remark]{Remark}
\newaliascnt{remarks}{theorem}
\numberwithin{equation}{section}
\numberwithin{figure}{section}
\def\wt{\widetilde}
\def\ol{\overline}
\def\lra{\longrightarrow}
\def\({$($}
\def\){$)$}
\def\bbp{\mathbb P}
\def\call{\mathcal L}
\def\Pic{\text{{\rm Pic\,}}}
\def\rank{\text{{\rm rank\,}}}
\def\Alb{{\rm Alb}}
\def\Alb{\mathrm{Alb}}
\begin{document}

	\title{Albanese fibrations of surfaces with low slope}	
	
	\author{Songbo Ling}
	
	\address{School of Mathematics, Shandong University, Jinan 250100, People's Republic of China}
	
	\email{lingsongbo@sdu.edu.cn}
	
	\author{Xin L\"u}
	
	\address{School of Mathematical Sciences,  Key Laboratory of MEA(Ministry of Education) \& Shanghai Key Laboratory of PMMP,  East China Normal University, Shanghai 200241, China}
	
	\email{xlv@math.ecnu.edu.cn}
	
	\thanks{This work is supported by National Natural Science Foundation of China,  Shanghai Pilot Program for Basic Research, Fundamental Research Funds for central Universities, Science and Technology Commission of Shanghai Municipality (No. 22DZ2229014) and  Natural Science Foundation of Shandong Province (No.  ZR2023QA00).}
	
	\subjclass[2010]{14J29; 14J10; 14D06}
	
	
	
	
	\keywords{surface of general type, Albanese fibration, slope}
	

	\begin{abstract}
		Let $S$ be a minimal irregular surface of general type, whose Albanese map induces a fibration $f:\,S \to C$ of genus $g$.
		We prove a linear upper bound on the genus $g$ if $K_S^2\leq 4\chi(\mathcal{O}_S)$, namely
		$$g\leq \left\{\begin{aligned}
			&6, &\quad&\text{if~}\chi(\mathcal{O}_S)=1;\\
			&3 \chi(\mathcal{O}_S)+1, && \text{otherwise}.
		\end{aligned}\right.$$
		Examples are constructed showing that the above linear upper bound is sharp.
		We also give a characterization of the Albanese fibrations reaching the above upper bound when $\chi(\mathcal{O}_S)\geq 5$.
		On the other hand, we will construct a sequence of surfaces $S_n$ of general type with $K_{S_n}^2/\chi(\mathcal{O}_{S_n})>4$ and with an Albanese fibration $f_n$, such that the genus $g_n$ of a general fiber of $f_n$ increases quadratically with $\chi(\mathcal{O}_{S_n})$,
		and that $K_{S_n}^2/\chi(\mathcal{O}_{S_n})$ can be arbitrarily close to $4$.
		%
	\end{abstract}
	
	\maketitle	
	
	\section{Introduction}\label{sec-intro}
	We work over the complex number $\mathbb{C}$ throughout this paper.
	Let $S$ be a minimal irregular surface of general type, and $a:\,S \to \Alb(S)$ be its Albanese map.
	When the image $a(S)$ is a curve, the Albanese map induces a fibration, which we call the Albanese fibration of $S$:
	$$f:\,S \longrightarrow C.$$
	In fact, by the universal property of the Albanese map, $C \cong a(S)$, and under this isomorphism the above fibration $f$ is nothing but the Albanese map of $S$.
	We are interested in the upper bound on the genus $g$ of a general fiber of $f$.
	\begin{question}
		Can we give an  upper bound on the genus $g$ of the Albanese fibration of $S$?
	\end{question}
	According to \cite{Cat00},
	the genus $g$ of the Albanese fibration of $S$ is a differential invariant, and hence is also a deformation invariant.
	Fixing $\chi(\mathcal{O}_S)$ and $K_S^2$, there are only finitely many deformation equivalence classes of such surfaces, cf. \cite[\S\,VII]{bhpv}.
	Hence there must be an upper bound of $g$ depending on $\chi(\mathcal{O}_S)$ or $K_S^2$. Such an argument is totally theoretic and gives no information how the upper bound of $g$ depends on $\chi(\mathcal{O}_S)$ or $K_S^2$.
	
	We are interested in the explicit upper bound on the genus $g$.
	If $g(C)>1$, then
	by the semi-positivity of the Hodge bundle $f_*\mathcal{O}_S\big(K_{S/C}\big)$,
	\begin{equation}\label{eqn-1-1}
		g\leq \frac{\chi(\mathcal{O}_S)}{g(C)-1}+1\leq \chi({\mathcal{O}_S})+1.
	\end{equation}
	The equality in \eqref{eqn-1-1} holds  if and only if $S$ is a generalized hyperelliptic surface, cf. \cite{Cat00}.
	If $g(C)=1$ and $K_S^2< 4\chi(\mathcal{O}_S)$, then from the slope inequality \cite{CH88,Xiao87} it follows that $\frac{K_S^2}{\chi(\mathcal{O}_S)}\geq \frac{4(g-1)}{g}$, and hence
	\begin{equation}\label{eqn-1-2}
		g\leq \frac{4\chi(\mathcal{O}_S)}{4\chi(\mathcal{O}_S)-K_S^2}\leq 4\chi(\mathcal{O}_S).
	\end{equation}
	Both of the above upper bounds are linear in $\chi(\mathcal{O}_S)$.
	It is natural to wonder how is the upper bound depending on $\chi(\mathcal{O}_S)$ when $K_S^2 \geq 4\chi(\mathcal{O}_S)$?
	Our first aim is to show that the slope $\frac{K_S^2}{\chi(\mathcal{O}_S)}\leq 4$ is a necessary and sufficient condition for the upper bound to be linear in $\chi(\mathcal{O}_S)$.
	\begin{theorem}\label{thm-main}
		Let $S$ be a minimal irregular surface of general type, and $f:\,S \to C$ be its Albanese fibration whose general fiber is of genus $g$.
		\begin{enumerate}[$(1)$]
			\item Suppose that $K_S^2 \leq 4\chi(\mathcal{O}_S)$. Then
			\begin{equation}\label{eqn-main}
				g\leq \left\{\begin{aligned}
					&6, &\quad&\text{if~}\chi(\mathcal{O}_S)=1;\\
					&3 \chi(\mathcal{O}_S)+1, && \text{otherwise}.
				\end{aligned}\right.
			\end{equation}
			Moreover, the above upper bound is sharp if $\chi(\mathcal{O}_S)\geq 2$. Namely, for any integer $\chi\geq 2$, there exists an Albanese fibration $f:\,S \to C$ whose general fiber is of genus $g$ such that\vspace{1mm}
			\begin{enumerate}[$($i$)$]
				\item ~$\chi(\mathcal{O}_S)=\chi$ and $K_{S}^2 \leq 4\chi(\mathcal{O}_{S})$;
				\item the genus $g=3 \chi(\mathcal{O}_{S})+1$. \vspace{3mm}
			\end{enumerate}
			\item For any $\lambda>4$, there exists a sequence of Albanese fibrations $f_n:\,S_n \to C_n$ such that\vspace{1mm}
			\begin{enumerate}[$(i)$]
				\item ~$\lim\limits_{n \to \infty}\chi(\mathcal{O}_{S_n})=+\infty$ \,and\, $\lim\limits_{n \to \infty}\frac{K_{S_n}^2}{\chi(\mathcal{O}_{S_n})}<\lambda $;\vspace{1mm}
				\item the genus $g_n$ increases quadratically with $\chi(\mathcal{O}_{S_n})$.
			\end{enumerate}
		\end{enumerate}
	\end{theorem}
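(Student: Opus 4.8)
The first reduction is to recall that, since $a(S)=C$ is a curve, $C\cong a(S)$ forces $g(C)=q(S)$, so the base genus is governed by the irregularity. When $g(C)\geq 2$, the semipositivity bound \eqref{eqn-1-1} already gives $g\leq \chi(\mathcal{O}_S)+1\leq 3\chi(\mathcal{O}_S)+1$ (and $g\leq 2\leq 6$ when $\chi(\mathcal{O}_S)=1$), so the entire difficulty of statement $(1)$ sits in the case $g(C)=1$, i.e. $C$ elliptic. There $\omega_{S/C}=\omega_S$, hence $K_S^2=K_{S/C}^2$ and $\chi(\mathcal{O}_S)=\deg f_*\omega_{S/C}=:\chi_f$, and the slope of $f$ equals $K_S^2/\chi(\mathcal{O}_S)$. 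The plan is to upgrade the naive consequence of the slope inequality: $K_{S/C}^2\ge \tfrac{4(g-1)}{g}\chi_f$ only yields $g\le 4\chi(\mathcal{O}_S)$ in the borderline range $K_S^2\in\{4\chi(\mathcal{O}_S)-1,\,4\chi(\mathcal{O}_S)\}$, which is too weak.

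I would first dispose of the non-hyperelliptic case. Refined slope inequalities that incorporate the Clifford index (or gonality) of the general fibre force the slope up: a general fibre of Clifford index $\ge 2$ satisfies $\lambda_f\ge \tfrac{6(g-1)}{g+1}$, and $\tfrac{6(g-1)}{g+1}\le 4$ caps $g\le 5$; the few non-hyperelliptic fibres of lower Clifford index (trigonal, plane quintic) are controlled by their own sharper bounds in the same manner. Since $3\chi(\mathcal{O}_S)+1\ge 7$ for $\chi(\mathcal{O}_S)\ge 2$ and $6\ge 5$ for $\chi(\mathcal{O}_S)=1$, this settles all cases with non-hyperelliptic general fibre.

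The genuine case is thus a hyperelliptic fibration over an elliptic curve of slope $\le 4$. I would analyze it through the canonical resolution of the relative double cover $S\dashrightarrow W\to C$, with $W=\mathbb{P}(\mathcal{E})\to C$ a $\mathbb{P}^1$-bundle and branch $B\in|2L|$ meeting each fibre $\Gamma$ in $2g+2$ points. The double-cover formulas give $\chi(\mathcal{O}_S)=\tfrac12 L(L+K_W)-\sum_p\epsilon_p$ and $K_S^2=2(L+K_W)^2-\sum_p\eta_p$, with $\epsilon_p,\eta_p\ge 0$ the contributions of the singular points $p$ of $B$. In the smooth case the slope is exactly $\tfrac{4(g-1)}{g}$ and $\chi(\mathcal{O}_S)$ is of order $g$; it is precisely the singular branch points that lower $\chi(\mathcal{O}_S)$ relative to $g$ while raising the slope toward $4$. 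The heart of the argument is to bound the cumulative effect of the $\epsilon_p$ under slope $\le 4$ and deduce $\chi(\mathcal{O}_S)\ge \tfrac{g-1}{3}$, that is $g\le 3\chi(\mathcal{O}_S)+1$; weighing the genus produced by the horizontal branch against the drop in $\chi(\mathcal{O}_S)$ the slope bound permits, and isolating the extremal configuration, is the step I expect to be the main obstacle. The residual case $\chi(\mathcal{O}_S)=1$, where only $g\le 6$ is asserted, I would finish by running through the finitely many numerical types.

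For sharpness in $(1)$ and the examples in $(2)$ I would work inside this same family of double covers $\pi\colon S\to W=\mathbb{P}(\mathcal{E})\to C$ over an elliptic curve, with $B\in|2L|$ carrying prescribed singularities. Imposing $h^1(W,-L)=0$ yields $q(S)=q(W)=1$, so $f\colon S\to C$ is automatically the Albanese fibration, and $B\cdot\Gamma=2g+2$ fixes the fibre genus. For $(1)$ one tunes $L$ together with a suitable collection of singular branch points so that $\chi(\mathcal{O}_S)=\chi$, $K_S^2\le 4\chi$ and $g=3\chi+1$, matching the extremal configuration found in the upper bound. For $(2)$ the decisive point is that singular branch points reduce $\chi(\mathcal{O}_S)$ below its naive order-$g$ value: by introducing branch singularities of growing multiplicity one can drive $g$ up quadratically while $\chi(\mathcal{O}_S)$ grows only like $\sqrt{g}$, keeping the slope $K_S^2/\chi(\mathcal{O}_S)$ just above $4$ and tending to a limit that can be taken arbitrarily close to $4$. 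Arranging all of $q(S)=1$, the strict inequalities $4<K_{S_n}^2/\chi(\mathcal{O}_{S_n})<\lambda$, and the precise quadratic relation $g_n\sim\chi(\mathcal{O}_{S_n})^2$ simultaneously—while allowing $B$ only the singularities for which the resolution formulas are valid—is the part of $(2)$ I expect to be most delicate.
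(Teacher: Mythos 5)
Your reduction to the case $g(C)=1$ is correct and matches the paper, and your idea for part (2) (double covers of $\Pbb^1$-bundles over an elliptic curve with branch divisors acquiring singularities of growing multiplicity, so that $g$ grows quadratically in $\chi$) is essentially the paper's Example\,4.3. But the way you dispose of the non-hyperelliptic case contains a fatal error. The inequality $\lambda_f\geq \frac{6(g-1)}{g+1}$ for general fibres of Clifford index $\geq 2$ is false without excluding double cover fibrations: bielliptic fibrations (double covers of elliptic fibrations) have general fibre of gonality $4$, hence Clifford index $2$ for large $g$, yet can have slope exactly $4$ with $g$ arbitrarily large. Indeed the paper's own sharp examples (Example\,4.2) are non-hyperelliptic bielliptic Albanese fibrations with $K_S^2=4\chi(\mathcal{O}_S)$ and $g=3\chi(\mathcal{O}_S)+1$ unbounded, directly contradicting your claimed cap $g\leq 5$. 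All refined slope inequalities of Barja--Stoppino/Lu--Zuo type carry an exception precisely for double cover fibrations, and that exceptional locus is exactly where the extremal surfaces live, so it cannot be waved away.

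As a consequence your picture of the problem is inverted. The hyperelliptic case is the \emph{easy} one: it satisfies $g\leq 2\chi(\mathcal{O}_S)+2$, strictly below $3\chi(\mathcal{O}_S)+1$ for $\chi\geq 2$, so your proposed hyperelliptic double covers of $\Pbb^1$-bundles can never realize the sharpness in part (1). The actual content of the theorem is the non-hyperelliptic case over an elliptic base, which the paper splits in two: (a) if $f$ is bielliptic (which is forced once $g\geq 16$ by Lu--Zuo), Barja's structure theorem for slope-$4$ bielliptic fibrations exhibits $S_{can}$ as a flat double cover of a bielliptic surface $Y$, and the two-divisibility of the branch class in $\mathrm{Num}(Y)$, read off from Serrano's table, yields $g=\frac{2\chi(\mathcal{O}_S)}{b}+1$ with $b\geq \frac23$, hence $g\leq 3\chi(\mathcal{O}_S)+1$; the sharp examples are double covers of the type-6 bielliptic surface $(A\times B)/(\Zbb_3\times\Zbb_3)$; (b) if $f$ is not bielliptic then $g\leq 15$, so $\chi(\mathcal{O}_S)\leq 4$ may be assumed, and a finite case-by-case analysis of the Harder--Narasimhan filtration of $f_*\omega_{S/E}$ via Xiao's method and the second multiplication map closes the argument. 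Neither ingredient appears in your proposal, and the step you flag as "the main obstacle" in the hyperelliptic analysis is not where the difficulty of the theorem lies.
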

	
	Of course, the slope $\frac{K_{S_n}^2}{\chi(\mathcal{O}_{S_n})}$ of the surfaces $S_n$ in (2) above is strictly larger than $4$ in view of \eqref{eqn-main}.
	It is natural to ask the geometry of the Albanese fibrations reaching the upper bound in \eqref{eqn-main}, or can we classify such Albanese fibrations?
	\begin{theorem}\label{thm-main-2}
		Let $S$ be a minimal irregular surface of general type with $K_S^2 \leq 4\chi(\mathcal{O}_S)$, and $f:\,S \to C$ be its Albanese fibration whose general fiber is of genus $g$.
		Suppose that $\chi(\mathcal{O}_S)\geq 5$ and that the equality holds in \eqref{eqn-main}. Then
		\begin{enumerate}[$(i)$]
			\item $K_S^2 = 4\chi(\mathcal{O}_S)$;
			\item the irregularity $q(S)=1$, and hence the base $C=E$ is an elliptic curve;
			\item the general fiber of $f$ is non-hyperelliptic;
			\item the canonical model $S_{can}$ is a flat double cover of a bielliptic surface $Y$ with the following commutative diagram:
			$$\xymatrix{ S \ar[drr]_-{f} \ar[rr] && S_{can} \ar[rr]^-{\pi} && Y \ar[dll]^-{h}\\
				&&E&&}$$
			\item the bielliptic surface $Y$ is the surface of type 6 appearing in Table\,2.1 in \autoref{sec-bielliptic};
			\item the branch divisor $R$ of $\pi$ admits at most negligible singularities.
		\end{enumerate}
	\end{theorem}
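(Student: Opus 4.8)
The plan is to exploit the equality $g=3\chi(\mathcal O_S)+1$ to force, step by step, an increasingly rigid structure, in the logical order (ii)$\to$(i)$\to$(iii),(iv)$\to$(v),(vi).

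\textbf{Step 1 (the base is elliptic).} I would first record that for the Albanese fibration every global $1$-form on $S$ is pulled back from $C$: the map $f^*\colon H^0(C,\omega_C)\hookrightarrow H^0(S,\Omega^1_S)$ is injective, while $a(S)=C$ generates $\Alb(S)$, so $\Jac(C)\twoheadrightarrow\Alb(S)$; hence $q(S)=g(C)=:b$. If $b\geq 2$, then \eqref{eqn-1-1} gives $g\leq\chi(\mathcal O_S)+1$, which contradicts $g=3\chi(\mathcal O_S)+1$ once $\chi(\mathcal O_S)\geq1$; and $b=0$ is impossible since $S$ is irregular. Thus $b=1$ and $C=E$ is elliptic, which is (ii).

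\textbf{Step 2 ($K_S^2=4\chi(\mathcal O_S)$).} With $b=1$ one has $K_{S/E}^2=K_S^2$ and $\deg f_*\omega_{S/E}=\chi(\mathcal O_S)$, so the relative slope is $K_S^2/\chi(\mathcal O_S)\leq 4$. The bare slope inequality $K_{S/E}^2\geq\frac{4(g-1)}{g}\deg f_*\omega_{S/E}$ does not by itself pin down $K_S^2$, so I would instead invoke the sharper bound underlying \eqref{eqn-main} in \autoref{thm-main}: that bound expresses $g$ in terms of $K_S^2$ and $\chi(\mathcal O_S)$ and becomes $3\chi(\mathcal O_S)+1$ only at $K_S^2=4\chi(\mathcal O_S)$. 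Tracing its equality locus therefore forces $K_S^2=4\chi(\mathcal O_S)$, i.e. relative slope exactly $4$, giving (i). I expect this to be the delicate analytic input, since everything downstream rests on being on the slope-$4$ locus.

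\textbf{Step 3 (the double cover structure; the main obstacle).} On the slope-$4$ locus I would run Xiao's method: the Harder--Narasimhan filtration of $f_*\omega_{S/E}$ must be extremal, and the equality case rigidifies the relative canonical (and bicanonical) map so that the image of $S_{can}$ is a surface $Y$ fibred by $h\colon Y\to E$ over which $S_{can}$ is generically $2{:}1$; rigidity then upgrades this to a genuine finite (hence flat) double cover $\pi\colon S_{can}\to Y$ with $f=h\circ\pi$. Writing $R\in|2L|$ for the branch divisor and $K_{S_{can}}=\pi^*(K_Y+L)$, the identities $K_{S_{can}}^2=2(K_Y+L)^2$ and $\chi(\mathcal O_{S})=2\chi(\mathcal O_Y)+\tfrac12 L\cdot(L+K_Y)$, read against $K_S^2=4\chi(\mathcal O_S)$ and $q(S)=1$, force $K_Y\equiv 0$, $\chi(\mathcal O_Y)=0$, $q(Y)=1$; so $Y$ is bielliptic and $h$ its elliptic pencil, which is (iv). Transforming the numerical slope-$4$ equality into this honest geometric $2{:}1$ statement is the principal difficulty. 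For (iii): each fibre $F$ of $f$ is the double cover of an elliptic fibre $F_0$ of $h$ branched along $R\cap F_0$, hence a bielliptic curve of genus $g=L\cdot F_0+1=3\chi(\mathcal O_S)+1\geq 16$; since a curve of genus $\geq 4$ cannot be simultaneously bielliptic and hyperelliptic, the general fibre is non-hyperelliptic.

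\textbf{Step 4 (identification of $Y$ and of $R$).} From $\chi(\mathcal O_S)=\tfrac12 L^2$ and the genus computation I have $L^2=2\chi(\mathcal O_S)$ and $L\cdot F_0=3\chi(\mathcal O_S)$ for the fibre class $F_0$ of $h$. I would test these two constraints, together with the existence of an effective $R\in|2L|$ yielding a minimal surface of general type, against the rank-$2$ Néron--Severi lattice of each of the seven Bagnera--De Franchis types in Table~2.1 of \autoref{sec-bielliptic}; the fractional appearance of $F_0$ forced by $L^2/2=\chi=\tfrac13(L\cdot F_0)$ reflects the multiplicities of the multiple fibres of $h$, and only one type carries such an $L$, eliminating the rest and giving (v). Finally, because $S_{can}$ is the canonical model it has at worst rational double points, which for a flat double cover is exactly the condition that $R$ have at most negligible singularities, proving (vi). The lattice bookkeeping here is routine once Step 3 is in place.
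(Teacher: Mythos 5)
Your Steps 1 and 4 match the paper: (ii) comes from \eqref{eqn-1-1} exactly as you say, and your lattice computation in Step 4 ($b=2/3$, forcing $\beta=3$ and hence type 6, plus negligible singularities from the RDP condition on $S_{can}$) is the same as the paper's \autoref{rem-bielliptic}. The genuine gap is in Steps 2--3, and it is a circularity: you derive (iii) (non-hyperellipticity) \emph{from} the double cover structure (iv), but the double cover structure cannot be extracted from the slope-$4$ equality alone. Hyperelliptic fibrations over an elliptic base with $K_S^2=4\chi(\mathcal{O}_S)$ exist in abundance (they are double covers of $\mathbb{P}^1$-bundles over $E$, with genus as large as $2\chi(\mathcal{O}_S)+2$ by Ishida's theorem), and for them the relative canonical image is a ruled surface, not a bielliptic one. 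Your numerical check does not save this: $K_{S_{can}}^2=2(K_Y+L)^2$ and $\chi(\mathcal{O}_S)=\tfrac12 L\cdot(L+K_Y)$ together with $K_S^2=4\chi(\mathcal{O}_S)$ only give $(K_Y+L)\cdot K_Y=0$, which does not force $K_Y\equiv 0$. So non-hyperellipticity must be established \emph{before} the structure theorem, not deduced from it. The paper does this at the outset via \autoref{lem-reduction}\,(2): a hyperelliptic Albanese fibration with $K_S^2\le 4\chi(\mathcal{O}_S)$ has $g\le 2\chi(\mathcal{O}_S)+2<3\chi(\mathcal{O}_S)+1$ once $\chi(\mathcal{O}_S)\ge 2$, so equality in \eqref{eqn-main} already rules out the hyperelliptic case.

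Relatedly, you never identify where the hypothesis $\chi(\mathcal{O}_S)\ge 5$ enters. It is not there to make the fibres bielliptic curves of genus $\ge 4$; it is there because $g=3\chi(\mathcal{O}_S)+1\ge 16$ is precisely the threshold in \cite[Theorem\,1.5]{lu-zuo-18}, which is the actual engine behind (i), (iv) and (vi): a non-hyperelliptic fibration of genus $g\ge 16$ has $\lambda_f\ge 4$, with equality if and only if the canonical model is a flat bielliptic double cover with negligible branch singularities. This single citation simultaneously upgrades $K_S^2\le 4\chi(\mathcal{O}_S)$ to an equality (your Step 2, which as written is vague --- the bound \eqref{eqn-main} does not ``express $g$ in terms of $K_S^2$'', and the slope-inequality bound \eqref{eqn-1-2} still permits $K_S^2=4\chi(\mathcal{O}_S)-1$ with $g=3\chi(\mathcal{O}_S)+1$) and produces the double cover of Step 3. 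Without this input, or an equivalent rigidity statement proved with the genus and non-hyperellipticity hypotheses in place, your Steps 2 and 3 do not close.
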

	\begin{theorem}\label{thm-main-3}
		For any fixed integer $\chi\geq 5$, let $\mathcal{M}=\mathcal{M}_{\chi}$ be the moduli space of minimal irregular surfaces $S$ of general type with $\chi(\mathcal{O}_S)=\chi$ reaching the equality in \eqref{eqn-main}.
		Then $\mathcal{M}$ is irreducible of dimension equal to $4\chi$.
	\end{theorem}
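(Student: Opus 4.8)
The plan is to parametrize $\mathcal{M}$ through the double-cover description supplied by the structure theorem. By \autoref{thm-main-2}, for $\chi\geq 5$ every minimal $S$ attaining equality in \eqref{eqn-main} has canonical model $S_{can}$ realized as a flat double cover $\pi:\,S_{can}\to Y$ of a bielliptic surface $Y$ of the fixed type~6 of \autoref{sec-bielliptic}, branched along a divisor $R$ with at most negligible singularities and compatibly with the Albanese fibration over the elliptic base $E$. Writing $R\in|2L|$ for the line bundle $L$ with $\mathcal{O}_Y(R)\equiv 2L$, I would introduce the incidence variety $\mathcal{I}$ of triples $(Y,L,R)$, where $Y$ ranges over the moduli of type-6 bielliptic surfaces, $L$ over the line bundles in the relevant numerical class, and $R\in|2L|$ has at most negligible singularities. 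The statement then splits into: (i) computing $\dim\mathcal{I}$; (ii) proving $\mathcal{I}$ irreducible; and (iii) identifying the fibre dimension of the natural dominant map $\mathcal{I}\to\mathcal{M}$, which records the ambiguity in reconstructing $(Y,L,R)$ from $S$.

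First I would pin down $L$. Since $\chi(\mathcal{O}_Y)=0$ and $K_Y$ is numerically trivial on a bielliptic surface, the standard double-cover formulas $\chi(\mathcal{O}_{S_{can}})=2\chi(\mathcal{O}_Y)+\tfrac12 L(L+K_Y)$ and $K_{S_{can}}^2=2(K_Y+L)^2$ give $\chi=\tfrac12 L^2$ and $K_{S_{can}}^2=2L^2$, whence $L^2=2\chi$ and $K_{S_{can}}^2=4\chi$, in agreement with part~(i) of \autoref{thm-main-2}; in particular the numerical class of $L$ is determined. Riemann--Roch on $Y$ then yields $\chi(Y,2L)=\tfrac12(2L)(2L-K_Y)=2L^2=4\chi$, and since $2L$ is nef and big, Kawamata--Viehweg (or Mumford) vanishing gives $h^1(Y,2L)=h^2(Y,2L)=0$, so that $h^0(Y,2L)=4\chi$ and $\dim|2L|=4\chi-1$.

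For the base of $\mathcal{I}$ I would use that a type-6 bielliptic surface $Y=(A\times B)/G$ has the elliptic curve $B$ rigid, forced to have $j$-invariant $0$ by the order-three action of $G$, while $A$ varies in a one-parameter irreducible family; hence the moduli of such $Y$ is irreducible of dimension $1$. Moreover $q(Y)=1$, so the line bundles $L$ in the fixed numerical class form a single connected torsor over $\mathrm{Pic}^{0}(Y)$ of dimension $1$. Thus $\mathcal{I}$ is a projective bundle with fibre $|2L|$ over an irreducible base of dimension $1+1=2$, giving $\dim\mathcal{I}=2+(4\chi-1)=4\chi+1$ together with the irreducibility of $\mathcal{I}$; the locus where $R$ has at most negligible singularities is a dense open subset and affects neither conclusion. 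Finally the deck involution identifies $Y=S_{can}/\langle\iota\rangle$, so the reconstruction of $(Y,L,R)$ from $S$ is ambiguous exactly by $\Aut(Y)$ acting on triples; as only the vector field from the $A$-factor descends, $\dim\Aut(Y)=\dim H^0(Y,T_Y)=1$, the generic fibre of $\mathcal{I}\to\mathcal{M}$ has dimension $1$, and therefore
\[
\dim\mathcal{M}=\dim\mathcal{I}-\dim\Aut(Y)=(4\chi+1)-1=4\chi ,
\]
with $\mathcal{M}$ irreducible as the dominant image of the irreducible $\mathcal{I}$.

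The hard part will be step~(iii): showing that $\mathcal{I}\to\mathcal{M}$ is well defined with generic fibres of dimension exactly $\dim\Aut(Y)$. This demands that the presentation of \autoref{thm-main-2} be genuinely canonical and reversible, so that $Y$, the involution $\iota$, and $L$ are recovered from $S$ up to only the automorphisms of $Y$ and the finitely many square-root choices of $L$ differing by elements of $\mathrm{Pic}^{0}(Y)[2]$. One must verify that these finite square-root ambiguities do not disconnect the parameter space --- which they do not, precisely because $\mathcal{I}$ is parametrized by $L$ itself over the connected torsor rather than by $2L$ --- and that no extra automorphisms of $S$ enlarge the fibres beyond $\Aut(Y)$. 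Establishing the uniqueness of the bielliptic quotient, for instance that $\iota$ is the unique involution compatible with the Albanese fibration, is the delicate point; once it is secured, the dimension count and the irreducibility follow as above.
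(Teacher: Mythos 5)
Your proposal is correct in substance and follows the same overall strategy as the paper: reduce via \autoref{thm-main-2} to the double-cover data $(Y,R)$ on a type-6 bielliptic surface and count parameters, using $\chi(\mathcal{O}_Y)=0$, $K_Y\sim 0$ and vanishing to get $\dim|2L|=4\chi-1$, plus one modulus for $Y$ (only $A$ moves; $B$ is rigid). The difference is in the bookkeeping of the quotient. The paper fixes the linear equivalence class of $R$ and never quotients by automorphisms, obtaining $1+(4\chi-1)=4\chi$; you add one parameter for the square root $L$ ranging over a $\mathrm{Pic}^0(Y)$-torsor and then subtract $\dim\mathrm{Aut}^0(Y)=1$. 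These counts agree for a non-trivial reason that neither argument states: since $R$ is ample, translation by $\mathrm{Aut}^0(Y)$ moves the linear equivalence class of $R$ within its numerical class with finite stabilizers (the induced homomorphism to $\mathrm{Pic}^0(Y)$ is an isogeny), so fixing the class of $R$ is equivalent, up to finite data, to quotienting by $\mathrm{Aut}^0(Y)$; your version makes this point visible where the paper elides it. The step you flag as delicate --- that $S$ determines $(Y,\iota,L)$ up to the expected ambiguities, so the fibres of $\mathcal{I}\to\mathcal{M}$ have dimension exactly $\dim\mathrm{Aut}(Y)$ --- is likewise not addressed in the paper, but it closes easily here: since $g=3\chi+1\geq 16$, the Castelnuovo--Severi inequality forces the bielliptic involution on a general fibre of $f$ to be unique, hence the involution on $S$ and the quotient $Y$ are canonically determined. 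Finally, your remark that the negligible-singularity locus is a dense open subset of $|2L|$ still needs nonemptiness, which is exactly what the paper supplies by citing the Reider-type argument of \autoref{exam-1}; with these two inputs added, your dimension count and irreducibility argument are complete and agree with the paper's.
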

	
	\begin{remark}
		(1).~ Let $f:\, S \to C$ be a hyperelliptic Albanese fibration of genus $g$ with $K_S^2 \leq 4\chi(\mathcal{O}_S)$.
		According to the characterization in \autoref{thm-main-2}, there should be a better upper bound on $g$.
		In fact, in \cite[Theorem\,3.1]{ll24} it is proved that $g\leq \frac{4\chi(\mathcal{O}_S)+4}{2+\big(4\chi(\mathcal{O}_S)-K_S^2\big)} \leq 2\chi(\mathcal{O}_S)+2$;
		see also \cite[Theorem\,0.1]{Ish05} for the case when $K_S^2 = 4\chi(\mathcal{O}_S)$.
		
		(2). If the fibration $f:\,S \to C$ is not the Albanese fibration of $S$, then there is no upper bound of the genus $g$ depending on $\chi(\mathcal{O}_S)$ in general. Indeed, Penegini-Polizzi (\cite{PP17}) constructed a minimal surface $S$ with $p_g(S)=q(S)=2,K_S^2=4$ (whose Albanese map is generically finite), on which there are fibrations $f_k:\,S \to C$, such that the genera of $f_k$'s can be arbitrarily large.
		
		(3). Professor Z. Jiang kindly informed us that in a preprint \cite{jl24} joint with Lin they can also obtain some explicit upper bounds on the genus of the Albanese fibration when $p_g(S)=q(S)=1$ and $K_S^2=4$ or $5$.
	\end{remark}
	
	Let us briefly explain the main idea of the proofs.
	It consists of two main parts.
	When $g$ is large, it follows from the slope of non-hyperelliptic fibrations \cite{lu-zuo-18}
	that the Albanese fibration $f$ should be bielliptic, i.e., it is a double cover of another fibration $h:\,Y \to C$ whose general fiber is an elliptic curve.
	Together with the characterization of bielliptic fibrations with the minimal slope $4$ in \cite{barja01}, we can then obtain the bound \eqref{eqn-main} using the standard technique of the double covers between algebraic surfaces.
	
	On the other hand, if $g$ is small, it is not clear whether there is a double cover structure or not.
	However, as $g$ is small, we can assume $\chi(\mathcal{O}_S)$ is also small.
	In other words, such surfaces form finitely many bounded families.
	In particular, there are at most finitely many possibilities for the Harder-Narasimhan filtration of the Hodge bundle $f_*\omega_{S/C}$ associated to such an Albanese fibration $f:\,S \to C$.
	We then apply a case-by-case study with the help of Xiao's technique \cite{Xiao87} and the second multiplication map developed in \cite{lu-zuo-18} to finish the proof.
	
	To end the introduction, we would like to raise some related questions.
	\begin{question}
		What is the sharp upper bound on $g$ when $K_S^2\leq 4\chi(\mathcal{O}_S)$ and $\chi(\mathcal{O}_S)=1$?
	\end{question}
	By \eqref{eqn-main}, $g\leq 6$ in this case. We construct an example with $g=4$, see \autoref{exam-1}.
	However, we have no example with $g=5$ or $6$.
	On the other hand,
	It is proved that $g\leq 3$ if the $S$ is the standard isotrivial fibration (cf. \cite{cc91,penegini11,Pol09});
	it is also proved that $g\leq 4$ if either the Albanese fibration $f:\, S \to C$ is hyperelliptic (cf. \cite{Ish05,ll24}), or $\rank \mathcal{U}>0$ (cf. \autoref{coro-3-1}), where $\mathcal{U}$ is the unitary summand in the Fujita decomposition \eqref{eqn-fujita} of $f_*\omega_{S/C}$.
	
	\begin{question}
		Can we classify the Albanese fibrations with the equality in \eqref{eqn-main} when $\chi(\mathcal{O}_S)\leq 4$?
		We even don't know whether such Albanese fibrations are bielliptic or not.
	\end{question}
	
	\begin{question}
		How is the upper bound of $g$ depending on $\chi(\mathcal{O}_S)$ if $K_S^2 > 4\chi(\mathcal{O}_S)$?
	\end{question}
	The examples in \autoref{thm-main}\,(2) show that such an upper bound increases at least quadratically in $\chi(\mathcal{O}_S)$.
	In \cite{ll24} we have proved that the upper bound is indeed a quadratic function of $\chi(\mathcal{O}_S)$ if the Albanese fibration is hyperelliptic.
	Remark also that there is a linear upper bound if $g(C)=q(S)\geq 2$; see \eqref{eqn-1-1}.
	
	\vspace{2ex}
	The paper is organized as follows.
	In \autoref{sec-preliminaries}, we mainly review some basic facts about surface fibrations and do some technical preparations.
	In particular, the Xiao's technique and the second multiplication map are recalled.
	The upper bound \eqref{eqn-main} will be proved in \autoref{sec-proof-main}.
	In \autoref{sec-exam}, we construct examples showing that the linear upper bound in \eqref{eqn-main} is sharp, and also showing that the linear upper bound will be no longer true when $K_{S}^2>4\chi(\mathcal{O}_S)$.
	This completes the proof of \autoref{thm-main}.
	Finally in \autoref{sec-reaching} we study the Albanese fibrations reaching the upper bound \eqref{eqn-main}. In particular, \autoref{thm-main-2} and \autoref{thm-main-3} will be proved.
	

	\section{Preliminaries} \label{sec-preliminaries}
	In this section, we mainly review some basic facts and fix the notations.
	In \autoref{sec-fibration}, we recall some general facts about the surface fibrations, and refer to \cite{bhpv} for more details.
	In \autoref{sec-bielliptic} we restrict ourselves to the theory on the bielliptic fibrations; while in \autoref{sec-xiao-method}, we recall Xiao's technique and the second multiplication map, both of which are  crucial to \autoref{thm-main}.
	
	\subsection{The surface fibrations}\label{sec-fibration}
	Let $f:S\rightarrow C$ be a fibration of curves of genus $g\geq 2$, i.e. $f$ is a proper morphism from a smooth projective surface $S$ onto a smooth projective curve $C$ with connected fibers over the complex number, and the general fiber is a smooth projective curve of genus $g$. The fibration $f$ is called {\em relatively minimal} if there is no $(-1)$-curve (i.e. a smooth rational curve with self-intersection $-1$) contained in the fibers of $f$. It is called {\em hyperelliptic} if its general fiber   is hyperelliptic, {\em smooth} if all its fibers are smooth, {\em isotrivial} if all its smooth fibers are isomorphic to each other, and {\em locally trivial} if it is both smooth and isotrivial.
	
	Let $\omega_S$ (resp. $K_S$) be the canonical sheaf (resp. canonical divisor) of $S$, and let $\omega_f=\omega_{S/C}=\omega_S\otimes f^*\omega_C^\vee$ (resp. $K_f=K_{S/C}=K_S-f^*K_C$) be the relative canonical sheaf (resp. the relative canonical divisor) of $f$. Put $p_g(S):=h^0(S,\omega_S)$, $q(S):=h^1(S,\omega_S)$, $\chi(\mathcal{O}_S):=p_g(S)-q(S)+1$, and let $\chi_{top}(S)$ be the topological Euler characteristic of $S$. The basic invariants of
	$f$ are:
	$$\begin{aligned}
		\chi_f&\,=\chi(\mathcal{O}_S)-(g-1)(g(C)-1);\\
		K_f^2&\,=K_S^2-8(g-1)(g(C)-1);\\
		e_f&\,=\chi_{top}(S)-4(g-1)(g(C)-1).
	\end{aligned}$$
	These invariants satisfy the following properties:
	\begin{enumerate}
		\item $\chi_f=\deg f_*\omega_{S/C}$ is the degree of the Hodge bundle $f_*\omega_{S/C}$. Moreover $\chi_f\geq 0$, and the equality holds if and only if $f$ is locally trivial.
		\item When $f$ is relatively minimal, $K_f^2\geq 0$, and the equality holds if and only if $f$ is locally trivial.
		\item $e_f=\sum e_F$, where $e_F:=\chi_{\rm top}(F_{red})-(2-2g)$ for any fiber $F$ and $F_{red}$ is the reduced part of $F$. Moreover, $e_F\geq 0$, and the equality holds if and only if $F$ is smooth. Hence $e_f\geq 0$, and $e_f=0$ iff $f$ is smooth.
		\item The above three invariants satisfy the Noether equality: $12\chi_f=K_f^2+e_f$.
	\end{enumerate}
	
	Suppose that $f$ is relatively minimal and not locally trivial. Then both $K_f^2$ and $\chi_f$ are strictly positive. In this case, the {\em slope} of $f$ is defined to be $\lambda_f=\frac{K_f^2}{\chi_f}$.
	According to the non-negativity of these basic invariants of $f$,
	it holds $0<\lambda_f\leq 12$.
	The so-called slope inequality, proved independently by Cornalba-Harris \cite{CH88} and Xiao \cite{Xiao87}, states that
	\begin{equation}\label{eqn-slope}
		\lambda_f\geq \frac{4(g-1)}{g}.
	\end{equation}
	
	\subsection{Bielliptic surfaces}\label{sec-bielliptic}
	In this subsection, we recall some facts about bielliptic surfaces, and refer to \cite{bhpv,bea78,Ser90} for more details.
	Let $Y$ be a bielliptic surface.
	The Kodaira dimension of $Y$ is zero, and the invariants of $Y$ are as follows.
	$$K_Y\sim 0,\quad p_g(Y)=0,\quad q(Y)=1,\quad \chi(\mathcal{O}_S)=0,\quad h^{1,1}(Y)=2.$$
	Here ``$\sim$'' means ``numerically equivalent''.
	Moreover, there exist two elliptic curves $A$ and $B$, and an abelian group $G$ acting faithfully on both $A$ and $B$ such that:
	\begin{enumerate}
		\item the quotient $A/G\cong E$ is still an elliptic curve, and $B/G\cong \bbp^1$;
		\item the bielliptic surface $Y\cong (A \times B)/G$, where $G$ acts diagonally on the product $A \times B$.
	\end{enumerate}
	Let
	$$h=h_1:\,Y \lra E\cong A/G,\quad\text{and}\quad h_2:\,Y \lra \bbp^1\cong B/G,$$
	be the two natural projections.
	Then the fibers of $h_1$ are all smooth, and hence the elliptic fibration $h_1$ is locally trivial.
	However, the fibration $h_2$ admits singular fibers.
	Let $\Gamma_p$ be a singular fiber of $h_2$ over $p\in \bbp^1\cong B/G$.
	Then $p$ is a branch point of the finite map $B \to \bbp^1\cong B/G$,
	$\Gamma_p$ is a multiple of a smooth elliptic curve, and the multiplicity is equal to the ramification index of the finite map $B \to \bbp^1\cong B/G$ over $p$.
	In fact, all the smooth fibers of $h_1$ (resp. $h_2$) are isomorphic to $B$ (resp. $A$).
	By abuse of notations, denote also by $B, A$ the classes \big(in Num($Y$), $H^2(Y,\mathbb{Z})$, or $H^2(Y,\mathbb{Q})$\big) of the fibers of $h=h_1$ and $h_2$ respectively.
	Let $\gamma$ be the order of the group $G$. Then
	$$A^2=B^2=0,\qquad A\cdot B=\gamma.$$
	Clearly $\{A,B\}$ forms a basis of $H^2(Y,\mathbb{Q})$.
	However, the basis of Num($Y$) is a little subtle.
	It depends on the action of $G$, as exhibited by Serrano \cite[Theorem\,1.4]{Ser90} in the following table:\vspace{3mm}
	\begin{center}
		\begin{tabular}{|c|c|c|c|}
			\multicolumn{4}{c}{\bf Table 2.1}\\
			\multicolumn{4}{c}{}\\\hline
			\quad Type\quad\,  &\qquad\,$G$\qquad\,&\quad\, $\{m_1,\cdots,m_t\}$ \quad\, &\,\quad Basis of Num($Y$)\quad\,  \\\hline
			1 &$\mathbb{Z}_2$& $\{2,2,2,2\}$ & $\big\{(1/2)A,~B\big\}$ \\\hline
			2 &$\mathbb{Z}_2 \times \mathbb{Z}_2$& $\{2,2,2,2\}$ & $\big\{(1/2)A,~(1/2)B\big\}$ \\\hline
			3 &$\mathbb{Z}_4$& $\{2,4,4\}$ & $\big\{(1/4)A,~B\big\}$ \\\hline
			4 &$\mathbb{Z}_4\times \mathbb{Z}_2$& $\{2,4,4\}$ & $\big\{(1/4)A,~(1/2)B\big\}$ \\\hline
			5 &$\mathbb{Z}_3$& $\{3,3,3\}$ & $\big\{(1/3)A,~B\big\}$ \\\hline
			6 &$\mathbb{Z}_3\times \mathbb{Z}_3$& $\{3,3,3\}$ & $\big\{(1/3)A,~(1/3)B\big\}$ \\\hline
			7 &$\mathbb{Z}_6$& $\{2,3,6\}$ & $\big\{(1/6)A,~B\big\}$ \\\hline
		\end{tabular}
	\end{center}\vspace{3mm}
	Here $\mathbb{Z}_n$ stands for $\mathbb{Z}$ modulo $n\mathbb{Z}$,
	and the values $\{m_1,\cdots,m_t\}$ are the multiplicities of the singular fibers of $h_2:\, Y \to \bbp^1 \cong B/G$,
	which can be computed by applying the Riemann-Hurwitz formula to the map $B \to \bbp^1 \cong B/G$.

	\subsection{Xiao's technique and the second multiplication map}\label{sec-xiao-method}
	
	In this subsection, we briefly review
	Xiao's technique \cite{Xiao87} and the second multiplication map developed in \cite{lu-zuo-18}.
	Both techniques are based on the Harder-Narasimhan (H-N) filtration on the direct image sheaf $f_*\omega_f$.
	
	\begin{definition}\label{def-hn-filtra}
		For any locally free sheaf $\mathcal E$ on a smooth projective curve $B$,
		the slope of $\mathcal E$ is defined to be the rational number
		$\mu(\mathcal E)=\deg(\mathcal E)/{\rank(\mathcal E)}.$
		The sheaf $\mathcal E$ is said to be semi-stable,
		if for any coherent subsheaf $0\neq\mathcal E'\subsetneq\mathcal E$
		we have $\mu(\mathcal E')\leq\mu(\mathcal E)$.
		The Harder-Narasimhan (H-N) filtration of $\mathcal E$ is the following unique filtration:
		\begin{equation}\label{eqnharder-nara}
			0=\mathcal E_0\subset \mathcal E_1 \subset \cdots \subset \mathcal E_n=\mathcal E,
		\end{equation}
		such that:
		\begin{list}{}
			{\setlength{\labelwidth}{0.6cm}
				\setlength{\leftmargin}{0.7cm}}
			\item[(i)] the quotient $\mathcal E_i/\mathcal E_{i-1}$ is a locally free semi-stable sheaf for each $i$;
			\item[(ii)] the slopes are strictly decreasing
			$\mu(\mathcal E_i/\mathcal E_{i-1})>\mu( \mathcal E_{j}/\mathcal E_{j-1})$ if $i>j$.
		\end{list}
		The H-N filtration always exists.
		
		$\mathcal E$ is said to be positive (resp. semi-positive), if for any quotient sheaf  $\mathcal E \twoheadrightarrow \mathcal Q \neq 0$, one has $\deg \mathcal Q >0$ (resp. $\deg \mathcal Q \geq0$). Define $\mu_f(\mathcal E):=\mu( \mathcal E/\mathcal E_{n-1})$. Then $\mathcal E$ is positive (resp. semi-positive) if and only if $\mu_f(\mathcal E)>0$ (resp. $\mu_f(\mathcal E)\geq0$).
	\end{definition}
	
	Let now $f:\,S \to B$ be a surface fibration of genus $g\geq 2$, which is locally non-trivial.
	Take $\mathcal E=f_*\omega_f$,
	and in this case we write
	$$ E_i:=\mathcal E_i/\mathcal E_{i-1}, \quad \mu_i=\mu(E_i)~ (1\leq i\leq n); \mu_{n+1}:=0, \quad
	r_i=\rank(\mathcal E_i).$$

	Since $f_*\omega_f$ is semi-positive, we see $\mu_f(f_*\omega_f)=\mu_n\geq 0$.
	
	\begin{definition}[\cite{Xiao87}]\label{defofN(F)}
		Let $\mathcal E'$ be any locally free subsheaf of $f_*\omega_f$.
		The fixed and moving parts of $\mathcal E'$, denoted by $Z(\mathcal E')$ and $M(\mathcal E')$ respectively, are defined as follows.
		Let $\call$ be a sufficiently ample line bundle on $B$ such that the sheaf $\mathcal E'\otimes\call$ is generated by its global sections,
		and $\Lambda(\mathcal E')\subseteq |\omega_f\otimes f^*\call|$ be the linear subsystem corresponding to sections in $H^0(B,\,\mathcal E'\otimes\call)$.
		Then we define $Z(\mathcal E')$ to be the fixed part of $\Lambda(\mathcal E')$, and $M(\mathcal E')=\omega_f-Z(\mathcal E')$.
		Note that the definitions do not depend on the choice of $\call$.
	\end{definition}
	
	For a general fiber $F$ of $f$, let
	\begin{equation}\label{eqn-def-iota_i}
		\iota_i:~F \lra \Gamma_i \subseteq \mathbb P^{r_i-1}
	\end{equation}
	be the map defined by the restricted linear subsystem $\Lambda(\mathcal E_i)\big|_{F}$ on $F$ if $r_i\neq 1$,
	where $\mathcal E_i\subseteq f_*\omega_f$ is any subsheaf in
	the H-N filtration of $f_*\omega_f$ in \eqref{eqnharder-nara}.
	Let $d_i=M(\mathcal E_i)\cdot F$ and  $d_{n+1}:=2g-2$.
	
	\begin{definition}\label{def-double-cover}
		Let now $f:\,S \to B$ be a surface fibration of genus $g\geq 2$.
		Then $f$ is called a double cover fibration of type $(g,\gamma)$, if there is a rational two-to-one map $\pi:\,S \dashrightarrow Y$, where $h:\,Y \to B$ is a surface fibration of genus $\gamma$ and the following diagram commutes:
		$$\xymatrix{S \ar@{-->}[rr]^-{\pi} \ar[dr]_-{f} && Y \ar[dl]^-{h}\\
			&B&}$$
	\end{definition}
	\begin{remark}
		The above definition of double cover fibration is a little different from the one in \cite{barja01,cornalba-stoppino-08,lu-zuo-18}.
		In fact, the double cover fibration defined above is called a globally double cover fibration in \cite{lu-zuo-18}.
		A double cover fibration of type $(g,0)$ is just the hyperelliptic fibration; while a double cover fibration of type $(g,1)$ is also called a {\it bielliptic fibration}, which must be bielliptic in sense of \cite{barja01}.
		Conversely, a bielliptic fibration of genus $g\geq 6$ in sense of \cite{barja01} is also bielliptic in our sense. However, there exist examples of bielliptic fibrations in sense of \cite{barja01} but not in our sense, cf. \cite[Example\,1.2]{barja01}.
	\end{remark}
	
	The next lemma, which is due to Xiao, is crucial to the study of the slope of fibrations.
	\begin{lemma}[\cite{Xiao87}]\label{lemma-xiao}
		For any sequence of indices $1\leq i_1 <\cdots <i_{k}\leq n$, one has
		\begin{equation}\label{eqn-xiao}
			\omega_f^2\geq \sum_{j=1}^{k}\big(d_{i_j}+d_{i_{j+1}}\big)\big(\mu_{i_j}-\mu_{i_{j+1}}\big),\quad\text{where $i_{k+1}=n+1$.}
		\end{equation}
		In particular, one has
		\begin{equation}\label{eqn-xiao-1}
			\omega_f^2\geq \sum_{i=1}^{n}\big(d_{i}+d_{i+1}\big)\big(\mu_{i}-\mu_{i+1}\big).
		\end{equation}
	\end{lemma}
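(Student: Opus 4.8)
\emph{Proof proposal.} The plan is to run Xiao's original counting argument directly, built on the moving/fixed part decomposition of \autoref{defofN(F)}. First I would record the two positivity inputs. Since $f$ is relatively minimal, $\omega_f$ is nef. For each index in the chosen subsequence set $M_j:=M(\mathcal E_{i_j})$; as the moving part of a linear subsystem of $|\omega_f\otimes f^*\call|$ it has at most finitely many base points, hence is nef, and by construction $M_j\cdot F=d_{i_j}$. Because $\mathcal E_{i_1}\subseteq\cdots\subseteq\mathcal E_{i_k}\subseteq f_*\omega_f$, the associated linear systems $\Lambda(\mathcal E_{i_j})$ are nested, so the fixed parts $Z(\mathcal E_{i_j})$ decrease; equivalently the moving parts form an increasing chain of nef divisors with effective consecutive differences
\[
0\le M_1\le M_2\le\cdots\le M_k\le M_{k+1}:=\omega_f,
\]
and with $d_{i_{k+1}}=M_{k+1}\cdot F=2g-2=d_{n+1}$, matching the convention $i_{k+1}=n+1$.

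Next I would telescope $\omega_f^2$ along this chain, discarding effective (fixed and vertical) parts against the nef divisor $\omega_f$ so as to keep every inequality in the right direction. The quantitative engine is a per-step lower bound tying the surface intersection numbers to the slope gaps $\mu_{i_j}-\mu_{i_{j+1}}\ge 0$ of the H-N filtration \eqref{eqnharder-nara}. Here two ingredients combine: the semistability of the graded pieces $\mathcal E_i/\mathcal E_{i-1}$ controls the degrees $\deg\mathcal E_{i_j}=\sum_{l\le i_j}\mu_l\,(r_l-r_{l-1})$, while the restricted maps $\iota_{i_j}$ of \eqref{eqn-def-iota_i} convert this sheaf-theoretic data on $C$ into lower bounds for the horizontal degrees $d_{i_j}$ on a general fiber (a nondegenerate image in $\mathbb{P}^{r_{i_j}-1}$ forces $d_{i_j}$ to be large relative to $r_{i_j}$). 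Feeding these bounds into the telescoped sum and rearranging by Abel summation produces exactly the trapezoidal staircase $\sum_j(d_{i_j}+d_{i_{j+1}})(\mu_{i_j}-\mu_{i_{j+1}})$ on the right of \eqref{eqn-xiao}; taking the full filtration $i_j=j$, $k=n$ then gives \eqref{eqn-xiao-1} as the special case, and the general subsequence version is obtained by running the same telescoping along the coarser sub-chain $M_1\le\cdots\le M_{k+1}$, the concavity of the Harder--Narasimhan polygon guaranteeing that omitting intermediate steps only weakens the estimate.

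The main obstacle is precisely this engine: passing from the numerical data of the subsheaves $\mathcal E_{i_j}\subseteq f_*\omega_f$ living on the base curve $C$ to honest intersection numbers on the surface $S$. The delicate points are that one controls only the \emph{moving} part $M_j$, so the fixed part $Z(\mathcal E_{i_j})$ and any vertical contributions must be discarded with provably correct signs using the nefness of $\omega_f$ and of each $M_j$; that the restriction $M_j|_F$ may acquire base points, so the degree estimate coming from $\iota_{i_j}$ must be arranged to survive specialization to a general fiber; and that the bound has to hold uniformly for an arbitrary choice of indices $i_1<\cdots<i_k$, which is what ultimately makes \eqref{eqn-xiao} flexible enough for the later applications. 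Once this per-step inequality is established, the remaining rearrangement into the stated form is purely formal.
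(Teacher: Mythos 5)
The paper gives no proof of \autoref{lemma-xiao}: it is imported verbatim from \cite{Xiao87}, so your proposal can only be measured against Xiao's original argument. Your scaffolding is the right one — the nested moving parts $0\le M_1\le\cdots\le M_{k+1}=\omega_f$ with effective consecutive differences, their nefness, and a telescoping of $\omega_f^2$ along the chain — but the proposal omits the one idea that makes the lemma true. The slopes $\mu_{i_j}$ enter the intersection theory on $S$ only through the fact that each $\mathbb{Q}$-divisor $N_i:=M(\mathcal E_i)-\mu_i F$ is \emph{nef}; this is Xiao's key preliminary lemma, and it is where the Harder--Narasimhan structure is actually used (every quotient of $\mathcal E_i$ has slope at least $\mu_i$, so $\mathcal E_i$ twisted down by a divisor of degree just below $\mu_i$ still moves, whence $M_i-cF$ is nef for every rational $c<\mu_i$). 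Granting this, one telescopes $\omega_f^2= N_{n+1}^2\ge N_{n+1}^2-N_{i_1}^2=\sum_{j}\big(N_{i_{j+1}}+N_{i_j}\big)\cdot\big(N_{i_{j+1}}-N_{i_j}\big)$ and observes that $N_{i_{j+1}}-N_{i_j}-(\mu_{i_j}-\mu_{i_{j+1}})F=M_{i_{j+1}}-M_{i_j}$ is effective while $N_{i_{j+1}}+N_{i_j}$ is nef with $(N_{i_{j+1}}+N_{i_j})\cdot F=d_{i_j}+d_{i_{j+1}}$, which yields \eqref{eqn-xiao}. Telescoping the $M_i$ alone, as you describe, only gives $M_{i_{j+1}}^2\ge M_{i_j}^2$ and never produces the slope gaps; your text gestures at a ``per-step lower bound'' but never identifies it, and this is precisely the non-formal content of the lemma.

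The two mechanisms you substitute for it do not work. First, the bound ``a nondegenerate image in $\mathbb{P}^{r_{i}-1}$ forces $d_{i}$ to be large relative to $r_{i}$'' is not part of this lemma: in \eqref{eqn-xiao} the $d_{i_j}$ appear as free parameters, and the geometric lower bounds on $d_i$ coming from the maps $\iota_i$ (Clifford- or Castelnuovo-type estimates, \autoref{lemma-lu-zuo-1}, etc.) are a separate ingredient that the paper combines with \eqref{eqn-xiao} only afterwards; importing them here conflates the lemma with its applications to the slope inequality. Second, the claim that the subsequence version follows from \eqref{eqn-xiao-1} because ``omitting intermediate steps only weakens the estimate'' is false: dropping the middle index from $\{1,2,3\}$ changes the right-hand side by $(\mu_2-\mu_3)(d_2-d_1)-(\mu_1-\mu_2)(d_3-d_2)$, which can have either sign even though the $d_i$ are nondecreasing — indeed the whole point of allowing arbitrary subsequences is that a proper subsequence sometimes gives the \emph{stronger} bound. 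The subsequence case must be proved by running the telescoping directly on the sub-chain (as your first clause correctly suggests), so the concavity appeal is both wrong and unnecessary.
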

	
	The next two lemmas are due to Lu-Zuo \cite{lu-zuo-18,lu-zuo-19}.
	
	\begin{lemma}[{\cite[Equation (3.3)]{lu-zuo-18}}]\label{key-lemma-lu-zuo}
		Let $l=\min \{i|\iota_i$ is   birational $\}$.
		Assume that the general fiber $F$ is non-hyperelliptic. Then we have
		\begin{equation}\label{eqn-second-mult}
			\omega_f^2\geq \sum_{i=1}^{l-1}(3r_i-2)(\mu_i-\mu_{i+1})+\sum_{i=l}^{n}(5r_i-6)(\mu_i-\mu_{i+1}).
		\end{equation}
	\end{lemma}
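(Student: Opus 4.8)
The plan is to reduce the estimate to a lower bound for the degree of the \emph{second} Hodge bundle $f_*\omega_f^{\otimes 2}$ and to extract that bound from the quadratic multiplication map. First I would invoke the Grothendieck--Riemann--Roch relation
\[
\deg f_*\omega_f^{\otimes 2}=K_f^2+\chi_f,
\]
which holds because $g\geq 2$ forces $R^1f_*\omega_f^{\otimes 2}=0$; one computes $\big[\mathrm{ch}(\omega_f^{\otimes 2})\,\mathrm{td}\big]$ and absorbs the term $\tfrac1{12}(K_f^2+c_2)$ into $\chi_f$ using the relative Noether formula $12\chi_f=K_f^2+e_f$. Since $\chi_f=\deg f_*\omega_f=\sum_{i=1}^n r_i(\mu_i-\mu_{i+1})$ by summation by parts over the H-N filtration, it then suffices to bound $\deg f_*\omega_f^{\otimes 2}$ from below, which is exactly what the second multiplication map will achieve.

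Concretely, I would consider the symmetric multiplication $\mathrm{Sym}^2 f_*\omega_f\to f_*\omega_f^{\otimes 2}$ and set $\mathcal G_i=\im\big(\mathrm{Sym}^2\mathcal E_i\to f_*\omega_f^{\otimes 2}\big)$, giving an increasing filtration $0=\mathcal G_0\subseteq\mathcal G_1\subseteq\cdots\subseteq\mathcal G_n\subseteq f_*\omega_f^{\otimes 2}$. Every local section of $\mathcal G_i$ is a sum of products of two sections of $\mathcal E_i$, each of slope $\geq\mu_i$, so the minimal slope of $\mathcal G_i$ is $\geq 2\mu_i$ and hence $\mu(\mathcal G_i/\mathcal G_{i-1})\geq 2\mu_i$. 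Writing $R_i$ for the generic rank of $\mathcal G_i$ and using that $f_*\omega_f^{\otimes 2}$ is semi-positive (so $\deg f_*\omega_f^{\otimes 2}\geq\deg\mathcal G_n$), summation by parts yields
\[
\deg f_*\omega_f^{\otimes 2}\ \geq\ \sum_{i=1}^n 2R_i(\mu_i-\mu_{i+1}).
\]
Combining the displays gives $K_f^2\geq\sum_i(2R_i-r_i)(\mu_i-\mu_{i+1})$, so the whole lemma reduces to the fibrewise lower bounds $R_i\geq 2r_i-1$ for $i<l$ and $R_i\geq 3r_i-3$ for $i\geq l$.

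These two bounds are where non-hyperellipticity enters. On a general fiber $F$ put $V_i=\Lambda(\mathcal E_i)\big|_F\subseteq H^0(F,\omega_F)$, a base-point-free space of dimension $r_i$ defining $\iota_i$, and let $W_i=\im\big(\mathrm{Sym}^2V_i\to H^0(F,\omega_F^{\otimes 2})\big)$, so $R_i=\dim W_i$. For $i<l$, choosing a general pencil $\langle s_0,s_1\rangle\subseteq V_i$ and applying the base-point-free pencil trick shows $s_0V_i\cap s_1V_i=\langle s_0s_1\rangle$, whence $\dim W_i\geq\dim(s_0V_i+s_1V_i)=2r_i-1$. For $i\geq l$ the map $\iota_i$ is birational onto a nondegenerate curve $\Gamma_i\subseteq\mathbb P^{r_i-1}$, and a Castelnuovo-type general-position argument for the quadrics improves this to $\dim W_i\geq 3r_i-3$, the top piece $V_n=H^0(\omega_F)$ recovering Max Noether's equality $3g-3$. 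Substituting these into the reduced inequality gives exactly \eqref{eqn-second-mult}.

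The hard part will be the birational estimate $\dim W_i\geq 3r_i-3$ and its uniformity. I would need to verify that the quadrics through $\Gamma_i$ impose the expected number of conditions — precisely the step that fails for Clifford-extremal (hyperelliptic) curves and is rescued by the hypothesis that $F$ is non-hyperelliptic — and to treat separately the small-rank values $r_i\in\{1,2\}$ together with any base-locus contributions of $\Lambda(\mathcal E_i)$, checking that passing to products introduces no unexpected common factor lowering $\dim W_i$. Controlling this interaction between the fixed parts of the linear and the quadratic systems on the general fiber is the delicate technical core; everything else is the summation-by-parts bookkeeping already displayed above.
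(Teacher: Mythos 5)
The paper offers no proof of this lemma: it is quoted directly from \cite{lu-zuo-18}, Equation (3.3), so the only meaningful comparison is with the argument in that reference, and your reconstruction is essentially that argument. The skeleton is right and the bookkeeping checks out: $\deg f_*\omega_f^{\otimes 2}=K_f^2+\chi_f$, the filtration of $f_*\omega_f^{\otimes 2}$ by the images $\mathcal G_i$ of $\mathrm{Sym}^2\mathcal E_i$ with $\mu_{\min}(\mathcal G_i)\geq 2\mu_i$, and the fibrewise rank bounds $R_i\geq 2r_i-1$ (base-point-free pencil trick) and $R_i\geq 3r_i-3$ (general position for birational $\iota_i$) give precisely $2(2r_i-1)-r_i=3r_i-2$ and $2(3r_i-3)-r_i=5r_i-6$. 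The step you rightly flag as the technical core closes as follows: the general position theorem shows the quadrics of $\mathbb P^{r_i-2}$ impose $\min(d_i,2r_i-3)$ conditions on a general hyperplane section of $\Gamma_i$, so one needs $d_i\geq 2r_i-3$; since $V_i\subseteq H^0(\omega_F)$ the moving part of $\Lambda(\mathcal E_i)|_F$ is a special divisor, Clifford's theorem gives $d_i\geq 2r_i-2$, and the extremal case is exactly the hyperelliptic one excluded by hypothesis (which also supplies Max Noether's theorem for the top piece $i=n$). So the proposal is correct and follows the same route as the cited source; the only caveat is that it is a proof outline whose key fibrewise estimate is asserted rather than carried out, which is acceptable here since the paper itself delegates the entire proof to \cite{lu-zuo-18}.
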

	
	\begin{lemma}\label{lemma-lu-zuo-1}
		Let $S$ be a minimal surface of general type with $q=1,K^2=4\chi$, and its Albanese map $f:=a_S: S\rightarrow B=Alb(S)$ is a non-hyperelliptic fibration of genus $g\geq 6$.
		Suppose that  $\deg \iota_i\geq 2$. Then either
		\begin{enumerate}
			\item $\deg \iota_i=2$,  $f$ is a bielliptic fibration; or
			\item $d_i\geq \min \big\{ 3(r_i-1), 2(r_i-1)+ \frac{g}{2}\big\}$.
		\end{enumerate}
	\end{lemma}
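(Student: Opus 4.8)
The plan is to argue according to the degree $\delta:=\deg\iota_i$ of the map $\iota_i:F\to\Gamma_i\subseteq\mathbb{P}^{r_i-1}$ defined in \eqref{eqn-def-iota_i}, where $\Gamma_i$ is the image of a general fibre $F$. Write $e:=\deg\Gamma_i$; since $\Gamma_i$ is nondegenerate one has $e\geq r_i-1$, and since $\iota_i$ is defined by the moving part we have $\iota_i^{*}\mathcal{O}(1)=M(\mathcal E_i)\big|_F$, whence $d_i=\delta\cdot e$. When $\delta\geq 3$ this yields at once $d_i\geq 3(r_i-1)$, so alternative $(2)$ holds; thus the entire problem reduces to the case $\delta=2$.

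Assume $\delta=2$, let $C'$ be the normalization of $\Gamma_i$, and set $\gamma:=g(C')$, so that $\iota_i$ factors as a degree-two cover $\pi:F\to C'$ followed by the birational map $C'\to\Gamma_i$. If $\gamma=0$ then $F$ is a double cover of $\mathbb{P}^1$, i.e. hyperelliptic, contradicting the hypothesis. If $\gamma=1$, the general fibre is bielliptic; because the restricted systems $\Lambda(\mathcal E_i)\big|_F$ are cut out by the \emph{global} subsheaf $\mathcal E_i\subseteq f_*\omega_f$, the images $\Gamma_i$ sweep out a surface $Y$ carrying a genus-one fibration $h:Y\to B$ together with a rational degree-two map $S\dashrightarrow Y$ over $B$. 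Hence $f$ is a double cover fibration of type $(g,1)$ in the sense of \autoref{def-double-cover}, that is a bielliptic fibration, and alternative $(1)$ holds.

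The remaining case $\delta=2$, $\gamma\geq 2$ is where I expect the real difficulty. Now $\iota_i$ is induced on $C'$ by a line bundle $L''$ with $\deg L''=e$ and $h^0(C',L'')\geq r_i$, and $d_i=2e$. If $L''$ is special, Clifford's theorem gives $r_i-1\leq e/2$, hence $d_i=2e\geq 4(r_i-1)\geq 3(r_i-1)$, so $(2)$ holds. If $L''$ is non-special, Riemann--Roch gives $r_i\leq e-\gamma+1$, whence the crude bound $d_i\geq 2(r_i-1)+2\gamma$; the crux is to upgrade $2\gamma$ to the required $\min\{r_i-1,\,g/2\}$. To do this I would use the eigenspace decomposition of $H^0(F,\omega_F)$ under the covering involution of $\pi$, whose invariant part is $\pi^{*}H^0(C',\omega_{C'})$ of dimension $\gamma$, and analyze the parity of the common fixed part of the $r_i$ defining sections; in the non-special situation this fixed part is forced to be anti-invariant, placing the sections in the $(g-\gamma)$-dimensional anti-invariant summand and constraining $\gamma$. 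The decisive extra input is the hypothesis $K_S^2=4\chi(\mathcal O_S)$, which together with $g(C)=q(S)=1$ forces the minimal slope $\lambda_f=4$ (indeed $\chi_f=\chi$ and $K_f^2=K_S^2$): feeding the Harder--Narasimhan data of $f_*\omega_f$ into Xiao's inequality \autoref{lemma-xiao} and the second multiplication map \autoref{key-lemma-lu-zuo} and comparing with $\omega_f^2=4\chi_f$ should rule out a genus-$\gamma$ quotient with $2\leq\gamma<g/4$ when $g\geq 6$, thereby yielding $d_i\geq 2(r_i-1)+g/2$. Establishing this incompatibility of a small intermediate genus $\gamma\geq 2$ with slope $4$ and a non-hyperelliptic fibre is the main obstacle of the proof.
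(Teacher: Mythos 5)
Your reduction is sound as far as it goes: the case $\deg\iota_i\geq 3$, the exclusion of $\gamma=0$ by non-hyperellipticity, the identification of $\gamma=1$ with alternative (1), and the Clifford/Riemann--Roch dichotomy giving $d_i\geq 4(r_i-1)$ in the special case and $d_i\geq 2(r_i-1)+2\gamma$ in the non-special case all track the paper's argument, which packages exactly these degree bounds as \cite[Lemma\,2.3]{lu-zuo-18}. In particular your Riemann--Roch bound already settles the range $\gamma\geq g/4$, since then $2\gamma\geq g/2$.

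However, the case you yourself flag as ``the main obstacle'' --- $\deg\iota_i=2$ with $2\leq\gamma<g/4$ --- is precisely the content of the lemma, and you have not proved it; you only express the expectation that feeding Harder--Narasimhan data into \autoref{lemma-xiao} and \autoref{key-lemma-lu-zuo} ``should'' rule it out. That step is not routine. The paper disposes of it by invoking \cite[Theorem\,4.9]{lu-zuo-19}: for a non-hyperelliptic double cover fibration of type $(g,\gamma)$ with $g\geq 4\gamma+1$, the slope is forced to exceed $4$ unless $\gamma=1$; combined with $\lambda_f=K_S^2/\chi(\mathcal{O}_S)=4$ (valid because the base is elliptic, so $K_f^2=K_S^2$ and $\chi_f=\chi(\mathcal{O}_S)$), this excludes every intermediate genus $\gamma$ with $2\leq\gamma<g/4$. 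Proving such a slope bound for double cover fibrations is a substantial result in its own right and does not follow from the two inequalities you cite, which constrain $f$ itself rather than the intermediate genus-$\gamma$ fibration; your eigenspace/parity sketch does not substitute for it. A secondary, smaller point: for alternative (1) you should also justify that the fibrewise degree-two map onto a genus-one curve globalizes to a double cover fibration in the sense of \autoref{def-double-cover}; this uses $g\geq 6$ (uniqueness of the bielliptic involution), as the paper's remark following that definition indicates.
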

	\begin{proof}
		If $\deg \iota_i\geq 3$, then by \cite[Lemma\,2.3]{lu-zuo-18},  we have $d_i\geq 3(r_i-1)$, so we are in case (ii).
		Suppose next that $\deg \iota_i=2$.
		Then $f$ is a double cover fibration of type $(g,\gamma_i)$. If $g\geq 4\gamma_i+1$, by \cite[Theorem\,4.9]{lu-zuo-19},
		we have $\gamma_i=1$, i.e., $f$ is a bielliptic fibration.
		If $g\leq  4\gamma_i$, then  by \cite[Lemma\,2.3]{lu-zuo-18}, we get
		\[d_i\geq \min\{4(r_i-1), 2(r_i+\gamma_i-1)\}\geq \min\big\{4(r_i-1), 2(r_i-1)+ \frac{g}{2}\big\}.\qedhere\]
	\end{proof}

	\section{The linear bound on the genus of the Albanese fibration}\label{sec-proof-main}
	In this section, we are aimed to prove the linear bound on the genus of the Albanese fibration with $K_S^2\leq 4\chi(\mathcal{O}_S)$;	
	namely to prove the upper bound \eqref{eqn-main} in \autoref{thm-main}.
	\begin{theorem}\label{thm-main(1)}
		Let $S$ be a minimal irregular surface of general type with $K_S^2 \leq 4\chi(\mathcal{O}_S)$, and $f:\,S \to C$ its Albanese fibration whose general fiber is of genus $g$. Then
		\begin{equation*}
			g\leq \left\{\begin{aligned}
				&6, &\quad&\text{if~}\chi(\mathcal{O}_S)=1;\\
				&3 \chi(\mathcal{O}_S)+1, && \text{otherwise}.
			\end{aligned}\right.
		\end{equation*}
	\end{theorem}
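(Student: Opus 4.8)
The plan is to first strip away the easy ranges of $g(C)$ and $K_S^2$. For an Albanese fibration one has $g(C)=q(S)$, so if $q(S)\geq 2$ then $g(C)\geq 2$ and \eqref{eqn-1-1} already gives $g\leq \chi(\mathcal{O}_S)+1\leq 3\chi(\mathcal{O}_S)+1$. Hence I would assume $q(S)=1$, so that $C=E$ is elliptic, $\chi_f=\chi(\mathcal{O}_S)$, $K_f^2=K_S^2\leq 4\chi(\mathcal{O}_S)$, and the slope satisfies $\lambda_f\leq 4$. If $K_S^2\leq 4\chi(\mathcal{O}_S)-2$, then \eqref{eqn-1-2} yields $g\leq \tfrac{4\chi(\mathcal{O}_S)}{4\chi(\mathcal{O}_S)-K_S^2}\leq 2\chi(\mathcal{O}_S)<3\chi(\mathcal{O}_S)+1$, so only the two boundary values $K_S^2\in\{4\chi(\mathcal{O}_S)-1,\,4\chi(\mathcal{O}_S)\}$ survive. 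Finally, if $f$ is hyperelliptic I would quote \cite[Theorem 3.1]{ll24} for the sharper bound $g\leq 2\chi(\mathcal{O}_S)+2$, which is $\leq 3\chi(\mathcal{O}_S)+1$ for $\chi(\mathcal{O}_S)\geq 1$ and $\leq 6$ when $\chi(\mathcal{O}_S)=1$. Thus I am reduced to a non-hyperelliptic Albanese fibration over an elliptic base with $\lambda_f\leq 4$.

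\textbf{The bielliptic dichotomy.} Since $g\leq 5$ automatically satisfies $g\leq 3\chi(\mathcal{O}_S)+1$ when $\chi(\mathcal{O}_S)\geq 2$, I only need to handle $g\geq 6$, where \autoref{lemma-lu-zuo-1} applies. Working with the H--N filtration of $\mathcal E=f_*\omega_f$ and its invariants $r_i,\mu_i,d_i$, I would apply that lemma at every step with $\deg\iota_i\geq 2$: either $f$ is a \emph{bielliptic} fibration, or each such $d_i$ obeys $d_i\geq\min\{3(r_i-1),\,2(r_i-1)+g/2\}$. Substituting these bounds, together with the non-hyperelliptic refinement of \autoref{key-lemma-lu-zuo}, into Xiao's inequalities \eqref{eqn-xiao}--\eqref{eqn-xiao-1} and comparing with $\omega_f^2=K_f^2\leq 4\chi_f=4\sum_i r_i(\mu_i-\mu_{i+1})$, a direct estimate should force $\lambda_f>4$ once $g$ exceeds a small absolute constant. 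Consequently, in the non-bielliptic case $g$ is bounded by roughly $6$, which is $\leq 3\chi(\mathcal{O}_S)+1$ for $\chi(\mathcal{O}_S)\geq 2$ and gives exactly $g\leq 6$ when $\chi(\mathcal{O}_S)=1$.

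\textbf{The bielliptic case.} It remains to bound $g$ when $f$ is bielliptic, i.e. a degree-two map $\pi:S\dashrightarrow Y$ over $E$ onto an elliptic fibration $h:Y\to E$ with fiber class $\Phi$. As $h$ is an elliptic surface over an elliptic curve, the canonical bundle formula gives $K_Y\equiv \chi(\mathcal{O}_Y)\,\Phi$ numerically and $K_Y^2=0$. Writing the branch divisor as $R\in|2L|$ and using $K_S^2=2(K_Y+L)^2$ and $\chi(\mathcal{O}_S)=2\chi(\mathcal{O}_Y)+\tfrac12 L(L+K_Y)$, the hypothesis $K_S^2\leq 4\chi(\mathcal{O}_S)$ collapses to $K_Y\cdot L\leq 4\chi(\mathcal{O}_Y)$, i.e. $\chi(\mathcal{O}_Y)(g-1)\leq 4\chi(\mathcal{O}_Y)$ since $L\cdot\Phi=g-1$ by Riemann--Hurwitz on a fiber. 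If $\chi(\mathcal{O}_Y)>0$ this forces $g\leq 5$; if $\chi(\mathcal{O}_Y)=0$ then $Y$ has $\kappa=0$, $q=1$, $p_g=0$, hence is a bielliptic surface and $K_S^2=4\chi(\mathcal{O}_S)$. In this last case I would expand $L\equiv\tfrac{a}{\delta_A}A+\tfrac{b}{\delta_B}B$ in the basis of $\mathrm{Num}(Y)$ from Table 2.1, so that $g-1=L\cdot B=a\gamma/\delta_A$ and $\chi(\mathcal{O}_S)=\tfrac12 L^2=ab\gamma/(\delta_A\delta_B)$; this gives $g-1=\chi(\mathcal{O}_S)\,\delta_B/b\leq\chi(\mathcal{O}_S)\,\delta_B$ with $a,b\geq 1$. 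Running through the seven types, $\delta_B$ is maximal (equal to $3$) exactly for type $6$, which yields $g\leq 3\chi(\mathcal{O}_S)+1$ and pins down the extremal geometry of \autoref{thm-main-2}.

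\textbf{Main obstacle.} I expect the bielliptic case to be the delicate one. Making its numerology rigorous requires passing to the canonical model and controlling the singularities of $R$ (non-negligible singularities would alter the double-cover formulas for $K_S^2$ and $\chi(\mathcal{O}_S)$ and could break the identity $K_S^2=4\chi(\mathcal{O}_S)$), and confirming that $L$ lies in the stated lattice with $a,b\geq 1$. A separate technical point is the uniform treatment of the boundary value $K_S^2=4\chi(\mathcal{O}_S)-1$, since \autoref{lemma-lu-zuo-1} is phrased for $K_S^2=4\chi(\mathcal{O}_S)$; there one must rerun the Xiao/second-multiplication estimate directly under $\lambda_f\leq 4$. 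The computation closing the non-bielliptic case and the verification that type $6$ is the unique optimizer are the steps I would expect to be the most laborious.
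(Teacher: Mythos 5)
Your overall architecture matches the paper's: reduce to a non-hyperelliptic fibration over an elliptic base, split into bielliptic versus non-bielliptic, and in the bielliptic case run the double-cover numerology against Serrano's table. Your treatment of the bielliptic case is essentially sound (your canonical-bundle-formula argument that $\chi(\mathcal{O}_Y)>0$ forces $g\leq 5$ is a nice, slightly more self-contained route to the conclusion that $Y$ is a bielliptic surface than the paper's direct appeal to Barja's structure theorem, though you still need that theorem, or an equivalent, to know the bielliptic involution extends to a \emph{flat} double cover of a smooth minimal elliptic surface with branch divisor having at most negligible singularities --- without that, the formulas $K_S^2=2(K_Y+L)^2$ and $\chi(\mathcal{O}_S)=2\chi(\mathcal{O}_Y)+\tfrac12 L(L+K_Y)$ are not valid).

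The genuine gap is the non-bielliptic case, which is where most of the actual work in the paper lives. You assert that substituting the bounds of \autoref{lemma-lu-zuo-1} and \autoref{key-lemma-lu-zuo} into Xiao's inequality \eqref{eqn-xiao} ``should force $\lambda_f>4$ once $g$ exceeds a small absolute constant,'' so that $g\lesssim 6$. This is not what the available estimates give. The general result from \cite{lu-zuo-18} only yields: non-hyperelliptic, slope $4$, and $g\geq 16$ imply bielliptic; so a priori the non-bielliptic case leaves the whole range $g\leq 15$ open, and for $\chi(\mathcal{O}_S)\in\{1,2,3,4\}$ the target bound $3\chi(\mathcal{O}_S)+1$ (resp.\ $6$ for $\chi=1$) is strictly below $15$. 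Closing this gap is precisely the content of \autoref{prop-3-2}: a case-by-case analysis over all admissible Harder--Narasimhan types $(b_1,\dots,b_n)$ with $\sum b_i=\chi(\mathcal{O}_S)\leq 4$, in each case playing \eqref{eqn-xiao} against \eqref{eqn-second-mult} and the degree bounds on $d_i$, separately according to whether each $\iota_i$ is birational. A uniform ``direct estimate'' of the kind you describe does not exist; only the semistable case $n=1$ gives $g\leq 6$ in one line. Two further points you leave unaddressed: (a) before any of these estimates can be used one must know $\mu_n>0$, i.e.\ that the unitary summand $\mathcal{U}$ in \eqref{eqn-fujita} vanishes --- the paper disposes of $\rank\mathcal{U}>0$ by showing (via an \'etale base change and the Severi-line characterization) that it forces $f$ to be bielliptic; (b) the boundary value $K_S^2=4\chi(\mathcal{O}_S)-1$, which you correctly flag but defer, requires Konno's \emph{strict} slope inequality for non-hyperelliptic fibrations to narrow down to $(\chi,K^2,g)=(3,11,11)$ or $(4,15,14\text{--}15)$, followed by the same kind of case-by-case exclusion.
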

	After some elementary reductions, we will prove the above theorem based on \autoref{prop-3-1}, \autoref{prop-3-2}, and \autoref{prop-3-0}.
	The proofs of these propositions will be completed in \autoref{sec-proof-bielliptic}, \autoref{sec-proof-non-bielliptic} and
	\autoref{sec-first-red}.
	
	\begin{lemma}\label{lem-reduction}
		Let $S$ be a minimal irregular surface of general type with $K_S^2 \leq 4\chi(\mathcal{O}_S)$, and $f:\,S \to C$ its Albanese fibration whose general fiber is of genus $g$.
		\begin{enumerate}
			\item If $g(C)=q(S)\geq 2$, then $g\leq \chi(\mathcal{O}_S)+1$.
			\item If $f$ is hyperelliptic, then $g\leq 2\chi(\mathcal{O}_S)+2$.
		\end{enumerate}
	\end{lemma}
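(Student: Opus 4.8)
My plan is to treat the two assertions separately, after one common observation: for an Albanese fibration the base genus coincides with the irregularity, $g(C)=q(S)$. Indeed, by the universal property $C$ is the normalization of $a(S)$ and $\Alb(S)=\Jac(C)$, so $q(S)=\dim\Alb(S)=g(C)$. Thus the hypothesis $g(C)=q(S)\ge 2$ in (1) simply means $q(S)\ge 2$, and in (2) the only case not already subsumed by (1) will be $q(S)=g(C)=1$, i.e. $C$ elliptic.

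For (1) the sole input is the semi-positivity of the Hodge bundle. Since $f_*\omega_{S/C}$ is semi-positive we have $\chi_f=\deg f_*\omega_{S/C}\ge 0$, and by the invariant formula recalled in \autoref{sec-fibration}, $\chi_f=\chi(\mathcal{O}_S)-(g-1)(g(C)-1)$. Hence $(g-1)(g(C)-1)\le \chi(\mathcal{O}_S)$, and because $g(C)-1\ge 1$ this yields $g-1\le (g-1)(g(C)-1)\le \chi(\mathcal{O}_S)$, that is $g\le \chi(\mathcal{O}_S)+1$. This is precisely the chain \eqref{eqn-1-1}, and requires no further input.

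For (2) I would first dispose of $g(C)\ge 2$ by (1): there $g\le \chi(\mathcal{O}_S)+1\le 2\chi(\mathcal{O}_S)+2$, since $\chi(\mathcal{O}_S)\ge 1$ for a surface of general type. So I may assume $C$ elliptic, whence $\chi_f=\chi(\mathcal{O}_S)$, $K_f^2=K_S^2$, and the slope equals $K_S^2/\chi(\mathcal{O}_S)\le 4$. The ordinary slope inequality \eqref{eqn-slope} is useless here, since for $\lambda_f\le 4$ it is compatible with arbitrarily large $g$; the crux is therefore a refined estimate for hyperelliptic fibrations. The natural route is the relative hyperelliptic structure: the relative $g^1_2$ presents $f$ as a degree-two cover $\pi:S\dashrightarrow Y$ of a ruled surface $h:Y\to C$ over the elliptic curve $C$, branched along $R\in|2L|$ with $R\cdot\Gamma=2g+2$ on a fibre $\Gamma$, so $L\cdot\Gamma=g+1$. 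Using $K_Y^2=0=\chi(\mathcal{O}_Y)$ one obtains, for a smooth branch divisor, $\chi(\mathcal{O}_S)=\tfrac12 L\cdot(L+K_Y)$ and $K_S^2=2(L+K_Y)^2$, hence $K_S^2-4\chi(\mathcal{O}_S)=2\,K_Y\cdot L$; the hypothesis $K_S^2\le 4\chi(\mathcal{O}_S)$ then forces $K_Y\cdot L\le 0$. In general $R$ acquires singularities, which must be resolved before descending to the minimal model of $S$, and it is the correction terms attached to these singular (vertical) fibres that ultimately bound $g$. This analysis is exactly \cite[Theorem~3.1]{ll24}, giving $g\le \frac{4\chi(\mathcal{O}_S)+4}{2+(4\chi(\mathcal{O}_S)-K_S^2)}\le 2\chi(\mathcal{O}_S)+2$, which I would invoke directly.

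The main obstacle is precisely this refined hyperelliptic estimate. For a \emph{smooth} branch divisor the numerical identities degenerate: writing $L\equiv(g+1)\Sigma+\beta\Gamma$ with $\Sigma^2=-e$ one computes $\chi(\mathcal{O}_S)=\tfrac{g}{2}\big(2\beta-(g+1)e\big)$, whence $K_S^2=4(g-1)\chi(\mathcal{O}_S)/g$ — consistent with the hypothesis but giving no bound on $g$. The genuine content therefore lies in controlling the singularities of $R$ and the associated singular fibres, tracking the canonical resolution and the passage to the relatively minimal model of the double cover. Since all of this is carried out in the companion paper \cite{ll24} (the equality case $K_S^2=4\chi(\mathcal{O}_S)$ going back to \cite{Ish05}), I would cite it rather than reprove it; everything else — the reduction to $g(C)=1$ together with part (1) — is immediate.
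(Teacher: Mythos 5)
Your proposal is correct and matches the paper's own (very short) proof: part (1) is exactly the semi-positivity chain already displayed in \eqref{eqn-1-1}, and part (2) is handled by citing \cite[Theorem\,3.1]{ll24} (with \cite{Ish05} for the equality case), which is precisely what the paper does. The extra sketch you give of the double-cover analysis behind the cited hyperelliptic bound is accurate but not needed, since both you and the authors ultimately invoke that result rather than reprove it.
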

	\begin{proof}
		The first statement is already exhibited in \eqref{eqn-1-1};
		and the second one follows from
		\cite[Theorem\,3.1]{ll24} (see also \cite[Theorem\,0.1]{Ish05} for the case when $K_S^2 = 4\chi(\mathcal{O}_S)$).
	\end{proof}
	
	In the following, we will always assume that the base $C=E$ is an elliptic curve and that the Albanese fibration $f:\,S \to E$ is non-hyperelliptic
	unless otherwise stated.
	In particular, $q(S)=1$.
	
	\begin{proposition}\label{prop-3-1}
		Let $S$ be a minimal irregular surface of general type with $K_S^2 = 4\chi(\mathcal{O}_S)$, and $f:\,S \to E$ its Albanese fibration whose general fiber is of genus $g$. Suppose that $f$ is a bielliptic fibration. Then
		\begin{equation}\label{eqn-3-1}
			g\leq 3\chi(\mathcal{O}_S)+1.
		\end{equation}
	\end{proposition}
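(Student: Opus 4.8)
The plan is to turn the statement into a numerical computation on the target of the double cover. Under the running assumptions we have $q(S)=1$ and $g(C)=1$, so the fibration invariants degenerate: $\chi_f=\chi(\mathcal{O}_S)-(g-1)(g(C)-1)=\chi(\mathcal{O}_S)$ and $K_f^2=K_S^2-8(g-1)(g(C)-1)=K_S^2=4\chi(\mathcal{O}_S)$. Hence the slope is exactly $\lambda_f=4$, the extremal value for a bielliptic fibration, and the whole point is that this extremality rigidifies the double cover structure. Since $f$ is bielliptic it is a double cover of an elliptic fibration $h\colon Y\to E$, and I would exploit $\lambda_f=4$ to control $Y$.

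Second, I would invoke Barja's characterization of minimal-slope bielliptic fibrations \cite{barja01}. After replacing $S$ by its canonical model and resolving the rational two-to-one map, this should exhibit $S_{can}$ as a flat double cover $\pi\colon S_{can}\to Y$ of a relatively minimal elliptic surface $h\colon Y\to E$, branched along a divisor $R\in|2L|$ with at most negligible singularities; since $K_{S_{can}}=\pi^*(K_Y+L)$ is ample and $\pi$ is finite, $K_Y+L$ is ample. To pin down $Y$ I would use the irregularity: for $L$ big and nef one has $h^1(Y,L^{-\vee})=0$, so $q(S)=q(Y)$, and $q(S)=1$ forces $q(Y)=1$. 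Feeding the canonical bundle formula for $h$ into the double-cover expression for $\lambda_f$ shows that $\lambda_f=4$ forces either $\chi(\mathcal{O}_Y)=0$ or the sporadic value $g=5$; in the latter case one checks directly that $\chi(\mathcal{O}_S)\geq 5$, so $g=5\leq 3\chi(\mathcal{O}_S)+1$ holds trivially. Thus I may assume $\chi(\mathcal{O}_Y)=0$, and with $q(Y)=1$ this identifies $Y$ as a bielliptic surface, so $K_Y\equiv 0$, $L$ is ample, and $\mathrm{Num}(Y)$ is as in Table 2.1.

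Third comes the computation. For a flat double cover with $K_Y\equiv 0$ and $\chi(\mathcal{O}_Y)=0$ the standard formulas give $\chi(\mathcal{O}_S)=\tfrac12 L^2$ and $K_S^2=2L^2=4\chi(\mathcal{O}_S)$, consistent with $\lambda_f=4$. Writing $\tilde F\sim B$ for the general fiber of $h$, an elliptic curve, Riemann--Hurwitz for the degree-two cover $F\to\tilde F$ gives $2g-2=R\cdot B=2\,L\cdot B$, i.e. $g-1=L\cdot B$. I then expand $L$ in the basis $\{e_1,e_2\}=\{\tfrac1m A,\tfrac1n B\}$ of $\mathrm{Num}(Y)$ provided by Table 2.1. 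Since $A^2=B^2=0$ and $A\cdot B=\gamma=mn$, this is the hyperbolic unimodular lattice $e_1^2=e_2^2=0$, $e_1\cdot e_2=1$; writing $L=ae_1+be_2$ with $a,b\in\mathbb{Z}$ yields $\chi(\mathcal{O}_S)=\tfrac12 L^2=ab$ and $g-1=L\cdot B=na$.

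Finally I conclude by positivity. The nef cone of a bielliptic surface is spanned by $A$ and $B$, so $L$ nef gives $L\cdot A=mb\geq 0$ and $L\cdot B=na\geq 0$, hence $a,b\geq 0$; bigness $L^2=2ab>0$ then forces $a\geq 1$ and $b\geq 1$. Therefore $\chi(\mathcal{O}_S)=ab\geq a$, and since every type in Table 2.1 satisfies $n\leq 3$,
\[
g=na+1\leq 3a+1\leq 3\chi(\mathcal{O}_S)+1,
\]
with equality forcing $n=3$ (type $6$) and $b=1$, in agreement with \autoref{thm-main-2}. I expect the main obstacle to be the second step: extracting from Barja's slope characterization a genuinely clean flat-double-cover model over a bielliptic surface, in particular controlling the birational modifications and the branch singularities so that the smooth double-cover formulas legitimately compute the invariants of $S_{can}$, and ruling out (or, as above, directly absorbing) the elliptic surfaces $Y$ with $\chi(\mathcal{O}_Y)>0$.
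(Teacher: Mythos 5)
Your proposal is correct and follows essentially the same route as the paper's proof: invoke Barja's characterization of slope-$4$ bielliptic fibrations to realize $S_{can}$ as a flat double cover of a bielliptic surface $Y$, compute $g$ and $\chi(\mathcal{O}_S)$ via Riemann--Hurwitz and the double-cover formulas, and extract the bound from the integrality of the (half-)branch class in $\mathrm{Num}(Y)$ together with the fact that the denominator $\beta$ (your $n$) is at most $3$ in Table 2.1. The only cosmetic differences are that the paper gets $\chi(\mathcal{O}_Y)=0$ directly from the local triviality of $h$ asserted in Barja's theorem rather than re-deriving it from the canonical bundle formula, and phrases the divisibility as $2\mid b\beta$ for $R\sim aA+bB$ with rational $a,b$ rather than via your integral class $L$.
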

	
	\begin{proposition}\label{prop-3-2}
		Let $S$ be a minimal irregular surface of general type with $K_S^2 = 4\chi(\mathcal{O}_S)$, and $f:\,S \to E$ its Albanese fibration whose general fiber is of genus $g$. Suppose that $f$ is not bielliptic. Then
		\begin{equation}\label{eqn-3-2}
			g\leq \left\{\begin{aligned}
				&6, &\quad&\text{if~}\chi(\mathcal{O}_S)=1;\\
				&3 \chi(\mathcal{O}_S)+1, && \text{otherwise}.
			\end{aligned}\right.
		\end{equation}
	\end{proposition}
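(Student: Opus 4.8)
The plan is to exploit that the hypotheses pin the relative slope at the borderline value $\lambda_f=4$, and then to show that, once bielliptic fibrations are excluded, the refined slope inequalities for non-hyperelliptic fibrations can only be saturated when $g$ is controlled linearly by $\chi(\mathcal O_S)$. For the reductions: since $C=E$ is elliptic we have $g(C)=1$, hence $\chi_f=\chi(\mathcal O_S)$ and $\omega_f^2=K_f^2=K_S^2=4\chi(\mathcal O_S)=4\chi_f$, so $\lambda_f=4$. As $f$ is non-hyperelliptic, the relative canonical map of a general fibre is birational, so $\iota_n$ is birational and $l:=\min\{i:\iota_i\text{ is birational}\}$ is well defined. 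If $g\le 6$ both asserted inequalities hold, so I may assume $g\ge 7$; in particular $g\ge 6$ and \autoref{lemma-lu-zuo-1} applies.

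Next I combine the degree identity $\chi_f=\sum_{i=1}^n r_i(\mu_i-\mu_{i+1})$ of the Hodge bundle (Abel summation, with $\mu_{n+1}=0$) with the two lower bounds for $\omega_f^2$: Xiao's inequality (\autoref{lemma-xiao}) and the second multiplication inequality (\autoref{key-lemma-lu-zuo}). For $i\ge l$ the fibre is birationally embedded by $\iota_i$ through a subsystem of $|K_F|$, so Clifford's theorem gives $d_i\ge 2(r_i-1)$; for $i<l$ the map $\iota_i$ is not birational, and since $f$ is not bielliptic \autoref{lemma-lu-zuo-1} gives $d_i\ge\min\{3(r_i-1),\,2(r_i-1)+g/2\}$. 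Substituting $\omega_f^2=4\sum_i r_i(\mu_i-\mu_{i+1})$ into \autoref{key-lemma-lu-zuo} and cancelling collapses to the key comparison
\[\sum_{i\ge l}(r_i-6)(\mu_i-\mu_{i+1})\ \le\ \sum_{i<l}(r_i+2)(\mu_i-\mu_{i+1}),\]
which forces the high-rank (birational) tail of the filtration to have small slope.

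To extract the bound, I apply \autoref{lemma-xiao} at the single index $l$ together with $d_l\ge 2(r_l-1)$, obtaining $4\chi_f=\omega_f^2\ge (d_l+2g-2)\mu_l\ge (2g-2)\mu_l$, so that $\mu_l\le 2\chi_f/(g-1)$ and hence every $\mu_i$ with $i\ge l$ satisfies $\mu_i\le 2\chi_f/(g-1)$. I then split into two cases. If every non-birational $\iota_i$ has degree $\ge 3$, then $d_i\ge 3(r_i-1)$ on the whole head $i<l$; the resulting coefficient-$3$ estimates, together with the squeeze above and the integrality of the Harder--Narasimhan slopes over $E$ (each $\mu_i\in\frac{1}{\rank E_i}\mathbb Z$), bound $g$ and yield $g\le 6$ when $\chi(\mathcal O_S)=1$ and $g\le 3\chi(\mathcal O_S)+1$ otherwise. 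If instead some $\iota_i$ has degree exactly $2$, then $f$ is a double cover fibration of type $(g,\gamma)$; non-biellipticity forces $\gamma\ge 2$, \autoref{lemma-lu-zuo-1} (via \cite{lu-zuo-19}) gives $g\le 4\gamma$, and the standard invariant relations for a double cover combined with $K_S^2=4\chi(\mathcal O_S)$ bound $\gamma$ linearly in $\chi(\mathcal O_S)$, again giving $g\le 3\chi(\mathcal O_S)+1$.

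The delicate point, and the step I expect to be the main obstacle, is the slope-$0$ part of $f_*\omega_f$: when $\mu_n=0$ the top graded piece contributes nothing to $\chi_f$, so the squeeze $\mu_l\le 2\chi_f/(g-1)$ no longer controls $g=r_n$ directly. This case has to be treated separately through the Fujita decomposition $f_*\omega_f=\mathcal U\oplus\mathcal A$ and a bound on $\rank\mathcal U$ (cf. \autoref{coro-3-1}); likewise, in the double-cover subcase the passage from the crude $g\le 4\gamma$ to the sharp constant $3\chi(\mathcal O_S)+1$ requires a careful analysis of the branch divisor and the induced invariants.
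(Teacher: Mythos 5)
Your setup is on the right track: you correctly reduce to $\lambda_f=4$ over an elliptic base, invoke \autoref{key-lemma-lu-zuo}, \autoref{lemma-xiao} and \autoref{lemma-lu-zuo-1}, and you correctly flag that the slope-zero part of $f_*\omega_f$ is handled by the Fujita decomposition (under the non-bielliptic hypothesis, \autoref{lemma-rkU-1} in fact forces $\rank\mathcal U=0$, so the Hodge bundle is ample and every $\mu_i>0$). But the core of the argument is missing. First, you never establish any a priori bound on $g$; the paper's proof hinges on \cite[Theorem~1.5]{lu-zuo-18}, which says a non-hyperelliptic, non-bielliptic fibration with $\lambda_f=4$ has $g\le 15$. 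That single fact disposes of all $\chi(\mathcal O_S)\ge 5$ at once and, combined with ampleness of $f_*\omega_f$ (so $\chi(\mathcal O_S)=\sum\deg E_i\ge n$), bounds the length of the Harder--Narasimhan filtration by $n\le 4$. Without it, your ``case-by-case'' runs over an unbounded family of filtration data and cannot terminate. Second, the two displayed inequalities you extract do not by themselves bound $g$. Your key comparison reads $\sum_{i\ge l}(r_i-6)(\mu_i-\mu_{i+1})\le\sum_{i<l}(r_i+2)(\mu_i-\mu_{i+1})$; when $l=n$ the left side is $(g-6)\mu_n$ and $\mu_n$ can be as small as $1/g$, so the comparison is perfectly consistent with $g$ large, and the squeeze $\mu_l\le 2\chi_f/(g-1)$ only constrains the tail, not the head, of the filtration. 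The actual derivation of $g\le 3\chi(\mathcal O_S)+1$ is precisely the several-page case analysis in Sections 3.2.1--3.2.3 of the paper (enumerating $(b_1,\dots,b_n)$ and the ranks $a_i$, and playing \eqref{eqn-xiao} against \eqref{eqn-second-mult} in each configuration); asserting that ``the resulting coefficient-$3$ estimates \dots bound $g$'' is not a proof.

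The degree-$2$ subcase is also not handled correctly. If some $\iota_i$ has degree $2$ and $f$ is not bielliptic, \autoref{lemma-lu-zuo-1} tells you $g\le 4\gamma_i$ and, more usefully, $d_i\ge\min\{3(r_i-1),\,2(r_i-1)+g/2\}$; the double cover here is only a rational map defined by the sub-linear system $\Lambda(\mathcal E_i)$, so there are no ``standard invariant relations'' relating $\gamma_i$ to $\chi(\mathcal O_S)$ without first resolving the map and controlling the branch locus --- and the paper never attempts this. Instead it feeds the $d_i$ lower bound back into Xiao's inequality \eqref{eqn-xiao} within the same finite case analysis. In short, your proposal assembles the correct toolbox but omits the two ingredients that make the proof work (the $g\le 15$ boundedness and the resulting finite enumeration) and replaces the decisive computation with an unsubstantiated claim.
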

	
	\begin{proposition}\label{prop-3-0}
		Let $S$ be a minimal irregular surface of general type with $K_S^2 < 4\chi(\mathcal{O}_S)$, and $f:\,S \to E$ its Albanese fibration whose general fiber is of genus $g$. Then
		\begin{equation}\label{eqn-3-0}
			g\leq 3\chi(\mathcal{O}_S)+1.
		\end{equation}
	\end{proposition}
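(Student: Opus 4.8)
The plan is to exploit that, since the base $C=E$ is elliptic, the relative invariants collapse onto the absolute ones: from $g(C)=1$ we get $\chi_f=\chi(\mathcal O_S)$ and $K_f^2=K_S^2$, so the hypothesis $K_S^2<4\chi(\mathcal O_S)$ is exactly the slope bound $\lambda_f=K_f^2/\chi_f<4$. Since the target $g\leq 3\chi(\mathcal O_S)+1$ is equivalent to $3\chi_f\geq g-1$, the whole statement reduces to an inequality among the numerical data of the Harder--Narasimhan filtration $0=\mathcal E_0\subset\cdots\subset\mathcal E_n=f_*\omega_f$, recorded by the ranks $r_i$ and the slope gaps $a_i:=\mu_i-\mu_{i+1}\geq 0$ (with $\mu_{n+1}=0$), subject to the degree formula $\chi_f=\sum_i r_i a_i$ and $r_n=g$.

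First I would dispose of the bielliptic case, which is not excluded by the standing assumptions of this section. By Barja's characterization of bielliptic fibrations of minimal slope \cite{barja01}, such a fibration satisfies $\lambda_f\geq 4$; as we assume $\lambda_f<4$, the fibration $f$ cannot be bielliptic once $g$ is large enough for that bound to apply, and the finitely many small-genus exceptions satisfy $g\leq 3\chi(\mathcal O_S)+1$ trivially because $\chi(\mathcal O_S)\geq 1$. Hence I may assume $f$ is non-hyperelliptic and non-bielliptic, and in particular (the general fibre being canonically embedded) the index $l=\min\{i\mid \iota_i \text{ birational}\}$ is well defined with $l\leq n$.

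In this regime the key input is the Lu--Zuo second multiplication map. Applying \autoref{key-lemma-lu-zuo} gives
$$K_S^2\geq \sum_{i=1}^{l-1}(3r_i-2)a_i+\sum_{i=l}^{n}(5r_i-6)a_i,$$
while for each non-birational index $i$ the bound $d_i\geq\min\{3(r_i-1),\,2(r_i-1)+g/2\}$ of \autoref{lemma-lu-zuo-1} (whose proof uses only non-biellipticity, not $K_S^2=4\chi$) sharpens the coefficients fed into Xiao's inequality \autoref{lemma-xiao}. Comparing the displayed bound with $4\chi_f=4\sum_i r_i a_i$ and invoking $K_S^2<4\chi_f$ yields
$$\sum_{i\geq l}(r_i-6)a_i<\sum_{i<l}(r_i+2)a_i,$$
and the remaining task is to convert this, together with $\chi_f=\sum_i r_i a_i$, into $r_n=g\leq 3\chi_f+1$.

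The main obstacle is precisely this last combinatorial optimization: one must show that, over all filtration data compatible with $\lambda_f<4$, the genus $r_n$ is maximized relative to $\chi_f$ exactly along the near-bielliptic boundary, producing the clean linear bound $3\chi_f+1$. The two delicate points I anticipate are (a) balancing the ``birational tail'' $i\geq l$ against the ``non-birational head'' $i<l$, where the term $(r_n-6)a_n=(g-6)\mu_n$ dominates the left-hand side for large $g$ and must be controlled against the bounded ranks appearing for $i<l$; and (b) the borderline case $\mu_n=0$, in which the top graded piece is a unitary flat quotient, so that $f_*\omega_f$ carries a nontrivial summand in its Fujita decomposition \eqref{eqn-fujita}. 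This last case escapes the slope estimate above and is where I would close the argument by the complementary analysis of the unitary summand (cf.\ \autoref{coro-3-1}).
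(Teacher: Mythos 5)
Your setup correctly identifies the machinery (Harder--Narasimhan data, Xiao's inequality, the Lu--Zuo second multiplication bound), but the proof does not close: the step you yourself call ``the main obstacle'' --- converting $\sum_{i\geq l}(r_i-6)a_i<\sum_{i<l}(r_i+2)a_i$ together with $\chi_f=\sum_i r_ia_i$ into $g\leq 3\chi_f+1$ --- is left entirely open, and in fact that single inequality provably cannot yield the bound. Take $n=2$ with $\rank\mathcal E_1=2$, $\deg E_1=\deg E_2=1$ and $l=2$: your inequality reduces to $2b_1>b_2$, i.e.\ $2>1$, which holds with $\chi_f=2$ while $g$ is completely unconstrained. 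So the ``combinatorial optimization over all filtration data'' is not a well-posed finite problem as you have set it up; one must re-inject Xiao's inequality \autoref{lemma-xiao} with the degree bounds $d_i$ from \autoref{lemma-lu-zuo-1} separately for each filtration type, and even then one needs an a priori finite list of cases.

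The paper obtains that finite list by two reductions you do not use. First, since $f$ may be assumed non-hyperelliptic by \autoref{lem-reduction}, Konno's \emph{strict} slope inequality $\lambda_f>\tfrac{4(g-1)}{g}$ (from \cite{kon-93}) gives $g<\tfrac{4\chi(\mathcal O_S)}{4\chi(\mathcal O_S)-K_S^2}$; this disposes of every case with $4\chi(\mathcal O_S)-K_S^2\geq 2$ outright (then $g<2\chi(\mathcal O_S)$) and leaves only $K_S^2=4\chi(\mathcal O_S)-1$. Second, $g\leq 15$, because $\lambda_f<4$ rules out both the high-genus non-bielliptic alternative of \cite{lu-zuo-18} and the bielliptic one of \cite{barja01}; moreover ampleness of $f_*\omega_{S/E}$ follows from \cite{barja-zucconi-01} (your appeal to \autoref{coro-3-1} is misplaced, as that corollary concerns $K_S^2=4\chi(\mathcal O_S)$). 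Combined with the contradiction hypothesis $g\geq 3\chi(\mathcal O_S)+2$, only the numerical cases $(\chi,K_S^2,g)=(3,11,11)$ and $(4,15,14\text{ or }15)$ survive, and each is excluded by a short explicit check on the possible filtrations. Without these reductions your argument has no finite case analysis to fall back on, so as written it contains a genuine gap rather than a complete alternative route.
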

	
	\begin{proof}[{Proof of \autoref{thm-main(1)}}]
		It follows immediately from the above three propositions.
	\end{proof}

	\subsection{The bielliptic Albanese fibration}\label{sec-proof-bielliptic}
	In this subsection, we will first prove \autoref{prop-3-1}.
	Then we will confirm in \autoref{lemma-rkU-1} that
	the Albanese fibration $f$ would be bielliptic if either $g\geq 16$ or $f_*\omega_{S/E}$ has a non-trivial unitary summand with $g\geq 3$.
	
	\begin{proof}[{Proof of \autoref{prop-3-1}}]
		%
		If $g\leq 4$, then it is clearly true since $\chi(\mathcal{O}_S)\geq 1$. Now we assume $g\geq 5$.	Since the base $E$ is an elliptic curve,
		it follows that $K_f^2=K_S^2 = 4\chi(\mathcal{O}_S)=4\chi_f$.
		According to \cite[Theorem\,2.1]{barja01}, there is a double cover $\pi:\,S \to Y$ with the following commutative diagram
		$$\xymatrix{S \ar[rr]^-{\pi} \ar[d]_-{f} && Y \ar[d]^-{h}\\
			E \ar[rr]^-{=} && E}$$
		Moreover, the fibration $h:\, Y \to E$ is a locally trivial elliptic fibration,
		and the branch divisor $R\subseteq Y$ of $\pi$ admits at most negligible singularities.
		
		The local-triviality of $h$ implies that
		$$\chi(\mathcal{O}_Y)=0.$$
		Note also that $f$ is the Albanese fibration of $S$.
		Hence $q(S)=g(E)=1$,
		from which it follows that $q(Y)=g(E)=1$.
		Thus $p_g(Y)=\chi(\mathcal{O}_Y)+q(Y)-1=0$.
		Therefore $Y$ is a bielliptic surface.
		
		We use the notations introduced in \autoref{sec-bielliptic}. So
		there exist two elliptic curves $A$ and $B$,
		such that $Y\cong (A \times B) /G$, where $G$ acts diagonally on $A \times B$ with
		$A/G$ being an elliptic curve and $B/G\cong \bbp^1$.
		The projections of $A \times B$ to its two factors induce two elliptic fibrations on $Y$:
		$$h_1:\,Y \lra A /G,\qquad h_2:\,Y \lra \bbp^1.$$
		Note that the fibration $h:\,Y \to E$ is locally trivial.
		It follows that $A/G\cong E$, and that the fibration $h$ is the same as $h_1$.
		By abuse of notations, denote also by $A, B$ the classes \big(in Num($Y$), $H^2(Y,\mathbb{Z})$, or $H^2(Y,\mathbb{Q})$\big) of the fibers of $h_2$ and $h$ respectively.
		Let
		$$R \sim a A+bB.$$
		As the branch divisor $R\subseteq Y$ of $\pi$ admits at most negligible singularities,
		by the Riemann-Hurwitz formula, one obtains that (where $\gamma=A\cdot B$ is equal to the order of the group $G$)
		\begin{equation*}
			2g-2=R\cdot B=a\gamma, \quad \Longrightarrow \quad g=\frac{a\gamma}{2}+1;\qquad
		\end{equation*}
		
		\begin{equation}\label{eqn-3-10}	\chi(\mathcal{O}_S)=2\chi(\mathcal{O}_Y)+\frac12\Big(\frac12R+K_Y\Big)\cdot\frac12R=\frac{ab\gamma}{4}.
		\end{equation}
		Thus
		\begin{equation}\label{eqn-3-11}
			g=\frac{2\chi(\mathcal{O}_S)}{b}+1.
		\end{equation}
		On the other hand, the basis of Num($Y$) is given by
		$$\left\{\frac{A}{\alpha},~\frac{B}{\beta}\right\},$$
		where the integers numbers $\{\alpha,\beta\}$ are given in Table 1 in \autoref{sec-bielliptic},
		depending on the action of the group $G$.
		It follows that
		$$R \sim a A+bB=a\alpha\,\frac{A}{\alpha}+b\beta\,\frac{B}{\beta}.$$
		As $R$ is the branch divisor of the double cover $\pi:\,S \to Y$, $R$ is two-divisible. In particular,
		\begin{equation}\label{eqn-modui-need}
			2 ~\big|~b\beta, \quad \Longrightarrow\quad b\beta\geq 2.
		\end{equation}
		According to Table\,2.1 in \autoref{sec-bielliptic}, one sees that $\beta\leq 3$.
		Hence $b\geq \frac23$. Combining this with \eqref{eqn-3-11}, one proves the required bound \eqref{eqn-3-1}.
	\end{proof}
	
	\begin{remark}\label{rem-bielliptic}
		According to the proof above, if the equality in \eqref{eqn-3-1} holds,
		then $b=\frac{2}{3}$, and hence $\beta=3$.
		This implies that the bielliptic surface $Y$ is the surface of type 6 appearing in Table\,2.1 in \autoref{sec-bielliptic}.
	\end{remark}
	
	\begin{lemma}\label{lemma-rkU-1}
		Let $S$ be a minimal irregular surface of general type with $K_S^2 = 4\chi(\mathcal{O}_S)$, and $f:\,S \to E$ be its Albanese fibration whose general fiber is non-hyperelliptic of genus $g$.
		Then $f$ is a bielliptic fibration if any one of the following assumption holds.
		\begin{enumerate}
			\item The genus $g\geq 16$.
			\item The genus $g\geq 3$ and $\rank \mathcal{U} > 0$, where
			\begin{equation}\label{eqn-fujita}
				f_*\omega_{S/E}=\mathcal{A} \oplus \mathcal{U},
			\end{equation}
			is the Fujita decomposition of $f_*\omega_{S/E}$, where $\mathcal{A}$ is ample and $\mathcal{U}$ is unitary.
		\end{enumerate}
	\end{lemma}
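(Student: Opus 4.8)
The plan is to argue by contradiction, assuming $f$ is not bielliptic. Since the base $E$ is elliptic we have $g(E)=1$, so $\omega_f^2=K_f^2=K_S^2=4\chi(\mathcal O_S)=4\chi_f$, and $f$ has the minimal slope $\lambda_f=4$. Write the H--N filtration \eqref{eqnharder-nara} of $\mathcal E=f_*\omega_f$ with ranks $r_i$ and slopes $\mu_1>\cdots>\mu_n\ge 0$, so that $r_n=g$ and, with $\mu_{n+1}=0$, the degree identity $\chi_f=\deg\mathcal E=\sum_{i=1}^n r_i(\mu_i-\mu_{i+1})$ holds. The idea is to feed lower bounds on the $d_i$ into \autoref{lemma-xiao} and \autoref{key-lemma-lu-zuo}, compare with $\omega_f^2=4\chi_f$, and show that the genus hypotheses force an overshoot. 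The universal input is that, since $\Lambda(\mathcal E_i)|_F\subseteq|K_F|$ is a subcanonical (hence special) system, the classical Clifford bound gives $d_i\ge 2(r_i-1)$ for every $i$, together with $d_{n+1}=2g-2$; inserting only this into \eqref{eqn-xiao-1} and subtracting $4\chi_f=4\sum_i r_i(\mu_i-\mu_{i+1})$ shows that the bound $d_i\ge2(r_i-1)$ exactly accounts for the value $4\chi_f$ up to boundary terms. Hence any index at which $d_i$ strictly exceeds $2(r_i-1)$ produces a genuine surplus, and the whole proof is a quantitative accounting of such surpluses.

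For part (1), since $g\ge 16\ge 6$, \autoref{lemma-lu-zuo-1} applies at every index. At each $i$ with $\deg\iota_i\ge 2$ the assumption that $f$ is not bielliptic rules out its first alternative, giving the improved bound $d_i\ge\min\{3(r_i-1),\,2(r_i-1)+\tfrac g2\}$, i.e.\ a surplus $\min\{r_i-1,\tfrac g2\}$ over the Clifford value. Let $l=\min\{i:\iota_i\ \text{birational}\}$. If $l>1$, there is at least one non-birational index, whose surplus, weighted by its slope gap in the sharp inequality \eqref{eqn-xiao}, is too large to be absorbed for $g\ge 16$, contradicting $\omega_f^2=4\chi_f$. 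If instead $l=1$, i.e.\ $\iota_1$ is already birational and no improved bound is available from \autoref{lemma-lu-zuo-1}, then \autoref{key-lemma-lu-zuo} with $l=1$ combined with $\chi_f\ge r_1\mu_1$ forces $r_1\le 6$; but then $\Gamma_1\subset\mathbb P^{r_1-1}$ is a non-degenerate birational image of a curve of genus $g\ge 16$ in a projective space of dimension at most $5$, so Castelnuovo's genus bound forces $d_1$ to exceed $2(r_1-1)$ by a definite amount, and reinserting this surplus into \eqref{eqn-xiao} again overshoots $4\chi_f$. Either way we reach a contradiction, so $f$ is bielliptic.

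For part (2), the Fujita summand $\mathcal U$ in \eqref{eqn-fujita} is unitary of slope $0$, so it realizes the minimal slope: $\mu_n=0$ and $\rank E_n\ge\rank\mathcal U\ge 1$. Consequently the entire degree $\chi_f$ is carried by $\mathcal E_{n-1}$, of rank $r_{n-1}\le g-\rank\mathcal U\le g-1$, strictly below the fibre genus, whereas the degree estimates above still involve the full $g$ (through the $\tfrac g2$ terms and through $d_{n+1}=2g-2$). This rank deficit injects an additional surplus of order $g$ into the same Xiao estimate, which lowers the genus threshold from $16$ all the way down to $g\ge 3$. The finitely many values $g\in\{3,4,5\}$ lie outside the range of \autoref{lemma-lu-zuo-1} and must be treated by hand: a non-hyperelliptic non-bielliptic double cover of such a genus is necessarily an \'etale or lightly branched cover of a curve of genus $\gamma\ge 2$, and a direct check---using that $\mathcal U\ne 0$ pins down the admissible $r_i$ and $\mu_i$---shows this is incompatible with slope $4$.

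The main obstacle is the quantitative bookkeeping in part (1): one must organize the H--N pieces so that the surpluses coming from the improved degree bounds (for $l>1$), or from the Castelnuovo-forced large degree (for $l=1$), provably exceed the single top-slope boundary term that the slope-$4$ identity permits. A convenient device is to apply the sharp form \eqref{eqn-xiao} to a two- or three-element index set chosen around the birationality threshold $l$, rather than the fully telescoped inequality \eqref{eqn-xiao-1}, since the latter loses a term proportional to $\mu_1$. The secondary difficulty is the explicit low-genus analysis needed to close part (2) for $g\in\{3,4,5\}$.
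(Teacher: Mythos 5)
There is a genuine gap, concentrated in part (2). For part (1) the paper simply invokes \cite[Theorem\,1.5]{lu-zuo-18}: a non-hyperelliptic fibration of genus $g\geq 16$ over an elliptic base with $K_f^2=4\chi_f$ is bielliptic. Your sketch is in effect an attempt to re-derive that theorem. The $l=1$ branch of your argument can indeed be made to work (from \autoref{key-lemma-lu-zuo} with $l=1$ one gets $\mu_1\geq \chi_f/6$, and then Xiao's inequality with the single index $\{1\}$ gives $\omega_f^2\geq (d_1+2g-2)\mu_1>4\chi_f$ once $g\geq 16$), but the $l>1$ branch --- ``whose surplus, weighted by its slope gap, is too large to be absorbed'' --- is exactly the content of the cited theorem and is nowhere carried out; the threshold $16$ emerges from a careful case analysis of which indices are non-birational and how the gaps $\mu_i-\mu_{i+1}$ distribute, which you acknowledge but do not do. That makes part (1) an incomplete sketch rather than a proof, though the approach is the right one (it is how the cited result is proved).

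Part (2) is where the approach itself fails. Your claim that the rank deficit $r_{n-1}\leq g-1$ ``injects an additional surplus of order $g$'' into Xiao's estimate is unsubstantiated: with $\mu_n=0$ the $n$-th term of \eqref{eqn-xiao-1} contributes nothing, and losing one rank in $\mathcal{E}_{n-1}$ gives no term of order $g$. No numerical bookkeeping of this kind can bring the threshold from $16$ down to $3$, since for small $g$ the slope inequality leaves essentially no room between $4(g-1)/g$ and $4$. Your fallback for $g\in\{3,4,5\}$ then presupposes that $f$ is a double cover of a fibration of genus $\gamma\geq 2$ --- but the existence of a degree-two map is precisely what must be proved, so the argument is circular. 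What you are missing is the structural input the paper uses: since $\pi_1(E)$ is abelian, $\mathcal{U}$ is a direct sum of torsion line bundles, and $\rank\mathcal{U}=1$ by \cite{barja-zucconi-01}; an \'etale cyclic base change $\tilde{E}\to E$ of order equal to the torsion order kills $\mathcal{U}$ and produces $\tilde{S}$ with $q(\tilde{S})=2$ and $K_{\tilde{S}}^2=4\chi(\mathcal{O}_{\tilde{S}})$, i.e.\ a surface on the Severi line, whose Albanese map is a double cover of an abelian surface by \cite{barja-pardini-stoppino-16,lu-zuo-19-1}; this double cover structure is then descended to $f$ by comparing the maps $\iota_{n-1}$ defined by $\Lambda(\mathcal{A})|_F$ upstairs and downstairs. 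Without the torsion/base-change mechanism and the Severi-line characterization, the hypothesis $\rank\mathcal{U}>0$ is not exploited in any essential way, and the conclusion for $g\geq 3$ is out of reach.
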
	
	\begin{proof}
		The first case follows from \cite{lu-zuo-18};
		indeed, since $f$ is assumed to non-hyperelliptic and $K_f^2=K_S^2 = 4\chi(\mathcal{O}_S)=4\chi_f$, it follows that $f$ must be bielliptic once $g\geq 16$ by \cite[Theorem\,1.5]{lu-zuo-18}.
		
		It remains to prove the statement under the second assumption.
		We first claim that $\rank \mathcal{U}=1$. Note that  $\mathcal{U}$ is a flat unitary vector bundle over an elliptic curve $E$.
		since $\pi_1(E)$  is abelian,  every unitary
		representation splits as a direct sum of 1-dimensional ones. Hence $\mathcal{U}$
		is a direct sum of torsion line bundles.
		Since $g(E)=q(S)=1$, $K_f^2=K_S^2 = 4\chi(\mathcal{O}_S)=4\chi_f$.
		It follows immediately that $\rank \mathcal{U} \leq 1$ by \cite{barja-zucconi-01}.
		
		Since $\rank \mathcal{U}=1$ and $E$ is an elliptic curve, it follows that $\mathcal{U}$ is a torsion line bundle,
		i.e., there exists a minimal integer $m$  such that $\mathcal{U}^{\otimes m}\cong \mathcal{O}_C$.
		Take an unramified  cyclic covering  $\phi:\, \tilde E\rightarrow E$  of order $m$ such that $\phi^*\mathcal{U}\cong \mathcal{O}_{\tilde E}$,
		and take   a base change of $f:\,S \to E$, then we get the following commutative diagram
			$$\xymatrix{\tilde{S} \ar[rr]^-{\tilde{\phi}} \ar[d]_-{\tilde{f}} && S \ar[d]^-{f}\\
				\tilde E \ar[rr]^-{\phi} && E}$$
			Here $\tilde{\phi}:\,\tilde{S}\rightarrow S$ is an unramified covering, so we have $K_{\tilde{S}}^2=m K_S^2$ and $\chi(\mathcal{O}_{\tilde{S}})=m\chi(\mathcal{O}_S)$. In particular
			we have $K_{\tilde{S}}^2=4\chi(\mathcal{O}_{\tilde{S}})$.
			On the other hand, note that  $\tilde{f}:\, \tilde{S}\rightarrow \tilde{E}$ is a fibration with general fiber  the same as $f:\, S\rightarrow E$.
			Put $\tilde{\mathcal{A}}:=\tilde{\phi}^*\mathcal{A}$,
			we have $\tilde{f}_*\omega_{\tilde{S}/\tilde{E}}=\tilde{f}_*\omega_{\tilde{S}}=\tilde{A}\oplus \mathcal{O}_{\tilde E}$, and thus  we get $q(S)=2$. Hence  $\tilde{S}$ is a minimal surface with maximal Albanese dimension and  $K_{\tilde{S}}^2=4\chi(\mathcal{O}_{\tilde{S}})$. By \cite{barja-pardini-stoppino-16} or \cite{lu-zuo-19-1}, we know the Albanese map
			$a_{\tilde{S}}:\,\tilde{S}\rightarrow A=Alb(\tilde{S})$ of $\tilde{S}$ is a double cover, whose branch divisor has at most negligible singularities.
			By the universal property of Albanese map, we see $\tilde{f}$ factors through $a_{\tilde{S}}$. In particular, $\tilde{f}$ is a double cover fibration
			$$
			\xymatrix{
				\wt S \ar[dr]^-{\tilde f}\ar[rr]^-{ a_{\wt S}}&&\wt A \ar[dl]_-{\tilde h}\\
				&\tilde E&
			}$$

			Let $\wt F$ be a general fiber of $\wt f$ and $\hat{E}:=a_{\wt S}(\wt F)$ be a general fiber of $\wt h$. Now we consider the H-N filtration of $\wt {\mathcal{E}}:=\wt f_*\omega_{\wt S/{\wt E}}$ in \autoref{sec-xiao-method}.
			Note $\wt {\mathcal{E}}_{n-1}=\wt {\mathcal{A}}$, $\tilde{r}_{n-1}:=\rank \wt {\mathcal{A}}=g-1$.
			\begin{claim}\label{claim-iota-n-1}
				The map $$\wt {\iota}_{n-1}: \wt F\rightarrow \wt \Gamma\subset  \mathbb{P}^{g-2}$$
				defined by $\Lambda(\wt {\mathcal{A}})|_{\wt F}$ in \autoref{sec-xiao-method}
				factors through $a_{\wt S}|_{\wt F}: \wt F\rightarrow \hat{E}$, where $a_{\wt S}|_{\wt F}$
				is the restriction of    $a_{\wt S}: \tilde{S}\rightarrow A$ to $\wt F$.
			\end{claim}
			\begin{proof}[{Proof of the \autoref{claim-iota-n-1}}]
				Since the canonical model of $\wt S$ is a flat double cover of the Abelian surface $A$,
				there exists an  ample line bundle $L$ on $A$ such that
				$$(a_{\wt S})_*\omega_{\wt S/{\wt E}}=\mathcal{O}_A\oplus \mathcal{O}_A(L),  \quad
				(a_{\wt S}|_{\wt F})_*\omega_{\wt F}=\mathcal{O}_{\hat{E}}\oplus \mathcal{O}_{\hat{E}}(L|_{\hat{E}}),
				\quad \wt h_*\mathcal{O}_A(L)=\mathcal{\wt {\mathcal{A}}}.$$

				Since  $\wt {\iota}_{n-1}$ is defined by $\Lambda(\wt {\mathcal{A}})|_{\wt F}$,  it factors through $\hat{E}$, i.e.
				\[\wt {\iota}_{n-1}: \wt F\xrightarrow{a_{\wt S}|_{\wt F}} \hat{E}\xrightarrow{|\mathcal{O}_{\hat {E}}(L|_{\hat {E}})|} \wt \Gamma \subset  \mathbb{P}^{g-2}\qedhere\]
			\end{proof}
			
			Now return back to the fibration $\tilde f:\, \tilde S \to \tilde E$ obtained by unramified base change as above.
			Note that the general fiber $\wt F$ of $\wt f$ is the same as a general fiber $F$ of $f$.
			Compare the H-N filtration of $\mathcal{E}=f_*\omega_{S/E}$ and $\wt {\mathcal{E}}:=\wt f_*\omega_{\wt S/{\wt E}}$, we see that $\iota_{n-1}: F\rightarrow \Gamma \subset  \mathbb{P}^{g-2}$ is the same as $\wt {\iota}_{n-1}: \wt F\rightarrow \wt \Gamma\subset  \mathbb{P}^{g-2}$. In particular,  the map
			$\varphi_{n-1}: S\rightarrow \mathbb{P}_E(\mathcal{A})$ defined by the linear system  $\Lambda(\mathcal{A})=\Lambda(\mathcal{E}_{n-1})$ factors through $Y$  and we have  the following commutative diagram
			$$
			\xymatrix{
				S \ar[dr]^-{f}\ar[rr]^-{\pi}&& Y \ar[dl]_-{h}\ar[r]^-{} &\mathbb{P}_E(\mathcal{A}) \ar[dll]^-{p}\\
				&E&
			}$$
			Therefore  $f: S\rightarrow E$ is a bielliptic fibration.
		\end{proof}

		\begin{remark}
			Let $S$ be a  minimal surface of general type with $q(S)=1, K_S^2=4\chi(\mathcal{O}_S)$ and a hyperelliptic Albanese fibration $f:\,S \to E$. If  $\rank \mathcal{U}>0$, then we have $g\leq 3$.
			In fact, since $g(E)=1$, we see $\lambda_f=4$.
			If  $\rank \mathcal{U}>0$, by the proof of \cite[Corollary 1.6]{lu-zuo-17},
			after a suitable unramified  base change, we get an irregular fibration $\wt f: \, \wt S\rightarrow E$ with $\lambda_{\wt f}=4$ and $q_{\wt f}=\rank \mathcal{U}$.
			Then we get $g\leq 3$ by \cite[Corollary \,1.5]{lu-zuo-17}.
		\end{remark}

		\begin{corollary}\label{coro-3-1}
			Let $S$ be a minimal irregular surface of general type with $K_S^2 = 4\chi(\mathcal{O}_S)$, and $f:\,S \to E$ be its Albanese fibration whose general fiber is of genus $g$.
			Suppose that either $g\geq 16$ or $\rank \mathcal{U} > 0$, where $\mathcal{U}$ is the unitary summand in the Fujita decomposition \eqref{eqn-fujita}. Then
			\begin{equation}\label{eqn-3-3}
				g\leq 3\chi(\mathcal{O}_S)+1.
			\end{equation}
		\end{corollary}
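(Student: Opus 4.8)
The plan is to deduce this corollary as a formal consequence of the two preceding results, \autoref{prop-3-1} and \autoref{lemma-rkU-1}, after peeling off the hyperelliptic case and a trivial small-genus range. Throughout, the hypothesis $K_S^2=4\chi(\mathcal{O}_S)$ of the corollary is exactly the standing hypothesis of both of those results, so they are directly applicable once we are in the non-hyperelliptic situation.

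First I would dispose of the case in which the general fiber of $f$ is hyperelliptic. In that case \autoref{lem-reduction}(2) gives $g\leq 2\chi(\mathcal{O}_S)+2$, and since $\chi(\mathcal{O}_S)\geq 1$ we have $2\chi(\mathcal{O}_S)+2\leq 3\chi(\mathcal{O}_S)+1$, so the desired bound \eqref{eqn-3-3} already holds. Hence I may assume henceforth that $f$ is non-hyperelliptic, which is precisely the setting in which \autoref{lemma-rkU-1} is stated.

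Next, with $f$ non-hyperelliptic, I would invoke \autoref{lemma-rkU-1} to force $f$ to be a bielliptic fibration. Under the first hypothesis $g\geq 16$, case (1) of \autoref{lemma-rkU-1} applies verbatim. Under the second hypothesis $\rank \mathcal{U}>0$, case (2) of \autoref{lemma-rkU-1} requires in addition that $g\geq 3$; this extra restriction is harmless, because the general fiber of an Albanese fibration of a surface of general type has genus $g\geq 2$, so the only excluded value is $g=2$, and then $g=2<4\leq 3\chi(\mathcal{O}_S)+1$ gives the bound trivially. Thus in every remaining case $f$ is a bielliptic fibration, and \autoref{prop-3-1} yields exactly $g\leq 3\chi(\mathcal{O}_S)+1$.

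I do not expect a genuine obstacle here, since the statement is a bookkeeping combination of results already established. The only points that require care are organizational: separating off the hyperelliptic fibers so that \autoref{prop-3-1} and \autoref{lemma-rkU-1} (which both carry the running non-hyperelliptic assumption) are legitimately applicable, and verifying that the small-genus range $g\leq 2$ excluded by case (2) of \autoref{lemma-rkU-1} satisfies \eqref{eqn-3-3} trivially because $3\chi(\mathcal{O}_S)+1\geq 4$.
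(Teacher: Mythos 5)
Your proof is correct and follows essentially the same route as the paper, which simply combines \autoref{prop-3-1} and \autoref{lemma-rkU-1}; your extra steps (dispatching the hyperelliptic case via \autoref{lem-reduction}(2) and the range $g\leq 2$ trivially) only make explicit the standing non-hyperelliptic assumption the paper imposes before \autoref{prop-3-1}. No gaps.
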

		\begin{proof}
			This follows directly from \autoref{prop-3-1} and \autoref{lemma-rkU-1}.
		\end{proof}

		\subsection{The Albanese fibration is not bielliptic}\label{sec-proof-non-bielliptic}
			In this subsection, we prove \autoref{prop-3-2}.
			It is based on Xiao's technique and the second multiplication map recalled in \autoref{sec-xiao-method}.
			Before going to the detailed proofs, we first do some reductions.
			According to \autoref{lem-reduction} and \autoref{coro-3-1}, to prove \autoref{prop-3-2}, we may assume in this subsection that
			\begin{assumption}\label{assumption}\mbox{}
				\begin{enumerate}
					\item The base curve $C=E$ is an elliptic curve, and hence $q(S)=1$;
					
					\item The Albanese fibration $f:\,S \to E$ is neither hyperelliptic nor bielliptic, and the general fiber of $f$ is of genus $g\leq 15$;
					
					\item The Hodge bundle $f_*\omega_{S/E}$ is ample, i.e., $\rank \mathcal{U}=0$, where $\mathcal{U}$ is the unitary summand in the Fujita decomposition as in \eqref{eqn-fujita};
					
					\item $K_S^2 = 4\chi(\mathcal{O}_S)$ and $\chi(\mathcal{O}_S)\leq 4$.
				\end{enumerate}
			\end{assumption}
			
			Since $g\leq 15$, the upper bound \eqref{eqn-3-2} follows immediately if $\chi(\mathcal{O}_S)\geq 5$.
			This is why we can make the assumption that $\chi(\mathcal{O}_S)\leq 4$ in the above.

			

			

			
			Recall the H-N filtration of  $\mathcal{E}=f_*\omega_{S/C}$  in \autoref{sec-xiao-method} .
			$$ E_i:=\mathcal E_i/\mathcal E_{i-1}, \quad \mu_i:=\mu(E_i)~ (1\leq i\leq n);~ \mu_{n+1}:=0, \quad
			r_i:=\rank(\mathcal E_i).$$
			For convenience, we set
			$$a_i=\rank E_i,\qquad b_i=\deg E_i.$$
			By definition, $\mu_i=\frac{b_i}{a_i}$.
			According to \autoref{assumption}, the Hodge bundle $f_*\omega_{S/E}$ is ample.
			In particular, we have $b_i=\deg E_i>0$ and   $\mu_i>0$  for $1\leq i\leq n$.
			It follows that
			\begin{equation}\label{eqn-4-1}
				\chi(\mathcal{O}_S)=\deg \mathcal{E}=\sum_{i=1}^{n} b_i\geq n.
			\end{equation}
			

			\begin{lemma}\label{lemma-n=1}
				Under \autoref{assumption},  if $n=1$, then  $g\leq 6$ .
			\end{lemma}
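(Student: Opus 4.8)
The plan is to exploit the hypothesis $n=1$ directly. The triviality of the H-N filtration of $\mathcal{E}=f_*\omega_{S/E}$ means precisely that $\mathcal{E}$ is semistable, so there is a single graded piece and the bookkeeping becomes transparent: $r_1=\rank\mathcal{E}=g$ and $b_1=\deg\mathcal{E}=\chi_f=\chi(\mathcal{O}_S)$ (using $\chi_f=\chi(\mathcal{O}_S)$, valid since $g(E)=1$), whence $\mu_1=\chi(\mathcal{O}_S)/g$. The decisive observation is that in this situation the second multiplication map inequality of \autoref{key-lemma-lu-zuo} collapses to a single clean term, and comparing it with $\omega_f^2=K_S^2=4\chi(\mathcal{O}_S)$ will force $g\leq 6$.

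First I would verify that $l=1$, i.e.\ that $\iota_1$ is already birational. Since $\mathcal{E}_1=\mathcal{E}$, the restricted linear system $\Lambda(\mathcal{E}_1)\big|_F$ is exactly the complete canonical system $|K_F|$ of a general fiber $F$. By \autoref{assumption} the fiber $F$ is non-hyperelliptic, so its canonical map is birational; hence $l=1$. This allows me to apply \eqref{eqn-second-mult} with the first sum empty and the second sum consisting of a single term:
$$\omega_f^2\geq (5r_1-6)(\mu_1-\mu_2)=(5g-6)\mu_1=(5g-6)\frac{\chi(\mathcal{O}_S)}{g},$$
where $\mu_2=\mu_{n+1}=0$.

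Next I would substitute $\omega_f^2=K_f^2=K_S^2=4\chi(\mathcal{O}_S)$, which holds because $E$ is elliptic (so $K_f=K_S$) and $K_S^2=4\chi(\mathcal{O}_S)$ by hypothesis. Dividing through by the positive quantity $\chi(\mathcal{O}_S)/g$ then yields $4g\geq 5g-6$, that is $g\leq 6$, as desired.

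The argument reduces to essentially a one-line computation once the hypotheses of \autoref{key-lemma-lu-zuo} are in place, so I do not anticipate a genuine obstacle; the only points that require care are the identities $r_1=g$ and $\mu_1=\chi(\mathcal{O}_S)/g$, the identification $\Lambda(\mathcal{E}_1)\big|_F=|K_F|$, and the confirmation that non-hyperellipticity of $F$ (guaranteed by \autoref{assumption}) makes $\iota_1$ birational so that $l=1$. It is worth emphasizing that the plain slope inequality \eqref{eqn-slope} is of no use here, since at slope $4$ it becomes vacuous; the strict improvement supplied by the second multiplication map is exactly what produces the bound.
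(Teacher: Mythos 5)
Your proposal is correct and follows exactly the paper's own argument: identify $r_1=g$, $\mu_1=\chi(\mathcal{O}_S)/g$, note that non-hyperellipticity makes $\iota_1$ (the canonical map of $F$) birational so that \autoref{key-lemma-lu-zuo} gives $\omega_f^2\geq(5g-6)\mu_1$, and compare with $\omega_f^2=4\chi(\mathcal{O}_S)$ to get $g\leq 6$. The only difference is that you spell out the verification of $l=1$, which the paper leaves implicit.
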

			\begin{proof}
				If $n=1$, we have $r_1=g, \mu_1=\frac{\chi(\mathcal{O}_S)}{g}$, then by \autoref{key-lemma-lu-zuo}, we have
				$$4\chi(\mathcal{O}_S)=\omega_f^2\geq (5g-6)\cdot \frac{\chi(\mathcal{O}_S)}{g}.$$
				Hence we get $4\geq 5-\frac{6}{g}$, thus $g\leq 6$.
			\end{proof}
			
			According to \eqref{eqn-4-1} together with \autoref{lemma-n=1},
			in order to prove \autoref{prop-3-2}, it remains to consider the cases when $\chi(\mathcal{O}_S)=4$, $3$, or $2$.
			Note also that $\omega_f^2=K_S^2=4\chi(\mathcal{O}_S)$ by assumption.
			The proofs of \autoref{prop-3-2} in the above three cases will be completed in
			\autoref{sec-chi-4},\autoref{sec-chi-3} and \autoref{sec-chi-2} respectively.
			
			%
			%
			%
			%
			%
				%
				%
				
				\subsubsection{The case when $\chi(\mathcal{O}_S)=4$ and $\omega_S^2=16$}\label{sec-chi-4}
				In this case, $n\leq 4$ by \eqref{eqn-4-1}.
				\autoref{prop-3-2} follows from \autoref{lemma-n=1} if $n=1$.
				It remains to consider the subcases if $n=2$, $n=3$ or $n=4$.
				
				\vspace{1ex}
				{\noindent \bf Case I: $n=2$.}~
				In this case, $\mu_1=\frac{b_1}{a_1}>\mu_2=\frac{b_2}{a_2}$.
				Note also that $b_1\geq 1,b_2\geq 1$ and  $b_1+b_2=4$.
				Hence there are $3$ possible choices for $(b_1,b_2)$:
				
				(case I-1): $b_1=3,b_2=1$; (case I-2): $b_1=b_2=2$; (case I-3): $b_1=1,b_2=3$.
				
				We will prove \autoref{prop-3-2} by contradiction.
				Suppose \eqref{eqn-3-2} does not hold,
				i.e., $g\geq 3\chi(\mathcal{O}_S)+2=14$.
				Note also that $g\leq 15$ by \autoref{assumption}, and that $\omega_f^2=K_S^2$ since $g(E)=1$.
				Combining with \eqref{eqn-xiao}, we have
				\begin{equation}\label{eqn-4-1-1}
					\mu_1+\mu_2=\frac{b_1}{a_1}+\frac{b_2}{a_2}<\frac{\omega_f^2}{2g-2}=\frac{8}{g-1}\leq \frac{8}{13}.
				\end{equation}
				
				Let $\iota_1$ be the map defined in \eqref{eqn-def-iota_i}.
				If the map $\iota_1$ is birational, by \autoref{key-lemma-lu-zuo} we have
				\begin{equation*}
					16=\omega_f^2\geq (5r_1-6)(\mu_1-\mu_2)+(5r_2-6)\mu_2=(5r_1-6)\mu_1+5(r_2-r_1)\mu_2=20-\frac{6b_1}{a_1}.
				\end{equation*}
				It follows that $\frac{b_1}{a_1}\geq \frac{2}{3}$,
				which gives a contradiction to \eqref{eqn-4-1-1}.

				
				
				
				
				If $\iota_1$ is not birational, by \autoref{key-lemma-lu-zuo}, we have
				\begin{equation}\label{eqn-4-1-3}
					16=\omega_f^2\geq (3r_1-2)(\mu_1-\mu_2)+(5r_1-6)\mu_2
					=3b_1+5b_2-\frac{2b_1}{a_1}+\frac{2b_2(a_1-2)}{a_2}.
				\end{equation}

				If moreover $\frac{g}{2}\geq a_1-1$,  we have $d_1\geq 3(a_1-1)$ by \autoref{lemma-lu-zuo-1}.
				Then by \autoref{lemma-xiao}, we have
				\begin{equation}\label{eqn-4-1-2}
					\begin{aligned}
						16=\omega_f^2 &\,\geq (d_1+d_2)(\mu_1-\mu_2)+(d_2+d_3)\mu_2\\
						&\,\geq  (3a_1+2g-2)\frac{b_1}{a_1}+(2g-2-3(a_1-1))\frac{b_2}{a_2}\\
						&\,=5b_1+2b_2+\frac{b_1(2a_2-5)}{a_1}-\frac{b_2(a_1-1)}{a_2}.
					\end{aligned}
				\end{equation}

				One can then prove case-by-case that it is impossible by combining \eqref{eqn-4-1-2} and \eqref{eqn-4-1-3} together with \eqref{eqn-4-1-1}.
				As an illustration, we exclude (case I-1) for instance.
				As $b_1=3$, $b_2=1$ and $14 \leq g=a_1+a_2 \leq 15$. By \eqref{eqn-4-1-1},  we have  $6\leq a_1\leq 11$,   $4\leq a_2\leq 9$.
				By \eqref{eqn-4-1-3}, we get
				$$a_1(a_1-2)\leq a_2(a_1+3).$$
				Hence we get $5\leq a_1\leq 9$ and if $a_1=9$, then $a_2=6$, $g=15$. In particular, we have
				$d_1\geq 3(a_1-1)$ by \autoref{lemma-lu-zuo-1}.
				Now by \eqref{eqn-4-1-2}, we have
				$$\frac{a_1-1}{a_2}\geq 1+\frac{3(2a_2-5)}{a_1},$$
				which gives a contradiction if $5\leq a_1\leq 9$.
				
				%
				%
				
				\vspace{1ex}
				{\noindent \bf Case II: $n=3$.}~
				In this case,  $\mu_1=\frac{b_1}{a_1}>\mu_2=\frac{b_2}{a_2}>\mu_3=\frac{b_3}{a_3}$. Note also that  $b_i\geq 1$ and  $b_1+b_2+b_3=4$, hence there are  3 possible choices for $(b_1,b_2,b_3)$:
				
				(case II-1): $b_1=2,b_2=b_3=1$; (case II-2): $b_2=2,b_1=b_3=1$; case (II-3): $b_3=2,b_1=b_2=1$.
				
				We will prove \autoref{prop-3-2} by contradiction.
				Suppose \eqref{eqn-3-2} does not hold.
				We can assume that $14\leq g\leq 15$ by \autoref{assumption}. Note $\omega_f^2=K_S^2=16$ since $g(E)=1$.
				Combined  with \eqref{eqn-xiao}, we get
				
				\begin{equation}\label{eqn-4-1-4}
					\mu_1+\mu_3=\frac{b_1}{a_1}+\frac{b_3}{a_3}<\frac{8}{g-1}\leq \frac{8}{13}.
				\end{equation}
				Note  $14\leq a_1+a_2+a_3\leq 15$ and  $a_1 <a_2<\frac{a_3}{2}$, by \eqref{eqn-4-1-4} we can exclude (case II-3); while in (case II-1) we get  $5\leq a_1\leq 6$, $3\leq a_2\leq 4$, $5\leq a_3\leq 7$ and in (case II-2) we get $a_1=3,~ a_2=7$, $4\leq a_3\leq 5$.
				
				Now we study (case II-1) and (case II-2).
				Let $\iota_2$ be the map defined in \eqref{eqn-def-iota_i}. Note $r_1= a_1\geq 2$, thus $5r_1-6\geq 3r_1-2$.
				If the map $\iota_2$ is birational,  by \autoref{key-lemma-lu-zuo} we have
				\begin{equation*}
					16\geq (3r_1-2)(\mu_1-\mu_2)+(5r_2-6)(\mu_2-\mu_3)+(5r_3-6)\mu_3=3b_1+5b_2+5b_3-\frac{2b_1}{a_1}+\frac{2b_2(a_1-2)}{a_2}.
				\end{equation*}
				Combined with  $\frac{b_1}{a_1}>\frac{b_2}{a_2}>\frac{b_3}{a_3}$, we get a contradiction to \eqref{eqn-4-1-4}.
				
				If $\iota_2$ is not birational, by \autoref{key-lemma-lu-zuo}, we have
				\begin{equation}\label{eqn-4-1-5}
					16=\omega_f^2\geq (3r_1-2)(\mu_1-\mu_2)+(3r_2-2)(\mu_2-\mu_3)+(5r_3-6)\mu_3=14-\frac{2b_1}{a_1}+\frac{2(a_1+a_2)-4}{a_3}.
				\end{equation}
				By \eqref{eqn-4-1-5}, we can exclude (case II-2) and the subcase $a_1=6$ in  (case II-1).

				For the subcase $a_1=5$ in (case II-1),   we have either $a_2=3,~ 6\leq a_3\leq 7$ or $a_2=4, ~5\leq a_3\leq 6$. Note $\frac{g}{2}\geq 7= r_2-1\geq r_1-1$.
				By \autoref{lemma-lu-zuo-1}, we have $d_1\geq 3(a_1-1)$ and $d_2\geq 3(a_1+a_2-1)$.
				Then by \autoref{lemma-xiao}, we have
				$$\begin{aligned}
					16=\omega_f^2 &\,\geq (d_1+d_2)(\mu_1-\mu_2)+(d_2+d_3)(\mu_2-\mu_3)+(d_3+d_4)\mu_3\\
					&\,\geq  (6a_1+3a_2-6)\frac{b_1}{a_1}+(2g-3a_1+1)\frac{b_2}{a_2}+(2g-3a_1-3a_2+1)\frac{b_3}{a_3}\\
					&\,=6b_1+2b_2+2b_3+\frac{b_1(2a_2-6)}{a_1}+\frac{b_2(2a_3-a_1+1)}{a_2}-\frac{b_3(a_1+a_2-1)}{a_3}.
				\end{aligned}$$
				Hence we get
				\begin{equation}\label{eqn-4-1-6}
					\frac{a_2+4}{a_3}\geq \frac{4a_2-12}{5}+\frac{2a_3-4}{a_2},
				\end{equation}
				which gives a contradiction.

				{\noindent \bf Case III: $n=4$.}~
				In this case, $b_i=1$ for $1\leq i\leq 4$,
				$\mu_1=\frac{1}{a_1}>\mu_2=\frac{1}{a_2}>\mu_3=\frac{1}{a_3}>\mu_4=\frac{1}{a_4}$,
				thus    $a_1<a_2<a_3<a_4$.
				
				We will prove \autoref{prop-3-2} by contradiction.
				Suppose \eqref{eqn-3-2} does not hold. We can assume that $14\leq a_1+a_2+a_3+a_4\leq 15$ by \autoref{assumption}.      Then we get  $a_1=2, a_2=3, a_3=4$ and $5\leq a_4\leq 6$.
				By  \eqref{eqn-xiao},   we have
				$$\frac{8}{12}=\frac{1}{2}+\frac{1}{6}\leq \mu_1+\mu_4<\frac{8}{g-1}\leq \frac{8}{13},$$
				which gives a contradiction.

				\subsubsection{The case when $\chi(\mathcal{O}_S)=3$ and $\omega_S^2=12$}\label{sec-chi-3}
				In this case, $n\leq 3$ by \eqref{eqn-4-1}.
				\autoref{prop-3-2} follows from \autoref{lemma-n=1} if $n=1$.
				It remains to consider the subcases if $n=2$ or $n=3$.

				\vspace{1ex}
				{\noindent \bf Case I: $n=2$.}~
				In this case,
				$\mu_1=\frac{b_1}{a_1}>\mu_2=\frac{b_2}{a_2}$. Note $b_1\geq 1,b_2\geq 1$ and  $b_1+b_2=3$, so we have 2 possible choices for $(b_1,b_2)$:
				
				(case I-1): $b_1=2,b_2=1$; (case I-2): $b_1=1, b_2=2$.
				
				We will prove \autoref{prop-3-2} by contradiction.
				Suppose \eqref{eqn-3-2} does not hold,
				i.e., $g\geq 3\chi(\mathcal{O}_S)+2=11$.
				Note also that $g\leq 15$ by \autoref{assumption}, and that $\omega_f^2=K_S^2=12$ since $g(E)=1$.
				Combining with \eqref{eqn-xiao}, we have
				\begin{equation}\label{eqn-4-2-1}
					\mu_1+\mu_2=\frac{b_1}{a_1}+\frac{b_2}{a_2}<\frac{\omega_f^2}{2g-2}=\frac{6}{g-1}\leq \frac{3}{5}.
				\end{equation}
				
				Let $\iota_1$ be the map defined in \eqref{eqn-def-iota_i}.
				If the map $\iota_1$ is birational, by \autoref{key-lemma-lu-zuo} we have
				\begin{equation*}
					12=\omega_f^2\geq (5r_1-6)(\mu_1-\mu_2)+(5r_2-6)\mu_2=15-\frac{6b_1}{a_1}.
				\end{equation*}
				It follows that $\frac{b_1}{a_1}\geq \frac{1}{2}$. Note $11\leq a_1+a_2\leq 15$ and $\frac{b_1}{a_1}>\frac{b_2}{a_2}$, combined with \eqref{eqn-4-2-1}
				we get  a contradiction.
				
				If $\iota_1$ is not birational, by \autoref{key-lemma-lu-zuo}, we have
				\begin{equation}\label{eqn-4-2-3}
					12=\omega_f^2\geq (3r_1-2)(\mu_1-\mu_2)+(5r_1-6)\mu_2
					=3b_1+5b_2-\frac{2b_1}{a_1}+\frac{2b_2(a_1-2)}{a_2}.
				\end{equation}

				If moreover $\frac{g}{2}\geq a_1-1$,
				then we have $d_1\geq 3(r_1-2)=3(a_1-1)$  by \autoref{lemma-lu-zuo-1}.
				Then by \autoref{lemma-xiao}, we have
				\begin{equation}\label{eqn-4-2-2}
					\begin{aligned}
						12=\omega_f^2 &\,\geq (d_1+d_2)(\mu_1-\mu_2)+(d_2+d_3)\mu_2\\
						&\,\geq  \big(3a_1+2g-2\big)\frac{b_1}{a_1}+\big(2g-2-3(a_1-1)\big)\frac{b_2}{a_2}\\
						&\,=5b_1+2b_2+\frac{b_1(2a_2-5)}{a_1}-\frac{b_2(a_1-1)}{a_2}.
					\end{aligned}
				\end{equation}	
				
				One can then prove case-by-case that it is impossible by combining \eqref{eqn-4-2-1} and \eqref{eqn-4-2-2} together with \eqref{eqn-4-2-3}.
				As an illustration, we exclude (case I-1) for instance.
				As $b_1=2$, $b_2=1$, $\frac{a_1}{2}<a_2$ and $11 \leq g=a_1+a_2 \leq 15$. By \eqref{eqn-4-2-1},  we have  $5\leq a_1\leq 9$, $4\leq a_2\leq 10$.
				
				By \eqref{eqn-4-2-3}, we get
				$$2a_1(a_1-2)\leq a_2(a_1+4).$$
				Hence we get $5\leq a_1\leq 7$ and if $a_1=7$, then $7\leq a_2\leq 8$. In particular, we have
				$\frac{g}{2}\geq a_1-1$ since $g\geq 11$.
				Now by \eqref{eqn-4-2-2}, we have
				$$a_1(a_1-1)\geq 2a_2(2a_2-5),$$
				which gives a contradiction if $5\leq a_1\leq 7$.
				
				\vspace{1ex}
				{\noindent \bf Case II: $n=3$.}~
				In this case, $b_1=b_2=b_3=1$, $\mu_1=\frac{1}{a_1}>\mu_2=\frac{1}{a_2}>\mu_3=\frac{1}{a_3}$, thus  $a_1<a_2<a_3$.
				
				We will prove \autoref{prop-3-2} by contradiction.
				Suppose \eqref{eqn-3-2} does not hold.   We can assume that $11\leq a_1+a_2+a_3+a_4\leq 15$ by \autoref{assumption}. Hence we get  $a_1\leq 4$.
				By  \eqref{eqn-xiao},   we have
				$$\mu_1+\mu_3=\frac{1}{a_1}+\frac{1}{a_3}<\frac{3}{g-1}\leq \frac{3}{5}.$$
				Combined with  $11\leq a_1+a_2+a_3\leq 15$,  we  get $3\leq a_1\leq 4$.

				Let $\iota_2$ be the map defined in \eqref{eqn-def-iota_i}. Note $r_1= a_1\geq 2$, thus $5r_1-6\geq 3r_1-2$.
				If the map $\iota_2$ is birational,  by \autoref{key-lemma-lu-zuo} we have
				\begin{equation*}
					12\geq (3r_1-2)(\mu_1-\mu_2)+(5r_2-6)(\mu_2-\mu_3)+(5r_3-6)\mu_3=13-\frac{2}{a_1}+\frac{2a_1-4}{a_2}.
				\end{equation*}
				Hence we get $\frac{2}{a_1}\geq 1+\frac{2a_1-4}{a_2}\geq 1$, contradicting to $3\leq a_1\leq 4$.
				
				If $\iota_2$ is not birational, by \autoref{key-lemma-lu-zuo}, we have
				\begin{equation}\label{eqn-4-2-4}
					12\geq (3r_1-2)(\mu_1-\mu_2)+(3r_2-2)(\mu_2-\mu_3)+(5r_3-6)\mu_3=11-\frac{2}{a_1}+\frac{2(a_1+a_2)-4}{a_3}.
				\end{equation}
				Note $a_1<a_2<a_3$ and $11\leq a_1+a_2+a_3+a_4\leq 15$,
				by \eqref{eqn-4-2-4}, we get $a_1=3,a_2=4,a_3\geq 6$.
				Since $\iota_1,\iota_2$ are not birational,  by \autoref{lemma-lu-zuo-1}
				we have
				$$d_1\geq 3(r_1-1)=6, \quad d_2\geq \min\{3(r_2-1), 2(r_2-1)+\frac{g}{2}\}\geq \frac{35}{2}.$$
				Since $d_2$ is an integer, we get $d_2\geq 18$.  Then by
				\autoref{lemma-xiao}, we have
				\begin{equation*}
					12\geq (d_1+d_2)(\mu_1-\mu_2)+(d_2+d_3)(\mu_2-\mu_3)+(d_3+d_4)\mu_3=6+\frac{g}{2}+\frac{2g-20}{a_3}
				\end{equation*}
				Hence we get $\frac{g}{2}+\frac{2g-20}{a_3}\leq 6$, which is a contradiction because of $g=7+a_3\geq 13$ here.
				
				\subsubsection{The case when $\chi(\mathcal{O}_S)=2$ and $\omega_f^2=8$}\label{sec-chi-2}
				In this case, $n\leq 2$ by \eqref{eqn-4-1}.
				\autoref{prop-3-2} follows from \autoref{lemma-n=1} if $n=1$.
				It remains to consider the subcase if $n=2$.
				In this case, $b_1=b_2=1$ and $\mu_1=\frac{1}{a_1}>\mu_2=\frac{1}{a_2}$, thus   $a_1<a_2$.
				
				We will prove \autoref{prop-3-2} by contradiction.
				Suppose \eqref{eqn-3-2} does not hold. We can assume that  $8\leq a_1+a_2+a_3+a_4\leq 15$ by \autoref{assumption}. Hence we get  $a_1\leq 4$.
				By  \eqref{eqn-xiao},   we have
				\begin{equation}\label{eqn-4-3-1}
					\mu_1+\mu_2=\frac{1}{a_1}+\frac{1}{a_2}<\frac{4}{g-1}\leq \frac{4}{7}.
				\end{equation}
				Combined with   $a_1+a_2\leq 15$, we get $a_1\geq 3$.
				
				Let $\iota_1$ be the map defined in \eqref{eqn-def-iota_i}. Note $r_1= a_1\geq 2$, thus $5r_1-6\geq 3r_1-2$.
				If the map $\iota_1$ is birational,  by \autoref{key-lemma-lu-zuo} we have
				\begin{equation*}
					8\geq (5r_1-6)(\mu_1-\mu_2)+(5r_2-6)\mu_2 =10-\frac{6}{a_1}.
				\end{equation*}
				Hence we get $\frac{6}{a_1}\geq 2$ and thus $a_1\leq 3$.
				
				Now assume $a_1=3$. Applying Castelnuovo's bound (\cite{lu-zuo-19} Lemma 3.3),
				we get $d_1\geq 6$. Now by \autoref{lemma-xiao} we get
				\begin{equation*}
					8\geq (d_1+d_2)(\mu_1-\mu_2)+(d_2+d_3)\mu_2\geq 4+\frac{2a_2+4}{3}-\frac{2}{a_2}
				\end{equation*}
				Hence we get $a_2(a_2-4)\leq 3$, contradicting to $a_1+a_2\geq 8$.
				
				\vspace{1ex}
				If $\iota_1$ is not birational, by \autoref{key-lemma-lu-zuo}, we have
				\begin{equation*}
					8\geq (3r_1-2)(\mu_1-\mu_2)+(5r_2-6)\mu_2 =8-\frac{2}{a_1}+\frac{2(a_1-2)}{a_2}.
				\end{equation*}
				Hence we get
				\begin{equation}\label{eqn-4-3-2}
					a_2\geq a_1(a_1-2),
				\end{equation}
				combined with \eqref{eqn-4-3-1} and  $a_1+a_2\leq 15$, we get   $3\leq a_1\leq 4$.
				
				Since $\iota_1$ is not birational, by  \autoref{lemma-lu-zuo-1},
				we get $d_1\geq 3(a_1-1)$. Then  by \autoref{lemma-xiao} we get
				\begin{equation*}
					8\geq (d_1+d_2)(\mu_1-\mu_2)+(d_2+d_3)\mu_2\geq 7+\frac{2a_2-5}{a_1}-\frac{a_1-1}{a_2}.
				\end{equation*}
				Hence we get
				$$a_2(2a_2-a_1-5)\leq a_1(a_1-1).$$
				If $a_1=3$,  we get     $a_2\leq 4$, contradicting to $a_1+a_2\geq 8$;  if $a_1=4$,   we get $a_2\leq 5$, contradicting  to \eqref{eqn-4-3-2}.
				
				\subsection{The case when $K_S^2 < 4\chi(\mathcal{O}_S)$}\label{sec-first-red}
				In this subsection, we deal with the case when $K_S^2 < 4\chi(\mathcal{O}_S)$.
				As illustrated in \autoref{sec-intro}, one can easily obtain a linear bound on the genus
				of the Albanese fibration using the slope inequality, cf. \eqref{eqn-1-2}.
				Here we want to improve this upper bound a little.
				
				\begin{proof}[{Proof of \autoref{prop-3-0}}]
					By \cite{lu-zuo-18}, one has $g\leq 15$.
					Suppose the conclusion does not hold.
					We have
					$$g\geq 3\chi(\mathcal{O}_S) +2.$$
					On the other hand, since we assume that $f$ is non-hyperellipitc by \autoref{lem-reduction},
					it follows that $\lambda_f=\frac{K_S^2}{\chi(\mathcal{O}_S)}>\frac{4(g-1)}{g}$ by \cite{kon-93}, which implies that
					$$g< \frac{4\chi(\mathcal{O}_S)}{4\chi(\mathcal{O}_S)-K_S^2}\leq 4\chi(\mathcal{O}_S),
					\quad\Longrightarrow\quad g\leq 4\chi(\mathcal{O}_S)-1.$$
					We may also assume that
					$$K^2_S=4\chi(\mathcal{O}_S)-1;$$
					otherwise, $4\chi(\mathcal{O}_S)-K_S^2\geq 2$ and hence $g< \frac{4\chi(\mathcal{O}_S)}{4\chi(\mathcal{O}_S)-K_S^2}\leq 2\chi(\mathcal{O}_S)$.
					Hence there are only two possibilities:
					\begin{enumerate}[(i)]
						\item $\chi(\mathcal{O}_S)=3, K_S^2=11$ and $g=11$;
						\item $\chi(\mathcal{O}_S)=4, K_S^2=15$ and $14\leq g\leq 15$.
					\end{enumerate}
					Moreover, by \cite{barja-zucconi-01}, we know that $f_*\omega_{S/E}$ is ample.  Then we can exclude case (i) and case (ii)
					using  a case-by-case study
					similarly as in \autoref{sec-chi-3} and  \autoref{sec-chi-4} respectively.
					
					As an illustration, we exclude case (i) here and leave the exclusion of case (ii) to the readers.
					In this case, $n\leq 3$ by \eqref{eqn-4-1}.
					\autoref{prop-3-0} follows from \autoref{lemma-n=1} if $n=1$.
					It remains to consider the subcases if $n=2$ or $n=3$.
					
					\vspace{1ex}
					{\noindent \bf Case I: $n=2$.}~
					In this case,
					$\mu_1=\frac{b_1}{a_1}>\mu_2=\frac{b_2}{a_2}$. Note $b_1\geq 1,b_2\geq 1$ and  $b_1+b_2=3$, so we have 2 possible choices for $(b_1,b_2)$:
					(case I-1): $b_1=2,b_2=1$; (case I-2): $b_1=1, b_2=2$.
					
					We will prove \autoref{prop-3-0} by contradiction.
					Suppose $\chi(\mathcal{O}_S)=3, K_S^2=11, g=11$.
					Note  $\omega_f^2=K_S^2=11$ since $g(E)=1$.
					Combined with \eqref{eqn-xiao}, we get
					\begin{equation}\label{eqn-3-2-1}
						\mu_1+\mu_2=\frac{b_1}{a_1}+\frac{b_2}{a_2}<\frac{\omega_f^2}{2g-2}=\frac{11}{20}.
					\end{equation}
					
					Let $\iota_1$ be the map defined in \eqref{eqn-def-iota_i}.
					If the map $\iota_1$ is birational, by \autoref{key-lemma-lu-zuo} we have
					\begin{equation*}
						11=\omega_f^2\geq (5r_1-6)(\mu_1-\mu_2)+(5r_2-6)\mu_2=15-\frac{6b_1}{a_1}.
					\end{equation*}
					It follows that $\frac{b_1}{a_1}\geq \frac{2}{3}$, contradicting to \eqref{eqn-3-2-1}.

					If $\iota_1$ is not birational,  by \autoref{lemma-lu-zuo-1}, we have $d_1\geq 3(r_1-2)=3(a_1-1)$.
					Then by \autoref{lemma-xiao}, similar to \eqref{eqn-4-2-2}, we have
					
					\begin{equation}\label{eqn-3-2-2}
						11+\frac{b_2(a_1-1)}{a_2}\geq 5b_1+2b_2+\frac{b_1(2a_2-5)}{a_1}.
					\end{equation}

					One can then prove case-by-case that it is impossible by combining \eqref{eqn-3-2-1} and \eqref{eqn-3-2-2}.
					
					(case I-1):
					As $b_1=2$, $b_2=1$, $\frac{a_1}{2}<a_2$ and $a_1+a_2=g=11$. By \eqref{eqn-3-2-1},  we have  either $a_1=6, a_2=5$ or  $a_1=7, a_2=4$.
					By \eqref{eqn-3-2-2}, we have
					$$\frac{a_1-1}{a_2}\geq 1+\frac{2(2a_2-5)}{a_1},$$
					which gives a contradiction if $a_1=6, a_2=5$ or  $a_1=7, a_2=4$.
					
					(case I-2): As $b_1=1$, $b_2=2$, $\frac{a_1}{2}<a_2$ and $a_1+a_2=g=11$, we see $a_1\leq 3$.   Then \eqref{eqn-3-2-1} gives a contradiction if $a_1\leq 3$.

					\vspace{1ex}
					{\noindent \bf Case II: $n=3$.}~
					In this case, $b_1=b_2=b_3=1$, $\mu_1=\frac{1}{a_1}>\mu_2=\frac{1}{a_2}>\mu_3=\frac{1}{a_3}$, thus  $a_1<a_2<a_3$.
					Suppose $\chi(\mathcal{O}_S)=3, \omega_f^2=K_S^2=11, g=11$.     Then  we get  $a_1\leq 2$ and $a_3\leq 6$. Hence we have $\frac{1}{a_1}+\frac{1}{a_3}\geq \frac{1}{2}+\frac{1}{6}=\frac{2}{3}$.
					By  \eqref{eqn-xiao},   we have
					$$\mu_1+\mu_3=\frac{1}{a_1}+\frac{1}{a_3}<\frac{\omega_f^2}{2g-2}=\frac{11}{20},$$
					which gives a contradiction.
				\end{proof}
				
				\section{Examples}\label{sec-exam} 
				In this section, we construct several examples:
				in \autoref{exam-1} we construct a sequence of examples showing that the upper bound obtained in \autoref{thm-main} is sharp;
				in \autoref{exam-2} and \autoref{exam-4-2} we construct a sequence of examples where the genera of the Albanese fibrations increase quadratically with $\chi(\mathcal{O}_S)$ (and $K_S^2$) and the quotient $\frac{K_S^2}{\chi(\mathcal{O}_S)}$ can be arbitrarily close to $4$.
				These examples together with \autoref{thm-main(1)} complete the proof of \autoref{thm-main}.
				\begin{proof}[{Proof of \autoref{thm-main}}]
					(1). The upper bound \eqref{eqn-main} is proved in \autoref{thm-main(1)}.
					The examples constructed in \autoref{exam-1} with $\chi\geq 2$ give the required Albanese fibrations reaching the upper bound in \eqref{eqn-main}.
					
					(2). Let $x$ be any fixed even integer such that $\frac{8(x+1)}{2x+1}<\lambda$, and let $k=2n$ in \autoref{exam-2}.
					Then one obtains a sequence of Albanese fibrations $f_n:\,S_n \to E_n$ over an elliptic curve, such that the genus of a general fiber is $g_n=2n^2+2xn+\frac{x^2-4}{2}$ and
					$$K_{S_n}^2=8(x+1)n+2x^2-14,\qquad \chi(\mathcal{O}_{S_n}) =(2x+1)n+\frac{x^2-4}{2}.$$
					In other words, we obtain a sequence of surfaces $S_n$ of general type with an Albanese fibration of $g_n$ increasing quadratically with $\chi(\mathcal{O}_{S_n})$,
					i.e.,
					$$g_n=\frac{2}{(2x+1)^2}\chi(\mathcal{O}_{S_n})^2+\frac{2(x^2+x+4)}{(2x+1)^2}\chi(\mathcal{O}_{S_n})+\frac{(x-2)(x-1)(x+2)(x+3)}{2(2x+1)^2}.$$
					Moreover,
					$$\frac{K_{S_n}^2}{\chi(\mathcal{O}_{S_n})}=\frac{8(x+1)n+2x^2-10}{(2x+1)n+\frac{x^2-4}{2}} ~\longrightarrow~ \frac{8(x+1)}{2x+1}<\lambda,\qquad \text{when~}n \longrightarrow +\infty.$$
					This completes the proof.
				\end{proof}
				
				\begin{example}\label{exam-1}
					For any integer $\chi\geq 1$, we construct an Albanese fibration $f:\,S \to E$ over an elliptic curve, whose general fiber is non-hyperelliptic of genus $g$ such that\vspace{1mm}
					$$\chi(\mathcal{O}_S)=\chi,\qquad K_{S}^2 = 4\chi(\mathcal{O}_{S}),\quad \text{~and~} \quad g=3 \chi(\mathcal{O}_{S})+1.$$
				\end{example}
				Let $Y$ be the bielliptic surface of type 6 in Table 1 in \autoref{sec-bielliptic}.
				From the exponential sequence one obtains the long exact sequence
				$$0=H^0(Y,\mathcal{O}_Y) \to H^1(Y,\mathcal{O}_Y^*) \to H^2(Y,\mathbb{Z}) \to H^2(Y,\mathcal{O}_Y) =0.$$
				Hence there exist (integral) divisors mapped to $(1/3)A,\,(1/3)B$ in Num($Y$),
				which we denote by $A_0,\,B_0$ respectively.
				Consider the linear system $\big|2(\chi\,A_0+B_0)\big|$.
				Since $K_Y \sim 0$, by Reader's method \cite{reider88}, one proves easily that the linear system
				$$\big|2(\chi\,A_0+B_0)\big|=\big|K_Y+\big(2(\chi\,A_0+B_0)-K_Y\big)\big|$$
				is base-point-free.
				According to Bertini's theorem, one can find a smooth divisor
				$$R_{\chi}\in \big|2(\chi\,A_0+B_0)\big|.$$
				Thus one can construct a double cover $\pi:\,S \to Y$ branched exactly over the above smooth divisor $R_{\chi}$.
				Let
				$$f:\, S \lra E,$$
				be the fibration induced by the elliptic fibration $h=h_1:\, Y \to E \cong A/G$.
				Let $g$ be the genus of a general fiber of $f$.
				According to the Riemann-Hurwitz formula (In this case, $\gamma=|G|=9$),
				$$\begin{aligned}
					2g-2&\,=\,R\cdot B=2nA_0\cdot B=2\chi\cdot \frac{\gamma}{3}=6\chi,\quad \Longrightarrow \quad g=3\chi+1,\\
					K_{S}^2&\,=\,2\Big(K_Y+\frac12R_{\chi}\Big)^2=4\chi,\\
					\chi(\mathcal{O}_{S})&\,=\,2\chi(\mathcal{O}_Y)+\frac12\Big(\frac12R_{\chi}+K_Y\Big)\cdot\frac12R_{\chi}=\chi.
				\end{aligned}
				$$
				Hence $K_{S}^2=4\chi(\mathcal{O}_{S})$ and
				$g=3\chi(\mathcal{O}_{S})+1$ as required.
				It is clear that the branch divisor $R_\chi$ is ample.
				It follows that $q(S)=q(Y)=1=g(E)$,
				which implies that $f$ is nothing but the Albanese map of $S$;
				namely, $f$ is the Albanese fibration of $S$.
				Finally, As the general fiber of $f$ is a double cover of an elliptic curve with $g\geq 4$,
				one deduces that $f$ is non-hyperelliptic by \cite[Lemma\,7]{Xiao87}.
				
				\begin{example}\label{exam-2}
					For any integers $k\geq 2$ and $x\geq 2$ with $k+x$ being even, we construct an Albanese fibration $f:\,S \to E$ over an elliptic curve, such that
					\begin{equation}\label{eqn-exam-2}
						\left\{
						\begin{aligned}
							&~g=\frac{k^2}{2}+xk+\frac{x^2-4}{2},\\
							&~K_S^2=4(x+1)k+2x^2-14,\\
							&~\chi(\mathcal{O}_S) =\Big(x+\frac12\Big)k+\frac{x^2-4}{2}.
						\end{aligned}\right.
					\end{equation}
				\end{example}
				The construction is stimulated by \cite[Example\,5.1]{ll24}.
				For any integer $m\geq 1$, let  $G_m=\mathbb{Z}/{m\mathbb{Z}}$ be the cyclic group of order $m$.
				Suppose that   $\sigma\in G_m$ acts  on $\mathbb{P}^1$
				by $\sigma (t)=\xi t$ (where $\xi$ is any $m$-th primitive unit root), and $\sigma\in G_m$ acts  on some elliptic curve $E_0$ by translation $\sigma(y)=y+p$, where $p$ is a torsion point of order $m$. Let $P:=(E_0\times \mathbb{P}^1)/{G_m}$, where $G_m$ acts on $E_0\times \mathbb{P}^1$ by the diagonal action. Then we have the following commutative diagram
				$$\xymatrix{E_0\times \mathbb{P}^1 \ar[rr]^-{\Pi}\ar[d] && P:=(E_0\times \mathbb{P}^1)/{G_m} \ar[d]^h\\
					E_0\ar[rr]^{m:1} &&  E:=E_0/G_m}
				$$
				
				Then $h:P\rightarrow E$ is a $\mathbb{P}^1$-bundle over the elliptic curve $E$,
				which admits two sections $C_0,C_\infty$ with $C_0^2=C_\infty^2=0$.
				Denote by $\Gamma$ the general fiber of $h$.
				Note also that the Picard group of $P$ is simply:
				$$\Pic(P)=\mathbb{Z}[C_0]\oplus h^*\Pic(E).$$
				\begin{lemma}\label{lem-5-2}
					Let $h:P\rightarrow E$ be the $\mathbb{P}^1$-bundle and above.
					For any $p\in P\setminus\{C_0\cup C_{\infty}\}$, let $\tau:\,\wt{P} \to P$ be the blowing-up centered at $p$, and $\mathcal{E}$ be the exceptional curve.
					Then for any integers $k\geq 2$ and $x\geq 2$ with $k+x$ being even,
					there exists a smooth divisor $\wt{R}\in \left|\tau^*\big((2g+2)C_0+2\Gamma\big)-2k\mathcal{E}\right|$ if $m\geq k+2$,
					where $g=\frac{k^2}{2}+xk+\frac{x^2-4}{2}$.
				\end{lemma}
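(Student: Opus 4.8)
The plan is to realize the target class as an adjoint bundle on $\wt{P}$ and apply Reider's theorem to obtain a base-point-free linear system, after which Bertini's theorem produces a smooth member. Write $L=(2g+2)C_0+2\Gamma$, so the class in question is $\tau^*L-2k\mathcal{E}$. Since $h:P\to E$ is a $\mathbb{P}^1$-bundle over an elliptic curve with $C_0^2=0$, one has $K_P\equiv -2C_0$ numerically, and $K_{\wt{P}}=\tau^*K_P+\mathcal{E}$. Setting $N_0:=(2g+4)C_0+2\Gamma$ and
$$N:=(\tau^*L-2k\mathcal{E})-K_{\wt{P}}\equiv\tau^*N_0-(2k+1)\mathcal{E},$$
we have $\tau^*L-2k\mathcal{E}=K_{\wt{P}}+N$ as divisor classes, so it suffices to verify that $N$ is nef, that $N^2\geq 5$, and that $N$ meets no curve in either of the two exceptional configurations of Reider's theorem \cite{reider88}.

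First I would record the numerics. Using $C_0^2=\Gamma^2=0$, $C_0\cdot\Gamma=1$ and $\mathcal{E}^2=-1$, one computes
$$N^2=4(2g+4)-(2k+1)^2=8g+16-(2k+1)^2=4(k+x)^2-(2k+1)^2=(2x-1)(4k+2x+1),$$
where the third equality uses $g=\tfrac{k^2}{2}+xk+\tfrac{x^2-4}{2}=\tfrac{(k+x)^2-4}{2}$. For $k\geq 2,\ x\geq 2$ this gives $N^2\geq 39>5$, so only nefness and the absence of Reider-exceptional curves remain.

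For nefness I would test $N$ against the relevant curves on $\wt{P}$: the exceptional curve $\mathcal{E}$ (with $N\cdot\mathcal{E}=2k+1$), the sections $C_0,C_\infty$ (with $N\cdot\tau^*C_0=N\cdot\tau^*C_\infty=2$, since $p\notin C_0\cup C_\infty$ so the strict and total transforms agree), and the strict transform $\wt{\Gamma}_p=\tau^*\Gamma-\mathcal{E}$ of the fiber through $p$ (with $N\cdot\wt{\Gamma}_p=N_0\cdot\Gamma-(2k+1)=2g-2k+3>0$). The delicate curve is the horizontal multisection through $p$: in the model $P=(E_0\times\mathbb{P}^1)/G_m$, the point $p$ lies on a smooth curve $D\cong E_0$ of class $D\equiv mC_0$ mapping $m$-to-one onto $E$, whose strict transform $\wt{D}=\tau^*D-\mathcal{E}$ satisfies $\wt{D}^2=-1$ and
$$N\cdot\wt{D}=N_0\cdot D-(2k+1)=m\,(N_0\cdot C_0)-(2k+1)=2m-2k-1.$$
This is exactly where the hypothesis enters: $m\geq k+2$ forces $N\cdot\wt{D}\geq 3$, so $\wt{D}$ is neither of the forbidden types $(N\cdot D=0,\ D^2=-1)$ nor $(N\cdot D=1,\ D^2=0)$. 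I expect the main obstacle to be the exhaustive check that \emph{no} irreducible curve realizes one of Reider's two exceptional configurations; writing an arbitrary curve as $\alpha\,\tau^*C_0+\beta\,\tau^*\Gamma-\gamma\,\mathcal{E}$ with $\alpha,\beta,\gamma\geq 0$ and using $N^2\gg 0$ together with the lower bounds on $N\cdot\mathcal{E}$ and $N\cdot\wt{D}$ should confine the analysis to the finitely many low-degree curves listed above. Once $N$ is shown to be nef with $N^2\geq 5$ and free of exceptional curves, Reider's theorem gives that $K_{\wt{P}}+N=\tau^*L-2k\mathcal{E}$ is base-point-free, and a general member $\wt{R}$ of this system is smooth by Bertini, as required.
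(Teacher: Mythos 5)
Your setup is the same as the paper's: write the target class as $K_{\widetilde{P}}+N$ with $N=\tau^*\big((2g+4)C_0+2\Gamma\big)-(2k+1)\mathcal{E}$, apply Reider's criterion, and finish with Bertini. Your computation $N^2=(2x-1)(4k+2x+1)\geq 39$ is correct, and you correctly locate the one place where $m\geq k+2$ is needed, namely the elliptic multisection $D\equiv mC_0$ through $p$ with $N\cdot\widetilde{D}=2m-2k-1$.

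However, there is a genuine gap at the decisive step. Verifying Reider's hypotheses is not a matter of testing the handful of curves you list: you must show $N\cdot\widetilde{D}\geq 2$ (or at least nefness plus exclusion of the configurations $N\cdot D=0,\ D^2=-1$ and $N\cdot D=1,\ D^2=0$) for \emph{every} irreducible curve $\widetilde{D}\equiv\tau^*(aC_0+b\Gamma)-\beta\mathcal{E}$, and there are infinitely many such classes with $b\geq 1$ and $\beta$ large for which $N\cdot\widetilde{D}=2a+(k+x)^2b-(2k+1)\beta$ could a priori be small. Your assertion that ``$N^2\gg 0$ together with the lower bounds on $N\cdot\mathcal{E}$ and $N\cdot\widetilde{D}$ should confine the analysis to the finitely many low-degree curves listed above'' is not justified and is exactly where the work lies: the Hodge index theorem alone cannot rule out the exceptional configurations, and nefness itself requires bounding the multiplicity $\beta$ of $\tau(\widetilde{D})$ at $p$ in terms of $(a,b)$. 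The paper does this via the adjunction inequality $2p_a(\widetilde{D})-2=(K_{\widetilde{P}}+\widetilde{D})\cdot\widetilde{D}\geq -2$, which for a horizontal irreducible curve gives $2(a-1)b\geq\beta(\beta-1)$, followed by the estimate $2(a-1)+(k+x)^2b\geq 2(k+x)\sqrt{\beta(\beta-1)}$ and a case split according to whether $\beta\gtrless\frac{k+x}{2x-1}$; the residual case $b=0$, $a\geq 2$ is handled by the fact (from \cite[Lemma 5.2]{ll24}) that such an irreducible curve satisfies $\beta\leq 1$ and $a\geq m$, which is the second place the hypothesis $m\geq k+2$ enters. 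Without an argument of this kind covering arbitrary $(a,b,\beta)$, the proof is incomplete.
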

				\begin{proof}
					The proof is the same as that of \cite[Lemma\,5.3]{ll24}.
					Indeed, it is enough to prove that the linear system $\left|\tau^*\big((2g+2)C_0+2\Gamma\big)-2k\mathcal{E}\right|$
					is base-point-free.
					By Reider's method \cite{reider88}, it suffices to shows that $L\cdot \wt{D} \geq 2$
					for any irreducible curve $\wt{D}\subseteq \wt P$,
					where
					$$L=\tau^*\big((2g+2)C_0+2\Gamma-K_P\big)-(2k+1)\mathcal{E}.$$
					Let $\wt{D}\subseteq \wt P$ be any irreducible curve.
					If $\wt{D}=\mathcal{E}$, or $\wt{D}$ is the strict transform of any fiber of $h:\,P\to E$,
					then one checks easily that $L\cdot \wt{D}>2.$
					Otherwise, $\wt{D}\sim_{\mathrm num} \tau^*(aC_0+b\Gamma)-\beta\mathcal{E}$ with $a>0$,
					i.e., the restriction $h|_{\wt{D}}:\, \wt{D} \to E$ is surjective.
					Thus
					\begin{equation}\label{eqn-5-1}
						0\leq 2p_a(\wt{D})-2 =(K_{\wt P}+\wt{D})\cdot \wt{D},\qquad\text{namely,}\qquad
						2(a-1)b\geq \beta(\beta-1).
					\end{equation}
					By direct computation,
					$$L\cdot \wt{D} = 2\big(a+(g+2)b\big)-(2k+1)\beta=2a+(k+x)^2b-(2k+1)\beta.$$
					If $\beta\geq \frac{k+x}{2x-1}$, then
					$$\begin{aligned}
						L\cdot \wt{D} &\,= 2+2(a-1)+(k+x)^2b-(2k+1)\beta\\
						&\,\geq 2+2\sqrt{2(k+x)^2(a-1)b}-(2k+1)\beta\\
						&\,\geq 2+2(k+x)\sqrt{\beta(\beta-1)}-(2k+1)\beta\\
						&\,> 2+2(k+x)\big(\beta-\frac12\big)-(2k+1)\beta\\
						&\,=2+(2x-1)\beta-k-x\geq 2;
					\end{aligned}$$
					if $\beta\leq \frac{k+x}{2x-1}$, then
					$$\begin{aligned}
						L\cdot \wt{D} &\,= 2+2(a-1)+(k+x)^2b-(2k+1)\beta\\
						&\,\geq \left\{\begin{aligned}
							&2+2(k+x)^2-\frac{(2k+1)(k+x)}{2x-1}> 2, &\quad&\text{if~}b\geq 2;\\
							&2+\beta(\beta-1)+(k+x)^2-(2k+1)\beta> 2, && \text{if~}b=1,\\
							&2, && \text{if $b=0$ and $a=1$};\\
							&2a-(2k+1)\beta \geq 2m-(2k+1)>2, && \text{if $b=0$ and $a\geq 2$};
						\end{aligned}\right.
					\end{aligned}$$
					Here we explain a little more about the case when $b=0$:
					if $a=1$, then $\wt{D}$ must be the strict transform of $C_0$ or $C_\infty$ by \cite[Lemma\,5.2]{ll24},
					which implies that $\beta=0$ since $p\in P\setminus\{C_0\cup C_{\infty}\}$,
					and hence $L\cdot \wt{D}=2$;
					if $a\geq 2$, then $\beta\leq 1$ by \eqref{eqn-5-1}, and $a\geq m$ by \cite[Lemma\,5.2]{ll24}.
				\end{proof}
				
				Let $\wt{R}\in \left|\tau^*\big((2g+2)C_0+2\Gamma\big)-2k\mathcal{E}\right|$ be any smooth divisor on $\wt P$ as in \autoref{lem-5-2}.
				It is clear that $\wt{R}$ is two-divisible, and hence one can construct a double cover
				$$\pi:\, S \to \wt{P},$$
				such that $\pi$ branches exactly over $\wt{R}$.
				The fibration $\tilde{h}:\,\wt{P} \to E$ induces a hyperelliptic fibration $f:\,S \to E$, whose general fiber is of genus
				$$g=\frac{k^2}{2}+xk+\frac{x^2-4}{2}.$$
				Since $\wt{R}$ is smooth, one checks easily that $S$ is minimal, and
				$$\begin{aligned}
					K_S^2&\,=2\Big(K_{\widetilde{P}}+\frac{\wt{R}}{2}\Big)^2=4(g-1)-2(k-1)^2=4(x+1)k+2x^2-14,\\
					\chi(\mathcal{O}_S) &\,=2\chi(\mathcal{O}_{\wt{P}})+\frac12\Big(K_{\widetilde{P}}+\frac{\wt{R}}{2}\Big)\cdot\frac{\wt{R}}{2}=g-\frac12k(k-1)=\Big(x+\frac12\Big)k+\frac{x^2-4}{2}.
				\end{aligned}$$
				Moreover, it is easy to check that $\wt{R}$ is ample.
				Hence $q(S)=q(\wt{P})=g(E)=1$, which implies that $f$ is the Albanese fibration of $S$.
				
				
				\begin{example}\label{exam-4-2}
					The Albanese fibrations in \autoref{exam-2} are all hyperelliptic. One can also construct non-hyperelliptic Albanese fibrations as follows.
				\end{example}
				
				By construction, there is anther fibration on the surface $P=(E_0\times \bbp^1)/G_m$:
				$$h_2:\, P \lra \bbp^1 \cong \bbp^1/G_m.$$
				The general fiber of $h_2$ is an elliptic curve isomorphic to $E_0$.
				It induces also fibrations on $\wt{P}$ and $S$ with the following diagram:
				$$\xymatrix{ S \ar[drr]^-{\pi}\ar@/_10mm/"3,5"_-{f} \ar@/^9mm/"2,7"^-{f_2}\\
					&& \wt{P} \ar[rr]_-{\tau} \ar[drr]_{\tilde h} \ar@/^5mm/"2,7"^-{\tilde h_2} && P \ar[d]^-{h} \ar[rr]_-{h_2} && \bbp^1\\
					&&&& E
				}$$
				Since the branch divisor of the double cover $\pi:\,S \to \wt{P}$ is
				$$\wt{R} \equiv \tau^*\big((2g+2)C_0+2\Gamma\big)-2k\mathcal{E},$$
				it follows that $f_2$ is a fibration of genus $g_2=m+1$.
				Note that the fibration $h_2:\,P \to \bbp^1$ is isotrivial with two multiple fibers $\Gamma_0=mC_0$ and $\Gamma_{\infty}=mC_{\infty}$ of multiplicity both equal to $m$.
				Let $\phi:\,\bbp^1 \to \bbp^1$ be a base change of degree two branched over
				$p_0=h(\Gamma_0)$ and $p$,
				where $p\in \bbp^1$ is a general point such that the fiber $f_2^{-1}(p)$ is smooth.
				Let $\ol{S}$ (resp. $\ol{P}$) be the normalization of $S\times_{\bbp^1} \bbp^1$ (resp. $\wt{P} \times_{\bbp^1}\bbp^1$).
				Then we get a commutative diagram as follows.
				$$\xymatrix{\ol{S} \ar[rr]_-{\bar\pi} \ar[d]^-{\Phi} \ar@/_5mm/"3,1"_-{\bar f}
					\ar@/^5mm/"1,5"^-{\bar f_2}
					&& \overline{P} \ar[rr]_-{\bar h_2} \ar[d]^-{\varphi}	&& \bbp^1 \ar[d]^-{\phi} \\
					S \ar[rr]^-{\pi} \ar[d]^-{f} && \wt{P} \ar[rr]^-{\tilde h_2} \ar[d]^-{\tilde h} && \bbp^1 \\
					E \ar@{=}[rr] && E }$$
				When $m$ ($\geq k+2$ required by \autoref{lem-5-2}) is even, the double cover $\Phi:\,\ol{S} \to S$ is branched over exactly over $f_2^{-1}(p)$,
				since the fiber $f_2^{-1}(p_0)=\pi^{-1}(\Gamma_0)$ is of multiplicity $m$.
				Let $\bar g$ be genus of a general fiber of $\bar f$.
				It follows immediately that
				$$\begin{aligned}
					&2\bar g-2=2(2g-2)+2m,\quad\Longrightarrow\quad \bar g=2g-1+m=k^2+2xk+x^2-5+m,\\
					&K_{\ol{S}}^2=2\Big(K_S+\frac{f_2^{-1}(p)}{2}\Big)^2=2(K_S^2+2m)=8(x+1)k+4x^2+4m-28,\\
					&\chi(\mathcal{O}_{\ol{S}})=2\chi(\mathcal{O}_S)+\frac12\Big(K_S+\frac{f_2^{-1}(p)}{2}\Big)\cdot \frac{f_2^{-1}(p)}{2} =(2x+1)k+x^2+\frac{m}{2}-4.
				\end{aligned}$$
				Note that $\bar\pi:\, \ol{S} \to \ol{P}$ is also a double cover branched over $\varphi^{-1}(\wt{R})$.
				As $\wt{R}$ is ample on $\wt{P}$, the divisor $\varphi^{-1}(\wt{R})$ is also ample on $\ol{P}$.
				Note also that $\bar h_2:\,\ol{P} \to \bbp^1$ is an elliptic fibration over $\bbp^1$. Hence $q(\ol{S})=q(\ol{P})=q(\wt{P})=1$.
				This implies that $\bar f:\, \ol{S} \to E$ is the Albanese fibration of $\ol{S}$.
				Moreover, the general fiber $\ol{F}$ of $\bar f$ is a double cover of $F$ branched over $m$ points, where $F=\Phi(\ol{F})$ is a general fiber of $f$ which is of genus $g$.
				Hence $\ol{F}$ is non-hyperelliptic by the Castelnuovo–Severi inequality (cf. \cite[Exercise\,V.1.9]{har77});
				indeed, if $\ol{F}$ were hyperelliptic, then there would be a double cover $\ol{F} \to F$
				as well as a double cover $\ol{F} \to \bbp^1$, which gives a contradiction to the Castelnuovo–Severi inequality.
				
				\begin{remark}
					Let $\lambda>4$.
					Fixing an integer $x\geq 2$ with $\frac{8(2x+3)}{4x+3}<\lambda$,
					let $k\geq 2$ be an integer with $k+x$ being even,
					and let $m=k+2$ (resp. $m=k+3$) if $k$ is even (resp. odd).
					Similarly as what we do in the proof of \autoref{thm-main} at the beginning of this section,
					one can obtain by \autoref{exam-4-2} a sequence of non-hyperelliptic Albanese fibrations $\bar f_k:\, \ol{S}_k \to E_k$,
					such that the genus $\bar g_k$ of the fibration $\bar f_k$ increases quadratically with $\chi(\mathcal{O}_{\ol S_k})$ and that $$\lim\limits_{k\to \infty} \frac{K_{\ol S_k}^2}{\chi(\mathcal{O}_{\ol S_k})}=\frac{8(2x+3)}{4x+3}<\lambda.$$
				\end{remark}

				

				\section{The Albanese fibrations reaching the upper bound}\label{sec-reaching}
				In this section, we would like to study the Albanese fibrations reaching the upper bound \eqref{eqn-main}.
				In particular, we will prove \autoref{thm-main-2} and \autoref{thm-main-3}.
				
				\begin{proof}[{Proof of \autoref{thm-main-2}}]
					Most of the conclusions have been in fact already proved in \autoref{sec-proof-main}.
					To be self-contained, we will also recall the proofs here again.
					
					First, from \autoref{lem-reduction} it follows that $q(S)=g(C)=1$ (and hence the base $C=E$ is an elliptic curve) and that the general fiber of $f$ is non-hyperelliptic.
					Since $g=3\chi(\mathcal{O}_S)+1\geq 16$ by assumption, it follows that
					$$K_S^2 =K_f^2\geq 4 \chi_f=4\chi(\mathcal{O}_S).$$
					Combining it with the assumption $K_S^2\leq 4\chi(\mathcal{O}_S)$,
					one proves that $K_S^2=4\chi(\mathcal{O}_S)$.
					In other words, the slope of $f$ is $\lambda_f:=\frac{K_f^2}{\chi_f}=4$.
					Thus the conclusions\,(iv) and (vi) follows from \cite[Theorem\,1.5]{lu-zuo-18}.
					It remains to prove that (v),
					i.e., the bielliptic surface $Y$ is the surface of type 6 appearing in Table\,2.1 in \autoref{sec-bielliptic}.
					This has been already proved in the proof of \autoref{prop-3-1} in \autoref{sec-proof-bielliptic}; see also \autoref{rem-bielliptic}.
					This completes the proof.
				\end{proof}
				
				\begin{proof}[{Proof of \autoref{thm-main-3}}]
					Let $S$ be an irregular minimal surface with $\chi(\mathcal{O}_S)=\chi\geq 5$ reaching the equality in \eqref{eqn-main}.
					By \autoref{thm-main-2}, the canonical model $S_{can}$ is a flat double cover of a bielliptic surface $Y$, and the branch divisor $R$ of $\pi:\, S_{can}\rightarrow Y$ admits at most negligible singularities.
					Moreover, the bielliptic surface $Y\cong (A\times B)/G$ is of type 6 in Table 2.1 in \autoref{sec-bielliptic},
					where $A$ can be an arbitrary elliptic curve, but $B$ is unique up to isomorphism. Hence there is exactly one parameter for $Y$.
					
					Now we consider the parameters for the branch divisor $R$. Since $\gamma=|G|=9$, by \eqref{eqn-3-10} and \eqref{eqn-3-11}, we have
					\begin{equation}\label{}
						b=\frac{2}{3}, \quad  a=\frac{2\chi(\mathcal{O}_S)}{3},
						\quad R\sim \frac{2\chi(\mathcal{O}_S)}{3}A+\frac{2}{3}B.
					\end{equation}
					Since $R$ is ample and $K_Y\sim 0$, we see $h^1(\mathcal{O}_Y(R))=h^2(\mathcal{O}_Y(R))=0$. By the Riemann-Roch Theorem, we get
					
					$$h^0(\mathcal{O}_Y(R))=\chi(\mathcal{O}_Y)+\frac{R^2}{2}=4\chi(\mathcal{O}_S).$$
					On the other hand,    a general member in $|\mathcal{O}_Y(R)|$ is smooth by \autoref{exam-1}.
					Since  the parameter spaces for $Y$ and $R$ are both irreducible,
					$\mathcal{M}$ is also irreducible. The  dimension of $\mathcal{M}$ is equal to
					\[1+\dim |\mathcal{O}_Y(R)|=4\chi(\mathcal{O}_S)=4\chi.\qedhere\]
				\end{proof}
				
				Most of the conclusions in \autoref{thm-main-2} and \autoref{thm-main-3}
				can be extended to the minimal irregular surfaces whose Albanese fibrations are bielliptic.
				
				\begin{theorem}\label{thm-moduli}
					Let $S$ be a minimal surface of general type with $q(S)=1$ and $K_S^2 = 4\chi(\mathcal{O}_S)$. Suppose that its Albanese fibration $f:\,S \to E$  is a bielliptic fibration of genus $g\geq 5$.
					\begin{enumerate}[$(1)$]
						\item The canonical model $S_{can}$ is a flat double cover of a bielliptic surface $Y$  whose branch divisor $R$  admits at most negligible singularities.
						\item The genus $g$ is linearly bounded from above as follows:
						\begin{equation}\label{eqn-5-12}
							g\leq \left\{\begin{aligned}
								&\chi(\mathcal{O}_S)+1, &\quad&\text{if~} Y ~\text{is of type $1$, $3$, $5$, or $7$ in Table\,2.1 in \autoref{sec-bielliptic}};\\
								&2\chi(\mathcal{O}_S)+1, &\quad&\text{if~} Y ~\text{is of type $2$ or $4$ in Table\,2.1 in \autoref{sec-bielliptic}};\\
								&3\chi(\mathcal{O}_S)+1 &\quad&\text{if~} Y ~\text{is of type $6$ in Table\,2.1 in \autoref{sec-bielliptic}}.
							\end{aligned}\right.
						\end{equation}
						
						\item For any fixed integer $\chi\geq 2$, let $\mathcal{M}_{\chi}^i$ be the moduli space of minimal irregular surfaces $S$ of general type with $q(S)=1, \chi(\mathcal{O}_S)=\chi, K_S^2=4\chi(\mathcal{O}_S)$ whose Albanese map is a bielliptic fibration of genus $g\geq 5$ reaching the equality in \eqref{eqn-5-12} with $Y$ being of type $i$ in Table\,2.1 in \autoref{sec-bielliptic}.
						Then $\mathcal{M}_{\chi}^i$ is irreducible and
						\begin{equation}\label{eqn-5-13}
							\dim \mathcal{M}_{\chi}^i= \left\{\begin{aligned}
								&4\chi+1, &\quad&\text{if~} 1\leq i\leq 2;\\
								&4\chi, &\quad&\text{if~} 3\leq i\leq 7.
							\end{aligned}\right.
						\end{equation}
					\end{enumerate}
				\end{theorem}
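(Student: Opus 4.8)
The plan is to extend, uniformly across all seven types of Table 2.1, the three arguments already carried out for the type-6 case in \autoref{prop-3-1}, \autoref{thm-main-2} and \autoref{thm-main-3}. For part (1) I would argue exactly as in the proof of \autoref{prop-3-1}: since $f$ is a bielliptic Albanese fibration with minimal slope $\lambda_f=K_f^2/\chi_f=4$, Barja's \cite[Theorem\,2.1]{barja01} furnishes a double cover $\pi:\,S\to Y$ over $E$ with $h:\,Y\to E$ locally trivial elliptic and with branch divisor $R$ having at most negligible singularities. Local triviality forces $\chi(\mathcal{O}_Y)=0$, while $q(S)=1$ forces $q(Y)=1$, so $p_g(Y)=0$ and $K_Y\sim 0$; hence $Y$ is bielliptic. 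Passing to the canonical model makes $\pi$ a flat double cover, which is (1).

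For part (2) I would reuse the numerics of \autoref{prop-3-1}. Writing $R\sim aA+bB$, the Riemann--Hurwitz relation $2g-2=R\cdot B=a\gamma$ together with $\chi(\mathcal{O}_S)=\frac{ab\gamma}{4}$ gives $g=\frac{2\chi(\mathcal{O}_S)}{b}+1$. Expressing $R=a\alpha\cdot\frac{A}{\alpha}+b\beta\cdot\frac{B}{\beta}$ in the basis of $\mathrm{Num}(Y)$, two-divisibility of $R$ forces $2\mid b\beta$, i.e. $b\geq 2/\beta$. Reading $\beta$ off Table 2.1 ($\beta=1$ for types $1,3,5,7$; $\beta=2$ for types $2,4$; $\beta=3$ for type $6$) and substituting into $g=\frac{2\chi(\mathcal{O}_S)}{b}+1\leq \beta\,\chi(\mathcal{O}_S)+1$ yields precisely the three bounds in \eqref{eqn-5-12}.

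For part (3) the strategy is to parametrize the pairs $(Y,R)$ and add dimensions. Equality in (2) pins down $b=2/\beta$, whence $a=\frac{4\chi(\mathcal{O}_S)}{b\gamma}$ and the type-independent value $R^2=2ab\gamma=8\chi(\mathcal{O}_S)$. Since $K_Y\sim 0$ and $R$ is ample, Kodaira vanishing and Riemann--Roch give $h^0(\mathcal{O}_Y(R))=\chi(\mathcal{O}_Y)+\tfrac{R^2}{2}=4\chi(\mathcal{O}_S)$, so $\dim|R|=4\chi-1$ for every type, with a general member smooth by the Reider-type argument of \autoref{exam-1}. The only type-dependent ingredient is the number of moduli of $Y=(A\times B)/G$ itself: the factor $A$ always contributes one parameter, while $B$ is free exactly when $G$ acts on it through involutions only (types $1,2$, where the stabilizers are $\mathbb{Z}_2$) and is rigid when the action uses an automorphism of order $3$, $4$, or $6$ (types $3$--$7$, forcing $j(B)=0$ or $1728$). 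Thus $Y$ carries $2$ moduli for $i\in\{1,2\}$ and $1$ modulus for $i\in\{3,\dots,7\}$, and summing gives $\dim\mathcal{M}_\chi^i=2+(4\chi-1)=4\chi+1$ for $i\leq 2$ and $1+(4\chi-1)=4\chi$ for $i\geq 3$. Irreducibility follows since the moduli of $A$ (and of $B$ when free) and the projective space $|R|$ are each irreducible, and the double-cover construction identifies $\mathcal{M}_\chi^i$ with the quotient of this irreducible family by the finite automorphism groups of the pairs $(Y,R)$.

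The main obstacle will be part (3), specifically justifying that this naive parameter count equals the moduli dimension. One must verify that recovering $(Y,R)$ from $S$ is canonical up to finite ambiguity, so that the map from the parameter space to $\mathcal{M}_\chi^i$ is generically finite, and that the discrete data---the embedding $G\hookrightarrow A[\mathrm{tors}]$ and the choice of order-$n$ automorphism of $B$---contribute no continuous moduli. The rigidity of $B$ in types $3$--$7$ is the classical Bagnera--de Franchis input that ultimately separates the two dimension formulas in \eqref{eqn-5-13}.
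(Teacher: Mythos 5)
Your proposal follows essentially the same route as the paper: part (1) via Barja's \cite[Theorem\,2.1]{barja01} as in \autoref{prop-3-1}, part (2) via the relations $g=\frac{2\chi(\mathcal{O}_S)}{b}+1$ and $2\mid b\beta$ read against Table 2.1, and part (3) via the same count $\dim|R|=4\chi-1$ plus the $2$-versus-$1$ moduli of $Y$ coming from the rigidity of $B$ in types $3$--$7$. The only addition is your (reasonable) caveat about generic finiteness of the parametrization, which the paper's proof also leaves implicit.
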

				\begin{proof}
					(1) This is in fact contained in the proof of \autoref{prop-3-1}.
					Indeed, since $g\geq 5$ and $E$ is an elliptic curve,
					it follows that the slope of $f$ is $\lambda_f=\frac{K_S^2}{\chi(\mathcal{O}_S)}=4$.
					Hence our conclusion follows from \cite[Theorem\,2.1]{barja01}.

					(2) The proof is similar to that of \autoref{prop-3-1}.
					We use the same notations as in the proof of \autoref{prop-3-1}.
					By \eqref{eqn-3-11} and \eqref{eqn-modui-need}, we have
					$$g=\frac{2\chi(\mathcal{O}_S)}{b}+1,\qquad \text{and}\qquad b\geq \frac{2}{\beta}.$$
					Thus the upper bound follows from the description of $\beta$ in each type bielliptic surfaces in Table\,2.1 in \autoref{sec-bielliptic}.
					
					(3) First we show that $\mathcal{M}_{\chi}^i$ is nonempty: simialr to the construction in \autoref{exam-1},  take $Y$ to be of type $i$,
					$a=\frac{4\chi}{b\gamma}$, where $\gamma=|G|$ is order of the group $G$ defining the bielliptic surface as in \autoref{sec-bielliptic}, and
					$$b=\left\{\begin{aligned}
						&2, &\quad& \text{if $i=1, 3, 5$ or $7$};\\
						&1, && \text{if $i=2$ or $4$};\\
						&\frac{2}{3}, && \text{if $i=6$}.
					\end{aligned}\right.$$
					Then one can construct a minimal surface $S$ of general type with
					$q(S)=1, \chi(\mathcal{O}_S)=\chi, K_S^2=4\chi$ whose Albanese map is a bielliptic fibration of genus $g\geq 5$ and reaching the equality in \eqref{eqn-5-12}.
					
					To count the dimension of $\mathcal{M}_{\chi}^i$, we can also mimic what we do in the proof of \autoref{thm-main-3}.
					For the bielliptic surface $Y$, there are 2 parameters  if  $1\leq i\leq 2$
					and $1$ parameter if $3\leq i\leq 7$. As calculated  in the proof of \autoref{thm-main-3}, we see that  $\dim |\mathcal{O}_Y(R)|=4\chi -1$ (which is independent of the type of $Y$)
					and that a general a general member in $|\mathcal{O}_Y(R)|$ is smooth.
					Hence $\mathcal{M}_{\chi}^i$ is irreducible of dimension $4\chi+1$ if $1\leq i\leq 2$ and of dimension $4\chi$ if $3\leq i\leq 7$.
				\end{proof}
				
				\begin{remark}
					Let $S$ be a minimal  surface of general type with $q(S)=1$, $K_S^2 = 4\chi(\mathcal{O}_S)$, and $f:\,S \to E$ be its Albanese fibration whose general fiber is non-hyperelliptic of genus $g\geq 5$.
					
					(1) By \autoref{lemma-rkU-1},
					if either $g\geq 16$ or $\rank \mathcal{U}=1$, where $\mathcal{U}$ is the unitary summand in the Fujita decomposition in \eqref{eqn-fujita},
					then
					$f$ is a bielliptic fibraiton and satisfy the condition of \autoref{thm-moduli}.

					(2) Suppose that $f$ is bielliptic of genus $g\geq 5$.
					Then it always holds that $\rank\mathcal{U}=1$, where $\mathcal{U}$ is the unitary summand  in the Fujita decomposition as above.
					
					\begin{proof}
						Consider the following commutative diagram in  \autoref{lemma-rkU-1}
						$$\xymatrix{S \ar[rr]^-{\pi} \ar[d]_-{f} && Y \ar[d]^-{h}\\
							E \ar[rr]^-{=} && E}$$
						Let $\mathcal{L}$ be a line bundle on $E$ such that  $\mathcal{L}^{\otimes 2}\cong \mathcal{O}_Y(R)$ , then we have
						$$f_*\omega_f=h_*(\pi_*\omega_f)=h_*(\omega_Y\oplus \mathcal{L})=h_*(\omega_Y)\oplus h_*{\mathcal{L}}$$
						Since $\deg h_*(\omega_Y)=0$,  we have $\rank  \mathcal{U}\geq 1$.
						On the other hand, we have showed in the proof of \autoref{lemma-rkU-1} that $\rank  \mathcal{U}\leq 1$.
						Hence $\rank\mathcal{U}=1$ as required.
					\end{proof}
				\end{remark}	
	
\vspace{1cm}
{\bf\noindent Data availability~} The author declares that the manuscript has no associated data.
\vspace{8mm}

{\bf\large \noindent Declarations}
\vspace{8mm}

\noindent On behalf of all authors, the corresponding author states that there is no conflict of interest.

					%


\begin{thebibliography}{BHPV04}
					
					\bibitem[Bar01]{barja01}
					M.~A. Barja.
					\newblock {\em On the slope of bielliptic fibrations}.
					\newblock Proc. Amer. Math. Soc., 129(7):\,1899--1906, 2001.
					
					\bibitem[Bea78]{bea78}
					A.~Beauville.
					\newblock {\em Surfaces alg\'{e}briques complexes}.
					\newblock Ast\'{e}risque, No. 54. Soci\'{e}t\'{e} Math\'{e}matique de France,
					Paris, 1978.
					\newblock Avec une sommaire en anglais.
					
					\bibitem[BHPV04]{bhpv}
					W.~P. Barth, K.~Hulek, C.~A.~M. Peters, and A.~{Van de Ven}.
					\newblock {\em Compact complex surfaces}, volume~4 of {\em Ergebnisse der
						Mathematik und ihrer Grenzgebiete. 3. Folge. A Series of Modern Surveys in
						Mathematics [Results in Mathematics and Related Areas. 3rd Series. A Series
						of Modern Surveys in Mathematics]}.
					\newblock Springer-Verlag, Berlin, second edition, 2004.
					
					\bibitem[BPS16]{barja-pardini-stoppino-16}
					M.~A. Barja, R.~Pardini, and L.~Stoppino.
					\newblock {\em Surfaces on the {S}everi line}.
					\newblock J. Math. Pures Appl. (9), 105(5):\,734--743, 2016.
					
					\bibitem[BZ00]{barja-zucconi-01}
					M.~A. Barja and F.~Zucconi.
					\newblock {\em A note on a conjecture of {X}iao}.
					\newblock J. Math. Soc. Japan, 52(3):\,633--635, 2000.
					
					\bibitem[Cat00]{Cat00}
					F.~Catanese.
					\newblock {\em Fibred surfaces, varieties isogenous to a product and related
						moduli spaces}.
					\newblock Amer. J. Math., 122(1):\,1--44, 2000.
					
					\bibitem[CC91]{cc91}
					F.~Catanese and C.~Ciliberto.
					\newblock {\em Surfaces with {$p_g=q=1$}}.
					\newblock In {\em Problems in the theory of surfaces and their classification
						({C}ortona, 1988)}, Sympos. Math., XXXII, pages 49--79. Academic Press,
					London, 1991.
					
					\bibitem[CH88]{CH88}
					M.~Cornalba and J.~Harris.
					\newblock {\em Divisor classes associated to families of stable varieties, with
						applications to the moduli space of curves}.
					\newblock Ann. Sci. \'{E}cole Norm. Sup. (4), 21(3):\,455--475, 1988.
					
					\bibitem[CS08]{cornalba-stoppino-08}
					M.~Cornalba and L.~Stoppino.
					\newblock {\em A sharp bound for the slope of double cover fibrations}.
					\newblock Michigan Math. J., 56(3):\,551--561, 2008.
					
					\bibitem[Har77]{har77}
					R.~Hartshorne.
					\newblock {\em Algebraic geometry}.
					\newblock Graduate Texts in Mathematics, No. 52. Springer-Verlag, New
					York-Heidelberg, 1977.
					
					\bibitem[Ish05]{Ish05}
					H.~Ishida.
					\newblock {\em Bounds for the relative {E}uler-{P}oincar\'{e} characteristic of
						certain hyperelliptic fibrations}.
					\newblock Manuscripta Math., 118(4):\,467--483, 2005.
					
					\bibitem[JL24]{jl24}
					Z.~Jiang and H.-Y. Lin.
					\newblock {\em The Fujita decomposition and surfaces of general type with
						$p_g=q=1$}.
					\newblock preprint, 2024.
					
					\bibitem[Kon93]{kon-93}
					K.~Konno.
					\newblock {\em Nonhyperelliptic fibrations of small genus and certain irregular canonical surfaces}.
					\newblock Ann. Scuola Norm. Sup. Pisa Cl. Sci., 20(4): 575–-595, 1993.
					
					\bibitem[Lin19]{ling-19}
					S.~Ling.
					\newblock {\em Algebraic surfaces with {$p_g={\rm q}=1$}, {$K^2=4$} and genus 3
						{A}lbanese fibration}.
					\newblock Manuscripta Math., 158(3-4):\,547--571, 2019.
					
					\bibitem[LL24]{ll24}
					S.~Ling and X.~L\"u.
					\newblock {\em Upper bounds on the genus of Albanese fibrations}.
					\newblock Arxiv: 2402.14516v2, 2024.
					
					\bibitem[LZ17]{lu-zuo-17}
					X.~Lu and K.~Zuo.
					\newblock {\em On the slope of hyperelliptic fibrations with positive relative
						irregularity}.
					\newblock Trans. Amer. Math. Soc., 369(2):\,909--934, 2017.
					
					\bibitem[LZ18]{lu-zuo-18}
					X.~Lu and K.~Zuo.
					\newblock {\em On the gonality and the slope of a fibered surface}.
					\newblock Adv. Math., 324:\,336--354, 2018.
					
					\bibitem[LZ19a]{lu-zuo-19-1}
					X.~Lu and K.~Zuo.
					\newblock {\em On {S}everi type inequalities for irregular surfaces}.
					\newblock Int. Math. Res. Not. IMRN, (1):\,231--248, 2019.
					
					\bibitem[LZ19b]{lu-zuo-19}
					X.~Lu and K.~Zuo.
					\newblock {\em On the slope conjecture of {B}arja and {S}toppino for fibred
						surfaces}.
					\newblock Ann. Sc. Norm. Super. Pisa Cl. Sci. (5), 19(3):\,1025--1064, 2019.
					
					\bibitem[Pen11]{penegini11}
					M.~Penegini.
					\newblock {\em The classification of isotrivially fibred surfaces with
						{$p_g=q=2$}}.
					\newblock Collect. Math., 62(3):\,239--274, 2011.
					\newblock With an appendix by S\"{o}nke Rollenske.
					
					\bibitem[PP17]{PP17}
					M.~Penegini and F.~Polizzi.
					\newblock {\em A note on surfaces with {$p_g=q=2$} and an irrational
						fibration}.
					\newblock Adv. Geom., 17(1):\,61--73, 2017.
					
					\bibitem[Pol09]{Pol09}
					F.~Polizzi.
					\newblock {\em Standard isotrivial fibrations with {$p_g=q=1$}}.
					\newblock J. Algebra, 321:\,1600--1631, 2009.
					
					\bibitem[Rei88]{reider88}
					I.~Reider.
					\newblock {\em Vector bundles of rank {$2$} and linear systems on algebraic
						surfaces}.
					\newblock Ann. of Math. (2), 127(2):\,309--316, 1988.
					
					\bibitem[Ser90]{Ser90}
					F.~Serrano.
					\newblock {\em Divisors of bielliptic surfaces and embeddings in {${\bf
								P}^4$}}.
					\newblock Math. Z., 203(3):\,527--533, 1990.
					
					\bibitem[Xia87]{Xiao87}
					G.~Xiao.
					\newblock {\em Fibered algebraic surfaces with low slope}.
					\newblock Math. Ann., 276(3):\,449--466, 1987.
				\end{thebibliography}

			\end{document}